\theoremstyle{plain}
\newtheorem{theorem}{Theorem}[section]
\newtheorem{lemma}[theorem]{Lemma}
\newtheorem{prop}[theorem]{Proposition}
\newtheorem{corollary}[theorem]{Corollary}
\theoremstyle{definition}
\theoremstyle{remark}
\newtheorem{remark}[theorem]{Remark}
\numberwithin{equation}{section}
\newcommand{\eps}{\varepsilon}
\newcommand{\R}{\mathbb R}
\newcommand{\N}{\mathbb N}
\newcommand{\T}{\mathbb T}
\newcommand{\pot}{\phi}
\DeclareMathOperator{\diver}{div}
\title[On Double H\"older regularity of pressure in bounded domains]{On Double H\"older regularity of the hydrodynamic pressure in bounded domains
}
\author[L.~De Rosa]{Luigi De Rosa}
\address[L.~De Rosa]{Department Mathematik und Informatik, Universität Basel, Spiegelgasse~1, 4051 Basel, Switzerland}
\email{luigi.derosa@unibas.ch}
\author[M.~Latocca]{Micka\"el Latocca}
\address[M.~Latocca]{Department Mathematik und Informatik, Universität Basel, Spiegelgasse~1, 4051 Basel, Switzerland}
\email{mickael.latocca@unibas.ch}
\author[G.~Stefani]{Giorgio Stefani}
\address[G.~Stefani]{Scuola Internazionale Superiore di Studi Avanzati (SISSA), via Bonomea~265, 34136 Trieste (TS), Italy}
\email{giorgio.stefani.math@gmail.com}
\subjclass[2010]{76B03, 35D30, 35J15, 35J25.}
\date{\today}
\keywords{Incompressible fluids, hydrodynamic pressure, boundary regularity, Schauder estimates.}
\thanks{\textit{Acknowledgments}. 
The first two authors are partially supported by the 2015 ERC Grant 676675 FLIRT--Fluid Flows and Irregular Transport. 
The second author thanks Nicolas Burq, Paul Dario and Jules Pertinand for several interesting discussions he had with them.
The third author is a member of Istituto Nazionale di Alta Matematica (INdAM), Gruppo Nazionale per l'Analisi Matematica, la Probabilità e le loro Applicazioni (GNAMPA) and has received funding from the European Research Council (ERC) under the European Union’s Horizon 2020 research and innovation program (grant agreement No.~945655).
}
\thanks{\textit{Data availability statement}.
Data sharing not applicable to this article as no datasets were generated or analysed during the current study.}
\begin{document}

\begin{abstract}
We prove that the hydrodynamic pressure $p$ associated to the velocity $u\in C^\theta(\Omega)$, $\theta\in(0,1)$, of an inviscid incompressible fluid in a bounded and simply connected domain $\Omega\subset \R^d$ with $C^{2+}$ boundary satisfies $p\in C^{\theta}(\Omega)$ for $\theta \leq \frac12$ and $p\in C^{1,2\theta-1}(\Omega)$ for $\theta>\frac12$. Moreover, when $\partial \Omega\in C^{3+}$,  we prove that an almost double H\"older regularity $p\in C^{2\theta-}(\Omega)$ holds even for $\theta<\frac12$. This extends and improves the recent result of~\cite{BT21} obtained in the planar case to every dimension $d\ge2$ and it also doubles the pressure regularity.
Differently from~\cite{BT21}, we do not introduce a new boundary condition for the pressure, but instead work with the natural one.
In the boundary-free case of the $d$-dimensional torus, we show that the double regularity of the pressure can be actually achieved under the weaker assumption that the divergence of the velocity is sufficiently regular, thus not necessarily zero.
\end{abstract}

\maketitle

\section{Introduction}
\label{sec:intro}
Let $d\geq 2$ and let $\Omega\subset\R^d$ be a bounded and simply connected domain of class $C^2$.  The time evolution in $\Omega$ of an incompressible inviscid fluid is described by the Euler equations
\begin{equation}\label{E}
\left\{\begin{array}{rcll}
\partial_t u+ \diver (u \otimes u) +\nabla p &=& 0 &\text{in } \Omega\times (0,T)\\[1mm]
\diver u& = &0&\text{in } \Omega\times (0,T)\\[1mm]
u\cdot n&=&0&\text{on } \partial \Omega\times (0,T)
\end{array}\right.
\end{equation}
where $u\colon\Omega\times(0,T)\rightarrow\R^d$ and $p\colon\Omega\times(0,T)\rightarrow\R$ are the \emph{velocity} of the fluid and its \emph{hydrodynamic pressure} respectively and $n\colon\partial\Omega\to\R^d$ the outward unit normal to $\partial \Omega$. The boundary condition $u(\cdot, t)\cdot n=0$ on $\partial \Omega$ is the usual \emph{no-flow condition}, which prohibits the fluid to escape from the spatial domain $\Omega$, being it always tangential to the boundary.

\subsection{The pressure equation}
In this article, we focus on the pressure $p$.
Taking the divergence of the first equation in~\eqref{E}, we get
\begin{equation}
\label{eq:squirrel}
-\Delta p(\cdot,t)=\diver \diver (u(\cdot,t)\otimes u(\cdot, t)) \quad \text{in } \Omega
\end{equation}
for all $t\in (0,T)$.
Being interested in the spatial regularity of $p$, from now on we will fix a time slice $t$ and consider the vector field $u(\cdot,t)$. Thus, for simplicity, we will drop the explicit time dependence and we will just write $u=u(x)$. 
In particular, our results can be applied for every fixed $t$-time slice. 

In a bounded domain $\Omega$, we clearly need to complement the (interior) elliptic equation~\eqref{eq:squirrel} with an appropriate boundary condition, which in the case of tangential boundary condition takes the form 
\begin{equation}
\label{eq:nut}
\partial_n p\,=u\otimes u :\nabla n \quad
\text{on } \partial\Omega,
\end{equation}
where we implicitly assumed the normal $n$ to be extended to a $C^1$ vector field in a neighborhood of $\partial \Omega$ in order to compute its gradient.  Note that this makes sense if the domain is of class $C^2$. 

The Neumann-type boundary condition~\eqref{eq:nut} can be obtained if we scalar multiply by~$n$ the first equation in~\eqref{E}. Indeed, since $u$ is divergence-free and tangential to the boundary, at least in the case in which the velocity is regular enough, we can compute
\begin{equation}
\label{reassembl_bound_cond}
\begin{split}
\partial_n p&=\nabla p\cdot n
=-\diver (u\otimes u)\cdot n =-\partial_i(u_i u_j)n_j
=-u_i\partial_i(u_j ) n_j
\\
&=-u_i\partial_i(u_j n_j)+u_iu_j\partial_i n_j
=-u\cdot\nabla(u\cdot n)+u\otimes u:\nabla n
\\
&=u\otimes u:\nabla n. 
\end{split}
\end{equation}
Here and in the rest of the paper,   we adopt the Einstein summation convention with repeated indexes.
In the last equality in the above chain, we used that $\partial \Omega$ is a level set of the scalar function $u\cdot n$, and thus $\nabla (u\cdot n)|_{\partial \Omega}$ is parallel to $n$.  
Thus, at least in the regular setting, the pressure $p$ solves
\begin{equation}\label{p_problem}
\left\{\begin{array}{rcll}
-\Delta p &=& \diver \diver (u\otimes u) & \text{in } \Omega\\[2mm]
\partial_n p\, &=&u\otimes u : \nabla n & \text{on } \partial\Omega.
\end{array}\right.
\end{equation}

In order to deal with $u\in C^\theta(\Omega)$,  we need to interpret \eqref{p_problem} in the weak sense, that is, we consider a scalar function $p\in C^0(\overline \Omega)$ such that 
\begin{equation}\label{p_weaksol}
-	\int_{\Omega} p\, \Delta \varphi \,dx+\int_{\partial \Omega} p \,\partial_n \varphi\,dx = \int_{\Omega} u\otimes u : H \varphi\,dx,
\qquad \text{for all } \varphi \in C^2(\overline \Omega),
\end{equation}
where we denoted by $H \varphi$ the Hessian matrix of the scalar function $\varphi$. Relation \eqref{p_weaksol} is obtained as usual by multiplying the first equation in \eqref{p_problem} by the test function $\varphi$ and then integrating by parts.  

At this point one may wonder if the pressure $p$ coming from the Euler equations solves~\eqref{p_problem} only if the velocity is regular enough, say $u(t)\in C^1(\overline \Omega)$. Indeed the latter regularity has been crucially used in order to derive the boundary condition in \eqref{reassembl_bound_cond}. However, it can be easily shown that the weak formulation of the pressure equation \eqref{p_weaksol} can be always derived whenever $u,p\in C^0(\overline \Omega)$.
Indeed, if the  uniformly continuous  couple $(u,p)$ weakly solves~\eqref{E}, then 
$$
\int_0^T\left( \int_{\Omega} \left(u\cdot \psi  \,\eta' +\eta\, u\otimes u:\nabla \psi+\eta \,p\diver \psi \right) dx-\eta \int_{\partial \Omega} p\psi\cdot n \,dx \right)dt=0
$$
whenever $\eta\in C^\infty_c((0,T))$ and $\psi\in C^1(\overline \Omega;\R^d)$. 
In particular, choosing $\psi=\nabla \varphi$, we get that 
$$
\int_0^T\eta \left( \int_{\Omega} \left( u\otimes u: H\varphi+p\Delta \varphi \right) dx- \int_{\partial \Omega} p\partial_n\varphi \,dx \right)dt=0
$$
for all $\varphi\in C^2(\overline \Omega)$, so that, for any fixed time slice $t\in (0,T)$, the couple $(u(t),p(t))$ must solve~\eqref{p_weaksol}.

\subsection{Regularity of the pressure}

From the heuristic standard elliptic regularity theory applied to the Neumann problem \eqref{p_problem}, one is tempted to say that the pressure $p$ has (at least) the same H\"older regularity of $u$ up to the boundary.
However, as noticed in~\cite{BT21}, this regularity property does not directly come from any known elliptic regularity result. Moreover, it is also known~\cites{SILV,CD18,Is2013} that, if $\Omega$ is either the whole space $\R^d$ or the $d$-dimensional torus $\T^d$ (i.e., no boundary is involved), then the pressure enjoys the following regularity properties
\begin{equation}\label{p_double_regular}
p\in \left\{\begin{array}{ll}
C^{2\theta} &\text{ if } 0<\theta<\frac{1}{2}\\[2mm]
C^{1,2\theta-1} & \text{ if } \frac{1}{2}<\theta<1.
\end{array}\right.
\end{equation}
Moreover, as observed in~\cite{C2014}, in the critical case $\theta=\frac{1}{2}$, the pressure $p$ is log-Lipschitz, namely
$$
|p(x_1)-p(x_2)| \leq C|x_1-x_2|\,|\log|x_1-x_2||
$$
for all $x_1,x_2\in\Omega$, with $|x_1-x_2|<\frac12$.
Similar results can be also shown in the classes of Besov and Sobolev  solutions, see~\cite{CDF20}.  

The regularity in~\eqref{p_double_regular} basically means that $p$ is twice as regular as $u$. 
Thus, even in a bounded domain $\Omega$, by a standard localization argument, one immediately gets that the pressure still enjoys an \emph{interior} double regularity property.  

On the other hand, in the recent work~\cite{BT21}, the authors proved that, in the $2$-dimensional case, the pressure satisfies $p\in C^\theta(\Omega)$, thus providing the $\theta$-H\"older regularity of the pressure \emph{up to the boundary}.

\subsection{Main results}

The aim of this work is to extend the boundary regularity of the pressure to every dimension $d\geq 2$ and to show that  the $2\theta$-H\"older regularity known for domains without boundary, still holds in setting considered in this work.
Note that, when $\theta >\frac{1}{2}$, in order to hope for a regularity property $p\in C^{1,2\theta-1}(\Omega)$, we are forced to require that $\partial \Omega$ is of class $C^{2,2\theta-1}$.
In particular, this implies that $\nabla n\in C^{2\theta-1}$, so that also the Neumann boundary data satisfies 
$$u\otimes u:\nabla n\in C^{\min\{\theta,2\theta-1\}}=C^{2\theta-1}.$$ 
In a bounded domain with a $C^2$ boundary this would clearly not be possible in general, since in this case the  Neumann boundary condition $u\otimes u:\nabla n\in C^{0}$ is inconsistent with the fact that $p$ has H\"older continuous first derivatives up to the boundary. 

Our main result reads as follows.

\begin{theorem}[Regularity of the pressure]
\label{t_main}
Let $d\geq 2$ and let  $\Omega\subset \R^d$ be a bounded simply connected open set with boundary of class $C^{2,\alpha}$, for some $\alpha>0$.   
Let $\theta\in (0,1)$ and let $u\in C^\theta(\Omega)$ be a weakly divergence-free vector field such that $u\cdot n|_{\partial \Omega}=0$.  
Then, there exists a unique zero-average solution $p\in C^0(\overline \Omega)$ of~\eqref{p_weaksol} with the following regularity properties.
\begin{enumerate}[(i)]
\item
\label{item:main<12}
If $\theta\in\left(0,\frac{1}{2}\right]$, then $p\in C^\theta(\Omega)$ and there exists a constant $C>0$ such that 
$$
\|p\|_{C^{\theta}(\Omega)}\leq C \|u\|_{C^0(\Omega)} \|u\|_{C^\theta(\Omega)}.
$$

\item 
\label{item:main>12}
If $\theta\in\left(\frac{1}{2},1\right)$, then $p\in C^{1,\min(\alpha,2\theta-1)}(\Omega)$ and there exists a constant $C>0$ such that 
 $$
\|p\|_{C^{1,\min\{\alpha,2\theta-1\}}(\Omega)}\leq C \|u\|_{C^\theta(\Omega)}^2.
$$
In particular, if $\Omega$ is of class $C^{2,2\theta-1}$, then $p\in C^{1,2\theta-1}(\Omega)$.
\end{enumerate}
\end{theorem}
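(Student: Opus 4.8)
The plan is to reduce Theorem~\ref{t_main} to an a~priori estimate on smooth data and then to a representation of $p$ through the Neumann Green's function of $\Omega$. First I would approximate $u$ by smooth divergence-free vector fields $u_\eps$ that are tangent to $\partial\Omega$ --- obtained, e.g., by mollifying a suitable potential or by solving a div--curl system --- and solve the classical Neumann problem~\eqref{p_problem} for the associated pressures $p_\eps$. The required compatibility condition $\int_\Omega\diver\diver(u_\eps\otimes u_\eps)\,dx=-\int_{\partial\Omega}u_\eps\otimes u_\eps:\nabla n\,dS$ holds \emph{precisely} because $\diver u_\eps=0$ and $u_\eps\cdot n|_{\partial\Omega}=0$, by the computation in~\eqref{reassembl_bound_cond}. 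If the bounds in parts~\eqref{item:main<12}--\eqref{item:main>12} are proved uniformly in $\eps$, a compactness argument yields the desired weak solution $p$ of~\eqref{p_weaksol}; uniqueness among zero-average solutions follows from the linearity of~\eqref{p_weaksol} together with uniqueness for the homogeneous Neumann problem.

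The core is the following representation. Let $G(x,y)$ be the Neumann Green's function of $\Omega$, so that $-\Delta_yG(x,\cdot)=\delta_x-|\Omega|^{-1}$ in $\Omega$, $\partial_{n_y}G(x,\cdot)=0$ on $\partial\Omega$, and $\int_\Omega G(x,y)\,dy=0$. Testing~\eqref{p_weaksol} with (a regularization of) $\varphi=G(x,\cdot)$ gives $p(x)=\operatorname{p.v.}\int_\Omega u\otimes u(y):H_yG(x,y)\,dy$ for the zero-average solution. The kernel $H_yG$ is of Calder\'on--Zygmund type; \emph{freezing} $u\otimes u$ at $x$ and carrying out the divergence-theorem bookkeeping near the pole $y=x$ and near $\partial\Omega$ --- where $\diver u=0$ and $u\cdot n=0$ remove the interior and boundary contributions of $\int_\Omega u_j(y)\,\partial_i\partial_jG(x,y)\,dy$ --- recasts it as
\[
p(x)=\int_\Omega\bigl[u\otimes u(y)-u\otimes u(x)\bigr]:H_yG(x,y)\,dy+\int_{\partial\Omega}\bigl(u(x)\cdot\nabla_yG(x,y)\bigr)\bigl(u(x)\cdot n(y)\bigr)\,dS(y)+c_d\,|u(x)|^2 ,
\]
with $c_d$ a dimensional constant. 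This is the bounded-domain analogue of $p=R_iR_j(u_iu_j)=R_iR_j\bigl((u_i-c_i)(u_j-c_j)\bigr)$ on $\T^d$: subtracting $u\otimes u(x)$ makes the first integrand $\lesssim|x-y|^{\theta-d}$, hence absolutely convergent, and this ``one derivative for free'' is what underlies~\eqref{p_double_regular}.

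Next I would assemble the quantitative facts about $G$ in the $C^{2,\alpha}$ domain $\Omega$: the pointwise bounds $|D_x^aD_y^bG(x,y)|\lesssim|x-y|^{2-d-|a|-|b|}$ for $|a|+|b|\le 2$ (with a logarithm when the exponent vanishes and $d=2$) and their H\"older-in-$x$ refinements, \emph{valid up to the boundary}; this is where $\partial\Omega\in C^{2,\alpha}$, hence $\nabla n\in C^\alpha$, enters, and it rests on boundary Schauder estimates. Granting these, part~\eqref{item:main<12} is obtained by estimating the three terms in the representation directly: the first integral yields a $C^\theta$ bound of the form $\|u\|_{C^0}\|u\|_{C^\theta}$ by the usual kernel-difference argument, and $c_d|u(x)|^2$ is trivially of that form; the boundary integral, whose kernel $|\nabla_yG(x,y)|\lesssim|x-y|^{1-d}$ is \emph{not} absolutely integrable over the $(d-1)$-dimensional surface $\partial\Omega$ when $x\to\partial\Omega$, is controlled because $u(x)\cdot n(y)$ is small there: if $x_0$ is the nearest boundary point, then $u(x)\cdot n(y)=(u(x)-u(x_0))\cdot n(y)+u(x_0)\cdot(n(y)-n(x_0))$ since $u(x_0)\cdot n(x_0)=0$, which is $\lesssim\|u\|_{C^\theta}\bigl(\operatorname{dist}(x,\partial\Omega)^\theta+|y-x_0|\bigr)$ and exactly absorbs the borderline divergence. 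For part~\eqref{item:main>12} one differentiates the representation in $x$: $\nabla_xH_yG(x,y)$ is a genuine Calder\'on--Zygmund kernel, so $\nabla p$ is a principal-value integral of $\nabla_xH_yG(x,y):(u\otimes u(y)-u\otimes u(x))$ plus differentiated boundary and algebraic terms; absolute convergence near $y=x$ now uses the full H\"older bound $|u\otimes u(y)-u\otimes u(x)|\lesssim\|u\|_{C^\theta}^2|x-y|^\theta$ with $\theta>\tfrac12$, and the H\"older-$\min(\alpha,2\theta-1)$ modulus of $\nabla p$ comes from the standard two-scale splitting of kernel differences, reproducing the gain in~\eqref{p_double_regular} now localized to $\overline\Omega$.

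The main obstacle is the boundary analysis, in two intertwined parts. The first is establishing the sharp pointwise \emph{and} H\"older estimates for $G$ and its first and second derivatives up to $\partial\Omega$ in a merely $C^{2,\alpha}$ domain, uniformly in the pole: this is the technical heart, and it is the reason why, as noted in~\cite{BT21}, the result ``does not directly come from any known elliptic regularity result''. The second, more structural, is verifying that the no-flow condition is genuinely harvested in the boundary-supported error terms --- where naive bounds diverge as $x\to\partial\Omega$ --- by repeatedly using that the normal component of $u$ vanishes on $\partial\Omega$, so that quantities such as $u(x)\cdot n(y)$ (and, after freezing at a boundary point, $u(x_0)\cdot n(y)$) are of the same order as the oscillation of $u$. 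Finally, the principal-value manipulations and the double integration by parts must be legitimized at $C^\theta$ regularity; this is done on the smooth approximations $u_\eps$ and passed to the limit. Once the Green's function estimates up to the boundary are in place, the remaining singular-integral bookkeeping is routine and entirely dimension-independent, which is what upgrades~\cite{BT21} to every $d\ge 2$.
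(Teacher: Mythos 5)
Your overall blueprint (approximate, pass to a Green--Neumann representation, harvest $\diver u=0$ and $u\cdot n|_{\partial\Omega}=0$ near the boundary, and rely on up-to-the-boundary pointwise estimates for derivatives of $G$) matches the paper's strategy, but your representation formula and the downstream estimates are genuinely different from what the paper does, and the difference matters.

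The paper's \cref{p_representation} uses the \emph{partial} desingularization $\diver\diver(u\otimes u)=\partial_{ij}\big((u_i-u_i(x))\,u_j\big)$, subtracting $u(x)$ from only one factor. This preserves a divergence-free factor $u_j(y)$ in the $j$-slot, and two integrations by parts then produce \emph{no} boundary term at all: one boundary contribution exactly cancels the Neumann datum $u\otimes u:\nabla n$ (via the computation in~\eqref{reassembl_bound_cond}), and the other vanishes because $u\cdot n=0$. The result is the clean interior formula~\eqref{formula_for_p}. You instead subtract the full tensor $u\otimes u(x)$ and work with a principal value, which forces a boundary integral $\int_{\partial\Omega}(u(x)\cdot\nabla_yG)\,(u(x)\cdot n(y))\,dS$ and an algebraic term $c_d|u(x)|^2$ into the representation. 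That is a legitimate alternative normalization, but you then have to prove H\"older regularity \emph{of the boundary integral itself} as $x\to\partial\Omega$, and your sketch only establishes pointwise boundedness of it (you factor $u(x)\cdot n(y)$ and observe the singular surface integral converges), not a uniform $C^\theta$ modulus in $x$. Because $|\nabla_yG(x,y)|\sim|x-y|^{1-d}$ is borderline non-integrable on the $(d-1)$-surface, the increment $p(x_1)-p(x_2)$ of this boundary term when $x_1,x_2$ both approach $\partial\Omega$ needs its own kernel-difference argument; this is a genuine extra layer of work that your outline does not contain. The paper sidesteps it entirely.

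A second, related gap: with the paper's formula the delicate term in the seminorm estimate is $B_2=\int_{\Omega\setminus B}\partial_{y_iy_j}G(x_2,y)\,(u_i(x_2)-u_i(x_1))\,u_j(y)\,dy$, which naively gives a $\lambda^\theta\log(1/\lambda)$ bound. The paper removes the log by one more application of the Divergence Theorem, using $\diver u=0$ and $u\cdot n|_{\partial\Omega}=0$ once again to reduce $B_2$ to a surface integral over $\Omega\cap\partial B(\bar x,\lambda)$. In your scheme the analogue of $B_2$ is spread across the constant piece that you absorbed into the boundary and algebraic terms; that reshuffling is legitimate, but it is precisely this piece whose H\"older control you have not supplied.

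Finally, for $\theta>\tfrac12$ you propose to differentiate the representation directly, which requires third-order kernel estimates $\nabla_x H_yG$ up to the boundary with a further H\"older refinement. The paper takes a lighter route: extend $u$ to $\R^d$ via the divergence-free extension of~\cref{extension.lemma}, solve in the whole space to get $\tilde p\in C^{1,2\theta-1}(\R^d)$ via~\cite{CD18}, and then correct with a harmonic function $q$ solving a Neumann problem with $C^{\min\{\alpha,2\theta-1\}}$ data, handled by classical Schauder (\cref{nardi-1}). This stays at second derivatives of $G$ and does not need a H\"older-in-$x$ refinement of the Green's-function estimates beyond~\eqref{eq.pointwiseDifference}. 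Your route is plausible but again asks for a noticeably stronger Green's-function package than the paper actually proves in~\cref{l:greenestimate}. In short: the structure of your argument is right, but the specific desingularization you chose generates boundary terms that still require a missing H\"older estimate, and your treatment of the case $\theta>\tfrac12$ demands estimates the paper deliberately avoids needing.
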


The stability estimate in~\eqref{item:main<12} clearly indicates that only one of the two vectors $u$ in the right-hand side is used to transfer the $C^\theta$ regularity to $p$.
In fact, we expect that the other vector could be exploited in a better way in order to double the regularity of the pressure.

The assumption on the $C^{2,\alpha}$ smoothness of $\partial \Omega$ for some (arbitrary small) $\alpha>0$ is just technical.
Indeed, this regularity is only needed to provide a suitable approximation for the vector field $u$ and to prove the asymptotic estimates for the Green--Neumann function, see  \cref{approx.lemma} and \cref{l:greenestimate} below. We believe that this assumption might not be sharp, since the $C^2$ smoothness of the boundary of the domain (at least heuristically) seems to be sufficient for proving~\eqref{item:main<12}. 

Actually, as noticed in~\cite{BT21} in the planar case $d=2$, the approximation of the velocity field can be performed in a $C^2$ domain by using the \emph{stream function} of~$u$, which plays the role of a potential.
In the case $d>2$, we do not know how to provide an appropriate corresponding $d$-dimensional version of the stream-function approach.
We refer the reader to \cref{remark_C2} below for a precise discussion about the issue of the boundary regularity of the domain together with a possible strategy to solve it.  

The approach used in~\cite{BT21} is based on a suitable geodesic parametrization of the boundary (together with the aforementioned regularization of~$u$) which, in turn, strongly relies on the $2$-dimensional structure.

Our approach is different and follows the main idea of~\cite{CD18}.
Indeed, we rewrite $p$ as a singular integral operator applied to a suitable data of the form $(u-a)\otimes u$, where $a$ is a $d$-dimensional vector needed to desingularize the kernel.  
More precisely, we can state the following result.

\begin{prop}[Representation formula]\label{p_representation}
Let $d\geq 2$ and let $\Omega\subset \R^d$ be a bounded simply connected  open set of class $C^2$. 
Let $u\in C^\infty(\Omega)\cap C^1(\overline \Omega)$ be such that $\diver u=0$  and $u\cdot n|_{\partial \Omega}=0$. 
If $p$ is a weak solution of \eqref{p_problem}, then 
\begin{align}\label{formula_for_p}
p(x)-\frac{1}{|\Omega|}\int_{\Omega}p(y)\,dy&=\int_{\Omega} \partial_{y_iy_j}G(x,y)\,(u_i(y)-u_i(x))\,u_j(y)\,dy
\end{align}
for all $x\in\Omega$, where $G=G(x,y)$ is the Green--Neumann function on $\Omega$.
\end{prop}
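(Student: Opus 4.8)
The plan is to start from the Green--Neumann representation of the solution of the Neumann problem~\eqref{p_problem} and then desingularize the second-derivative kernel using the two structural hypotheses on $u$, namely $\diver u = 0$ and $u\cdot n|_{\partial\Omega}=0$. Recall that the Green--Neumann function $G(x,y)$ satisfies $-\Delta_y G(x,\cdot) = \delta_x - \frac{1}{|\Omega|}$ in $\Omega$ with $\partial_{n_y}G(x,\cdot) = 0$ on $\partial\Omega$ (up to the usual normalization $\int_\Omega G(x,y)\,dy = 0$), so that for data $f = \diver\diver(u\otimes u)$ and boundary data $g = u\otimes u:\nabla n$ the standard integration-by-parts identity gives
\begin{equation*}
p(x) - \frac{1}{|\Omega|}\int_\Omega p(y)\,dy = \int_\Omega G(x,y)\, \diver\diver(u\otimes u)(y)\,dy - \int_{\partial\Omega} G(x,y)\, (u\otimes u:\nabla n)(y)\,d\sigma(y).
\end{equation*}
Since $u\in C^\infty(\Omega)\cap C^1(\overline\Omega)$, all the manipulations below are classical and justified pointwise.

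The next step is to integrate by parts twice in the volume term, moving the two divergences off $u\otimes u$ and onto $G(x,\cdot)$. Each integration by parts produces a boundary term on $\partial\Omega$; one of these will exactly cancel the boundary integral $-\int_{\partial\Omega} G(x,y)(u\otimes u:\nabla n)\,d\sigma$ coming from the Neumann data, while the other vanishes because $\partial_{n_y}G(x,\cdot)=0$ on $\partial\Omega$ together with $u\cdot n|_{\partial\Omega}=0$ (here one uses, exactly as in~\eqref{reassembl_bound_cond}, that $\partial_\tau(u\cdot n)$ contributes through $\nabla n$ and that the normal part of $\diver(u\otimes u)\cdot n$ reassembles into $u\otimes u:\nabla n$). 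After the cancellations one is left with the clean interior identity
\begin{equation*}
p(x) - \frac{1}{|\Omega|}\int_\Omega p(y)\,dy = \int_\Omega \partial_{y_iy_j}G(x,y)\, u_i(y)\,u_j(y)\,dy.
\end{equation*}
Finally one inserts the constant (in $y$) vector $a = u(x)$: because $\partial_{y_i}\partial_{y_j}G(x,y)\,u_j(y)$ integrated against the constant $u_i(x)$ gives, after one integration by parts in $y_i$, a term $-u_i(x)\int_\Omega \partial_{y_j}G(x,y)\,\partial_{y_i}u_j(y)\,dy$ plus a boundary term; using $\diver u = 0$ (so $\partial_{y_i}u_i = 0$ — but note here the free index is $j$, so one instead rewrites $\partial_{y_i}\partial_{y_j}G = \partial_{y_j}\partial_{y_i}G$ and uses $\partial_{y_j}(u_j) $... ) — more carefully, one writes $\int_\Omega \partial_{y_iy_j}G(x,y)\,u_i(x)u_j(y)\,dy$ and integrates by parts in $y_j$, so the volume remainder carries $\partial_{y_j}u_j = \diver u = 0$ and the boundary term carries $u_j n_j = u\cdot n = 0$ on $\partial\Omega$; hence this added term vanishes identically and we may subtract it freely, yielding~\eqref{formula_for_p}.

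The main obstacle is the careful bookkeeping of the several boundary integrals generated by the repeated integrations by parts: one must check that the tangential derivatives hitting $G(x,\cdot)$ on $\partial\Omega$ combine correctly with the curvature terms $\nabla n$ and with the vanishing of $u\cdot n$ and of $\partial_{n_y}G$, so that precisely the right pieces cancel and no spurious boundary contribution survives. This is where the hypotheses $\diver u = 0$ and $u\cdot n|_{\partial\Omega}=0$, as well as the Neumann condition on $G$, are all used; the argument mirrors the formal computation~\eqref{reassembl_bound_cond} but must be carried out in the weak formulation~\eqref{p_weaksol} (equivalently, by testing against $G(x,\cdot)$ regularized near the diagonal and passing to the limit). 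Regularity of $\partial\Omega$ of class $C^2$ is exactly what is needed for $\nabla n$ to make sense and for the Green--Neumann function and its first derivatives to be continuous up to the boundary away from the diagonal.
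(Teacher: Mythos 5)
Your overall plan — start from the classical Green--Neumann representation, integrate by parts twice onto $G$, and then insert the constant $u(x)$ — differs from the paper in one essential way: the paper desingularizes \emph{before} writing down the Green representation, by observing that
\begin{equation*}
\diver\diver(u\otimes u)=\partial_j u_i\,\partial_i u_j=\partial_{ij}\big((u_i-u_i(x))u_j\big),
\end{equation*}
and only then applies the Green--Neumann formula and two integrations by parts. In the paper's order of operations every intermediate volume integral is absolutely convergent: after the second integration by parts the kernel $\partial_{y_iy_j}G\sim|x-y|^{-d}$ is multiplied by $(u_i(y)-u_i(x))u_j(y)=O(|x-y|)$, which restores integrability. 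In your order (desingularize \emph{last}), the second integration by parts produces the expression $\int_\Omega\partial_{y_iy_j}G(x,y)\,u_i(y)u_j(y)\,dy$, which is \emph{not} a Lebesgue integral: the integrand behaves like $|x-y|^{-d}$ near $y=x$ and is not absolutely integrable. The integration by parts that generates it is therefore invalid as stated; one has to excise a small ball $B_\delta(x)$ and track the surface contribution on $\partial B_\delta(x)$, which does \emph{not} vanish as $\delta\to0$.

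That missing surface contribution is exactly what makes your final step wrong. You claim that
\begin{equation*}
\int_\Omega\partial_{y_iy_j}G(x,y)\,u_i(x)\,u_j(y)\,dy=0
\end{equation*}
``by one integration by parts in $y_j$, using $\diver u=0$ and $u\cdot n=0$.'' Again this is an integration by parts on a non-integrable function. Carrying it out on $\Omega\setminus B_\delta(x)$ and using $\diver u=0$ and $u\cdot n|_{\partial\Omega}=0$, all the $\Omega$-volume and $\partial\Omega$-surface terms do drop, but one is left with
\begin{equation*}
u_i(x)\lim_{\delta\to0}\int_{\partial B_\delta(x)}\partial_{y_i}G(x,y)\,u_j(y)\,\nu_j\,d\sigma(y)
= u_i(x)\,u_i(x)=|u(x)|^2,
\end{equation*}
coming from the Newtonian part $\partial_{y_i}G(x,y)\approx\frac{x_i-y_i}{\omega_d|x-y|^d}$ (here $\nu$ is the outward normal of $\Omega\setminus B_\delta$). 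So the quantity you subtract ``freely'' is, in the principal-value sense, equal to $|u(x)|^2$, not $0$. A matching $|u(x)|^2$ also appears in the principal value of $\int_\Omega\partial_{y_iy_j}G\,u_iu_j\,dy$, and the two cancel in the \emph{difference}, which is why the final formula~\eqref{formula_for_p} is nevertheless correct — but this cancellation is the whole content of the step, and your argument skips it by asserting each term to be separately well-defined (they are not) and one of them to be zero (it is not). You also have the sign of the boundary integral in the Green--Neumann representation reversed: Green's second identity with $-\Delta_y G=\delta_x-|\Omega|^{-1}$, $\partial_n G=0$, $-\Delta p=f$, $\partial_n p=g$ gives $p(x)-|\Omega|^{-1}\int_\Omega p = \int_\Omega Gf\,dy+\int_{\partial\Omega}Gg\,d\sigma$, with a plus sign on both terms. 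To fix the proof you should either follow the paper and absorb $u_i(x)$ into the source term before ever integrating by parts onto $G$, or else carry out your manipulations on $\Omega\setminus B_\delta(x)$ and verify explicitly that the $\partial B_\delta$ contributions cancel in the limit.
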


Note that the right-hand side of~\eqref{formula_for_p} makes sense since the singularity of the kernel $y\mapsto\partial_{y_iy_j} G(x,y)$ at $y=x$ can be resolved by the term $u_i(y)-u_i(x)$. 

We also remark that looking at weak solutions in the sense of~\eqref{p_weaksol} plays a crucial role in our approach, since in this way there is no need to introduce any other boundary condition apart from the natural one as in~\eqref{p_problem}.
This is in fact different from what happens in~\cite{BT21}, where the authors need to define and deal with a specific notion of trace of the normal derivative of the pressure $\partial_n p$ at the boundary of $\Omega$.

We apply \cref{p_representation} to the regular approximation of $u\in C^\theta(\Omega)$ given by the approximation  \cref{approx.lemma}.
We then show that the corresponding H\"older norm of $p$ is uniformly bounded by the norm of $u$, thus getting the regularity estimates claimed in \cref{t_main} for the solution $p$ of~\eqref{p_weaksol}.\\

Moreover, despite the fact that we are not able to quadratically desingularize the kernel by using the representation formula~\eqref{formula_for_p} above, we propose an abstract interpolation argument which shows that an \emph{almost} double regularity holds even for $\theta<\frac12$.  
However, this abstract approach requires more regularity on the domain $\Omega$.

\begin{theorem}[Almost double regularity for $\partial \Omega\in C^{3,\alpha}$]\label{t:p_almost_double}
Let $d\geq 2$ and let $\Omega\subset \R^d$ be a bounded simply connected open set with boundary of class $C^{3,\alpha}$, for some $\alpha>0$. Let $\eps>0$, $\theta\in \left(0,\frac12 \right)$  and $u\in C^\theta(\Omega)$ be a weakly divergence-free vector field such that $u\cdot n |_{\partial \Omega}=0$. Then, there exists a constant $C>0$ such that the unique zero-average solution $p$ of~\eqref{p_weaksol} enjoys the estimate
\begin{equation}\label{est_p_almost_double}
\|p\|_{C^{2\theta-\eps}(\Omega)}\leq C \|u\|^2_{C^\theta(\Omega)}.
\end{equation}
\end{theorem}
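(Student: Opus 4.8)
The plan is to derive \cref{t:p_almost_double} from the two regularity results already available: the endpoint estimate $\|p\|_{C^{1/2}(\Omega)} \lesssim \|u\|_{C^0(\Omega)}\|u\|_{C^{1/2}(\Omega)}$ coming from \cref{t_main}\eqref{item:main<12} at $\theta=\tfrac12$ (together with its interior counterpart and, more importantly, the full interior double regularity $p\in C^{2\theta}_{\mathrm{loc}}(\Omega)$), and the sub-critical boundary estimate $\|p\|_{C^\theta(\Omega)} \lesssim \|u\|_{C^0(\Omega)}\|u\|_{C^\theta(\Omega)}$ for $\theta<\tfrac12$. The key point is that the pressure, viewed as a bilinear operator $u\mapsto p$, or better the linear solution operator $T$ that sends a forcing of the form $\diver\diver(u\otimes u)$ (with the compatible Neumann data) to the zero-average weak solution $p$, is bounded on a scale of spaces. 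On the torus one has the full double-regularity gain $T\colon C^\sigma\to C^{2\sigma}$ for all $\sigma\in(0,1)$, $\sigma\neq\tfrac12$; in the bounded domain we only know $T$ lands in $C^\sigma$ up to the boundary but in $C^{2\sigma}$ in the interior. I would set up an interpolation between these two behaviours.

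First I would make rigorous the boundary-layer/interior splitting. Fix $\theta<\tfrac12$ and $\eps>0$. For a point $x\in\Omega$ let $\delta_x=\operatorname{dist}(x,\partial\Omega)$. On the \emph{interior region} $\{\delta_x\gtrsim \lambda\}$, rescaled interior Schauder estimates for the Neumann problem give control of $[p]_{C^{2\theta}}$ with a constant blowing up like a negative power of $\lambda$ (precisely, the $C^{2\theta}$ seminorm on a ball of radius $\delta_x$ scales like $\delta_x^{-2\theta}\times(\text{lower order})$, which is the standard feature of interior estimates). On the \emph{boundary layer} $\{\delta_x\lesssim\lambda\}$ we only have the $C^\theta$ bound from \cref{t_main}, but the layer has small width. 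Combining a $C^\theta$ bound on a thin layer with a $C^{2\theta}$ bound (with bad constant) in the interior, and optimising in $\lambda$, yields a $C^{2\theta-\eps}$ bound on all of $\Omega$ for every $\eps>0$ with a constant depending on $\eps$; this is a clean real-interpolation-type trade-off, the same mechanism one uses to interpolate between an interior $C^{2}$ estimate and a global $C^{0}$ estimate to get global $C^{2-\eps}$. Alternatively, and perhaps more cleanly, I would phrase this as a genuine interpolation statement: the solution operator is bounded $C^\theta\to C^\theta$ (globally) and $C^\theta\to C^{2\theta}$ (interior, with weights measuring distance to the boundary), and a weighted interpolation inequality of Gagliardo--Nirenberg type with a loss $\eps$ closes the argument. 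The role of the extra regularity $\partial\Omega\in C^{3,\alpha}$ is exactly to guarantee that the interior Schauder estimates for the Neumann problem near (but not on) the boundary, and the required estimates on $G$ and its derivatives, hold up to the order $C^{2\theta}$ with the stated quantitative dependence on $\delta_x$ — one extra derivative on the boundary chart than what was needed for \cref{t_main}.

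The main obstacle I anticipate is making the interior-versus-boundary interpolation fully quantitative in Hölder spaces \emph{with the correct bilinear structure on the right-hand side}, i.e.\ producing the bound in terms of $\|u\|_{C^\theta(\Omega)}^2$ rather than $\|u\|_{C^0}\|u\|_{C^\theta}$. The sub-critical global estimate of \cref{t_main} already supplies $\|u\|_{C^0}\|u\|_{C^\theta}\le\|u\|_{C^\theta}^2$ on the layer, which is fine; the delicate part is the interior piece, where one must run the representation formula \cref{p_representation} (or classical interior Schauder estimates) on balls $B_{\delta_x}(x)$ and track how the desingularising subtraction $u_i(y)-u_i(x)$ interacts with the rescaling. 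A secondary technical nuisance is the approximation step: as in the proof of \cref{t_main} one works with smooth $u$ via \cref{approx.lemma} and passes to the limit, so one must check the $\eps$-loss estimate is stable under this approximation — this is routine since the bound is uniform. The rest is bookkeeping: choose $\lambda=\lambda(\eps)$ optimally, sum the layer and interior contributions, and absorb constants.
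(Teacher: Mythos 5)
There is a genuine gap here: the interior-vs-boundary-layer interpolation you propose does not produce the asserted improvement, and the ``classical mechanism'' you invoke (interior $C^2$ plus global $C^0$ implies global $C^{2-\eps}$) is in fact false.

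To see the obstruction, consider the weighted interior estimate you would obtain from \cref{p_representation} or interior Schauder theory. For $x\in\Omega$ with $\delta_x=\operatorname{dist}(x,\partial\Omega)$, rescaling to the unit ball gives
\[
[p]_{C^{2\theta}(B_{\delta_x/4}(x))}\ \lesssim\ [u]^2_{C^\theta(\Omega)} + \delta_x^{-2\theta}\operatorname*{osc}_{B_{\delta_x/2}(x)} p\ \lesssim\ [u]^2_{C^\theta(\Omega)} + \delta_x^{-\theta}\,[p]_{C^\theta(\Omega)},
\]
where the last step uses the boundary-layer bound $[p]_{C^\theta(\Omega)}\lesssim\|u\|_{C^\theta}^2$ from \cref{t_main}\eqref{item:main<12}. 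The point is that the constant blows up like $\delta_x^{-\theta}$, and this loss \emph{exactly cancels} the gain from $C^{2\theta}$ versus $C^\theta$: for two points at mutual distance $h$ and distance $\delta\ge h$ from the boundary, the interior estimate gives $|p(x_1)-p(x_2)|\lesssim h^{2\theta}\delta^{-\theta}$, which at the critical scale $\delta\approx h$ is just $h^\theta$ --- no better than what \cref{t_main} already provides. A one-dimensional heuristic makes this sharp: the function $q(x)=\operatorname{dist}(x,\partial\Omega)^\theta$ satisfies $[q]_{C^\theta(\Omega)}<\infty$ and $\delta_x^{\theta}[q]_{C^{2\theta}(B_{\delta_x/4}(x))}\lesssim 1$, yet $q\notin C^{2\theta-\eps}(\Omega)$ for any $\eps<\theta$. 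Hence no amount of optimising the layer width $\lambda$ closes the argument; the two inputs you propose to combine are compatible with a function that is only $C^\theta$ globally. (For the same reason, $\operatorname{dist}(\cdot,\partial\Omega)^{1/2}$ refutes the analogous ``$C^2$ interior plus $C^0$ global implies $C^{2-\eps}$ global'' claim.)

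What the paper actually exploits, and what is missing from your sketch, is additional \emph{structural} information about the right-hand side rather than additional localisation. The argument in \cref{sec:almostdouble} is an abstract bilinear interpolation (\cref{t:bil_interp}, from~\cite{CDF20}) applied to the solution operator $T(u,v)$ of~\eqref{bilinear_operat}. Three estimates are fed in: the endpoint $T\colon C^\beta_\sigma\times C^\beta_\sigma\to C^\beta$ from \cref{t_main}\eqref{item:main<12}; the top endpoint $T\colon C^{1,\beta}_\sigma\times C^{1,\beta}_\sigma\to C^{2,\beta}$ from classical Schauder theory, which is where $\partial\Omega\in C^{3,\alpha}$ enters; and, crucially, the \emph{mixed} estimate $T\colon C^\beta_\sigma\times C^{1,\beta}_\sigma\to C^{1,\beta}$, which relies on the algebraic identity $\diver\diver(u\otimes v)=\diver(u\cdot\nabla v)$ valid when only one factor is differentiated, fed into the divergence-form Schauder estimate of \cref{nardi-1}. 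It is precisely this mixed estimate --- a gain in the target space coming from partial smoothness of the input, not from distance to the boundary --- that your interior/boundary split has no analogue of, and that makes the bilinear interpolation nondegenerate. The identification of the interpolation spaces as $\left(C^\beta,C^{2,\beta}\right)_{\theta-\beta,\infty}=C^{2\theta-\beta}$, with the $\eps$-loss coming from having to keep $\beta>0$ away from the integer endpoints, then produces~\eqref{est_p_almost_double}. Your observation that the approximation step from \cref{approx.lemma} and the passage to the limit are routine is correct, but the core mechanism needs to be replaced.
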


The proof of the previous theorem is based on the abstract interpolation result given in~\cite{CDF20}*{Theorem~3.5} together with the $\theta$-H\"older regularity we obtained in \cref{t_main}~\eqref{item:main<12}.  
The assumption on the $C^{3,\alpha}$ regularity of the domain is needed in order to apply the abstract interpolation~\cite{CDF20}*{Theorem 3.5}, because we must guarantee that $p\in C^{2,\alpha}(\Omega)$ whenever $u\in C^{1,\alpha}(\Omega)$.
Note that this condition is indeed true by classical Schauder's estimates applied to the problem~\eqref{p_problem} whenever the boundary datum satisfies $u\otimes u\colon\nabla n\in C^{1,\alpha}(\partial \Omega)$, which, in turn, requires that $\nabla n\in C^{1,\alpha}(\partial \Omega)$ and thus $\partial \Omega\in  C^{3,\alpha}$.  
The general heuristic reason behind the validity of the estimate~\eqref{est_p_almost_double} lies in the fact that, once the \emph{single} $\theta$-regularity estimate given by \cref{t_main}\eqref{item:main<12} is established (and we underline that this regularity is not a direct consequence of the classical Schauder's estimates), the structure of the right-hand side
$$
\diver \diver (u\otimes u)=\diver (u\cdot \nabla u)=(\partial_i u_j) (\partial_j u_i)
$$
always upgrades the regularity to a (almost) double one, if the domain is regular enough to properly apply Schauder's theory. 
The $\eps$-loss in~\eqref{est_p_almost_double} is a consequence of the failure of Schauder's estimates in the integer spaces $C^k(\Omega)$.
\\

Last but not least, we show that the quadratic desingularisation of the kernel allowing for the double regularity result~\eqref{p_double_regular} on the $d$-dimensional torus $\T^d$ holds even if $\diver u \neq 0$, provided that $\diver u$ is suitably regular.
In more precise terms, we can prove the following result.

\begin{theorem}[Double regularity on $\mathbb T^d$ for general $\diver u$]
\label{double_torus}
Let $\theta\in (0,1)$ and $u\in C^\theta(\T^d)$.
The unique zero-average solution $p\in C^\theta(\T^d)$ of the problem
\begin{equation}
\label{p_torus}
-\Delta p=\diver \diver (u\otimes u)\quad \text{in } \T^d
\end{equation}
enjoys the following regularity properties.
\begin{enumerate}[(a)]

\item\label{item:torus<1/2}
If $\theta\in\left(0,\frac{1}{2}\right)$ and $\diver u\in L^q(\T^d)$ for some $q\in\left[\frac{2d}{1-2\theta},+\infty\right]$, then $p\in C^{2\theta}(\T^d)$ and there exists a constant $C>0$ such that 
$$
\|p\|_{C^{2\theta}(\T^d)}\leq C \left(\|u\|^2_{C^\theta(\T^d)}+ \| \diver u\|^2_{L^q (\T^d)} \right).
$$

\item\label{item:torus>1/2}
If $\theta\in\left(\frac{1}{2},1\right)$ and $\diver u\in C^{2\theta-1}(\T^d)$, then $p\in C^{1,2\theta-1}(\T^d)$ and there exists a constant $C>0$ such that 
$$
\|p\|_{C^{1,2\theta-1}(\T^d)}\leq C\left( \|u\|^2_{C^\theta(\T^d)}+ \| \diver u\|^2_{C^{2\theta-1}(\T^d)}\right).
$$
\end{enumerate}
\end{theorem}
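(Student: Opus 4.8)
The plan is to reduce the statement to the known boundary-free double-regularity estimate~\eqref{p_double_regular} for divergence-free fields, treating $\diver u$ as an explicitly controllable perturbation through the Helmholtz decomposition. First, on $\T^d$ I would write $u = v + \nabla\psi$, where $\psi$ is the unique zero-average periodic solution of $\Delta\psi = \diver u =: g$ and $v := u - \nabla\psi$ is (weakly) divergence-free. Classical elliptic regularity gains two derivatives on $\psi$: when $\theta < \tfrac12$, the Calderón--Zygmund estimates give $\psi\in W^{2,q}(\T^d)$, hence (as $q\ge\frac{2d}{1-2\theta}>d$) $\nabla\psi\in W^{1,q}\hookrightarrow C^{0,1-d/q}$ with $1-\tfrac dq\ge\tfrac12+\theta\ge2\theta$, while $\nabla^2\psi\in L^q$; when $\theta>\tfrac12$, the Schauder estimates give $\psi\in C^{2,2\theta-1}$, so $\nabla\psi\in C^{2\theta}$ and $\nabla^2\psi\in C^{2\theta-1}$. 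In both cases $v\in C^\theta(\T^d)$ is divergence-free with $\|v\|_{C^\theta}\lesssim\|u\|_{C^\theta}+\|g\|_*$, where $\|g\|_*$ denotes $\|g\|_{L^q}$, resp.\ $\|g\|_{C^{2\theta-1}}$.

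Next I would split the pressure. Expanding $u\otimes u = v\otimes v + v\otimes\nabla\psi + \nabla\psi\otimes v + \nabla\psi\otimes\nabla\psi$ and using $\diver v = 0$ and $\Delta\psi = g$, I would check the elementary identities $\diver(v\otimes\nabla\psi) = (v\cdot\nabla)\nabla\psi$ and $\diver(\nabla\psi\otimes\nabla\psi) = g\,\nabla\psi + (\nabla\psi\cdot\nabla)\nabla\psi$, together with $\diver \diver(\nabla\psi\otimes v) = \diver \diver(v\otimes\nabla\psi)$ (transpose symmetry of $\diver \diver$). This yields $\diver \diver(u\otimes u) = \diver \diver(v\otimes v) + \diver F$ with
\[
F := 2\,(v\cdot\nabla)\nabla\psi + (\nabla\psi\cdot\nabla)\nabla\psi + g\,\nabla\psi,
\]
so that, by uniqueness of the zero-average solution, $p = p_v + (-\Delta)^{-1}\diver F$, where $p_v$ solves $-\Delta p_v = \diver \diver(v\otimes v)$ with zero average, i.e.\ is the hydrodynamic pressure of the divergence-free field $v$.

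To $p_v$ I would then apply the boundary-free result~\eqref{p_double_regular} (see~\cites{SILV,CD18,Is2013}): $p_v\in C^{2\theta}(\T^d)$ when $\theta<\tfrac12$, $p_v\in C^{1,2\theta-1}(\T^d)$ when $\theta>\tfrac12$, with $\|p_v\|\lesssim\|v\|_{C^\theta}^2$. For the correction, every summand of $F$ is the product of an $L^\infty$ function ($v$ or $\nabla\psi$) with $g$ or with a component of $\nabla^2\psi$; by Step~1 one gets $F\in L^q$ when $\theta<\tfrac12$ and $F\in C^{2\theta-1}$ when $\theta>\tfrac12$, with $\|F\|\lesssim(\|u\|_{C^\theta}+\|g\|_*)^2$. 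Since $(-\Delta)^{-1}\diver$ is a Fourier multiplier of order $-1$, it maps $L^q\to W^{1,q}\hookrightarrow C^{0,1-d/q}\subset C^{2\theta}$ (using $1-\tfrac dq\ge2\theta$) and $C^{2\theta-1}\to C^{2\theta}=C^{1,2\theta-1}$, so $(-\Delta)^{-1}\diver F$ lies in the required space with the required bound; adding the two contributions gives the stated estimates, and uniqueness of the zero-average solution is immediate since the difference of two solutions is a zero-average periodic harmonic function.

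The delicate point is the reduction step: one must rewrite the non-divergence-free part of $\diver \diver(u\otimes u)$ as a \emph{single} divergence $\diver F$, with $F$ built only out of the \emph{functions} $v,\nabla\psi,\nabla^2\psi,g$ and never out of the distribution $\nabla u$, so that only one derivative has to be recovered by elliptic regularity. This is where the hypotheses enter: $q\ge\frac{2d}{1-2\theta}$ is what the argument needs to ensure that $F$—and the quadratic scalars $g^2$ and $|\nabla^2\psi|^2$ arising if one fully expands $\diver F$—are regular enough (the binding constraint being $L^{q/2}\hookrightarrow C^{2\theta-1}$, i.e.\ $2d/q\le1-2\theta$) for $(-\Delta)^{-1}$ to land in $C^{2\theta}$, with $\diver u\in C^{2\theta-1}$ playing the analogous role when $\theta>\tfrac12$. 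Conceptually, the quadratic desingularization of the kernel is contained in the cited divergence-free estimate for $p_v$ and enters precisely through its quadratic bound $\|p_v\|\lesssim\|v\|_{C^\theta}^2$: the content of the theorem is that this mechanism survives in the presence of a sufficiently regular divergence.
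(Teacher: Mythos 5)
Your proof is correct and follows essentially the same route as the paper: you perform the same Helmholtz splitting $u = v + \nabla\psi$ with $\Delta\psi = \diver u$ (the paper writes $w = u - \nabla f$ with $\Delta f = \diver u$, so $w$ is your $v$), apply the divergence-free double-regularity estimate of~\cite{CD18} to the solenoidal piece, and treat the rest by Calder\'on--Zygmund plus Sobolev embedding for $\theta<\tfrac12$ and by Schauder for $\theta>\tfrac12$. The only cosmetic difference is that you collect the cross term and the gradient-squared term into a single correction $(-\Delta)^{-1}\diver F$ with $F\in L^q$ (resp.\ $C^{2\theta-1}$), whereas the paper keeps them separate as $p_2$ and $p_3$ and estimates $p_3$ through the embedding $W^{1,q/2}\hookrightarrow C^{2\theta}$; both variants use the restriction $q\ge \frac{2d}{1-2\theta}$ in the same spirit.
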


\subsection{H\"older solutions and turbulence}

H\"older-continuous weak solutions to the Euler equations have attracted a lot of interest in the last decades, mainly because of their natural connection with the  \emph{K41 Theory of Turbulence}~\cite{K41} and the related Onsager's conjecture on \emph{anomalous energy dissipation}. 
Indeed, in 1949, the theoretical physicist Onsager~\cite{Ons49}, while considering solutions $u\in L^\infty ([0,T];C^\theta(\T^d))$, conjectured that $\theta=\frac13$ may be the threshold for the existence of inviscid fluid flows  exhibiting an anomalous dissipation of the kinetic energy. 
He in fact claimed---and actually even mathematically motivated---that kinetic energy dissipation would only happen in the range $\theta<\frac13$, while for $\theta>\frac13$ the existence of such badly behaved solutions would not be possible due to some intrinsic rigidity properties of the equations.  We refer the interested reader to \cites{CET94,Ey94,CCFS08,BDLSV2019,Is2018,DH21,DT19} and references therein for a complete overview about the proof of Onsager's conjecture, and to \cite{F} for a detailed account on the modern theory of fully developed turbulence.

Everything that has been discussed above is known to be true in the absence of boundaries. 
If a physical boundary is present, then the mathematical (and also physical) description of turbulence becomes much more intricate, due to the non-trivial effects made by the boundary itself \cites{BT18,BTW19,RRS18,RRS2018}.  The energy conservation analogous to~\cite{CET94} in the case of a bounded domain was proved in~\cite{BT18}, where the H\"older regularity of the pressure plays a crucial role, see~\cite{BT18}*{Proposition~2.1} for instance.  
However, such a regularity does not easily come as a consequence of the usual Schauder boundary regularity theory for elliptic PDEs. For this reason, we believe that this article gives a solid proof of the pressure regularity which, to our knowledge, is not otherwise precisely traceable in the current mathematical literature.  

\subsection{Organization of the paper}

The paper is organized as follows.
In \cref{s:approximation}, we prove an approximation result for the velocity, see \cref{approx.lemma}. 
\cref{s:representation} is dedicated to the proof of the representation formula given in \cref{p_representation}.
The proof of our main result \cref{t_main}  can be found in \cref{s:t_main_proof}. 
The proof of the  almost double regularity of \cref{t:p_almost_double} is the content of \cref{sec:almostdouble} while \cref{double_torus} for non-zero divergence is proved in \cref{non-zero_div}.
We then end the paper with three appendices. 
In \cref{s:schauder}, we collect all the technical results we need on Schauder regularity theory. In \cref{s:bilin_interp} we recall an useful result about interpolation of bilinear operators.
In \cref{greenfunctionestimates}, we provide a proof of some estimates on the Green--Neumann function.

\section{Approximation of the velocity and reduction to the regular case}
\label{s:approximation}

\subsection{H\"older norms}
We begin with the definition of the norms we are going to use throughout the paper. 
In the following, we let $\N_0=\N\cup\{0\}$, $k\in\N_0$, $\theta\in (0,1)$ and $\beta\in\N_0^k$ be a multi-index.  
Let $\Omega\subset \R^d$ and let $f\colon\Omega\rightarrow \R^m$ for $m\in\N$. We introduce the usual H\"older norms as follows. 
The supremum norm is denoted by $\|f\|_{C^0(\Omega)}=\sup_{x\in \Omega}|f(x)|$. We define the H\"older seminorms 
as
\begin{equation*}
\begin{split}
[f]_{C^k(\Omega)}&=\max_{|\beta|=k}\|D^{\beta}f\|_{C^0(\Omega)}\, ,\\
[f]_{C^{k,\theta}(\Omega)} &= \max_{|\beta|=k}\sup_{x,y\in \Omega,  \,x\neq y}\frac{|D^{\beta}f(x)-D^{\beta}f(y)|}{|x-y|^{\theta}}\, .
\end{split}
\end{equation*}
The H\"older norms are then given by
\begin{eqnarray*}
\|f\|_{C^k(\Omega)}&=&\sum_{j=0}^k[f]_{C^j(\Omega)},\\
\|f\|_{C^{k,\theta}(\Omega)}&=&\|f\|_{C^k(\Omega)}+[f]_{C^{k,\theta}(\Omega)}.
\end{eqnarray*}
In order to shorten the notation, for $k=0$ we simply write $C^\theta(\Omega)$ instead of $C^{0,\theta}(\Omega)$. 

\subsection{Approximation of the velocity}
Now we provide the regular approximation of the velocity $u$ that remains divergence-free and tangential to the boundary. These two constraints are both important in order to get the representation formula \eqref{formula_for_p}.

\begin{lemma}[Approximation of the velocity]\label{approx.lemma} 
Let $d\ge2$ and let $\Omega\subset\R^d$ be a bounded and simply connected domain of class $C^{2,\alpha}$ for some $\alpha>0$. Let $\theta \in (0,1)$ and let $u\in C^{\theta}(\Omega)$ be such that $\diver  u=0$ and $u\cdot n\vert_{\partial\Omega}=0$.  Then, there exists a family $(u^{\varepsilon})_{\eps>0} \subset C^{\infty}(\Omega)\cap C^{1}(\overline\Omega)$ such that
$u^{\varepsilon}\to u\ \text{in}\ C^0(\overline \Omega)$
as $\eps\to0^+$,
$\diver   u^{\varepsilon}=0$ and $u^{\varepsilon}\cdot n \vert_{\partial \Omega}=0
$ for all $\eps>0$, and
\begin{equation*}
\sup_{\eps>0}\|u^{\varepsilon}\|_{C^{\theta}(\Omega)} \leq C \|u\|_{C^{\theta}(\Omega)}
\end{equation*} 
for some constant $C>0$.
\end{lemma}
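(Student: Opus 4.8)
The strategy I would follow is to trade the two structural constraints on $u$ — namely $\diver u=0$ in $\Omega$ and $u\cdot n=0$ on $\partial\Omega$ — for a single \emph{homogeneous} boundary condition on a potential, and only then regularize. The point is that a plain mollification of $u$ near $\partial\Omega$ destroys both properties, whereas a mollification of a potential satisfying a reflection‑friendly boundary condition can be made to preserve them. Concretely: I would (1) produce an antisymmetric matrix potential $A$, one derivative smoother than $u$, with $u=\diver A$ and a suitable linear boundary condition $\Lambda(A)=0$ on $\partial\Omega$; (2) extend and mollify $A$ in a boundary‑adapted way; (3) set $u^\eps:=\diver A^\eps$.

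\textit{Step 1 (potential).} Since $\Omega$ is simply connected, it is classical that a divergence‑free vector field tangent to $\partial\Omega$ admits an antisymmetric matrix potential $A=(A_{ij})$, $A_{ij}=-A_{ji}$, with $u_i=\partial_j A_{ij}$ in $\Omega$; for any such $A$ one automatically has $\diver u=\partial_i\partial_j A_{ij}=0$. I would realize $A$ concretely as the solution of an elliptic boundary value problem — schematically $-\Delta A_{ij}=\partial_i u_j-\partial_j u_i$ in $\Omega$ in a convenient gauge — complemented by a homogeneous boundary condition $\Lambda(A)=0$ on $\partial\Omega$ which is on the one hand exactly equivalent to $u\cdot n|_{\partial\Omega}=0$, and on the other hand linear in the entries of $A$ (in $d=2$ this is simply $A_{12}=0$ on $\partial\Omega$, i.e.\ the stream function normalized to vanish on the boundary). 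As the right‑hand side lies in $C^{\theta-1}(\Omega)$ and $\partial\Omega\in C^{2,\alpha}$, Schauder estimates for this system yield $A\in C^{1,\theta}(\overline\Omega)$ with $\|A\|_{C^{1,\theta}(\overline\Omega)}\le C\|u\|_{C^{\theta}(\Omega)}$. This, together with the normal coordinates used below, is the only place where the $C^{2,\alpha}$ assumption enters, and it is also where the argument must depart from the planar one of~\cite{BT21}, where the potential is obtained for free, with a merely $C^{2}$ boundary, as a line integral of $u^{\perp}$.

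\textit{Steps 2--3 (boundary‑adapted regularization and conclusion).} Fix a tubular neighborhood of $\partial\Omega$ with the associated normal coordinates, of class $C^{1,\alpha}$ since $\partial\Omega\in C^{2,\alpha}$, and use them to flatten $\partial\Omega$ locally. In these coordinates the homogeneous condition $\Lambda(A)=0$ allows each component of $A$ to be extended across the (flattened) boundary by an even or odd reflection, chosen compatibly with $\Lambda$ and with antisymmetry, giving $\hat A\in C^{1,\theta}$ on a neighborhood $\Omega'\supset\overline\Omega$, still antisymmetric, with $\hat A=A$ on $\overline\Omega$ and $\|\hat A\|_{C^{1,\theta}(\overline{\Omega'})}\le C\|u\|_{C^{\theta}(\Omega)}$. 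Mollifying with a kernel respecting this reflection symmetry near $\partial\Omega$, set $A^\eps:=\hat A\ast\rho_\eps$: then $A^\eps\in C^\infty$ on $\overline\Omega$, is antisymmetric, still satisfies the symmetry and hence $\Lambda(A^\eps)=0$ on $\partial\Omega$, one has $\nabla A^\eps\to\nabla A$ uniformly on $\overline\Omega$ (as $\nabla\hat A$ is uniformly continuous), and $\|A^\eps\|_{C^{1,\theta}(\overline\Omega)}\le C\|u\|_{C^{\theta}(\Omega)}$. Finally put $u^\eps:=\diver A^\eps$, i.e.\ $u^\eps_i:=\partial_j A^\eps_{ij}$. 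Then $u^\eps\in C^\infty(\Omega)\cap C^1(\overline\Omega)$; $\diver u^\eps=\partial_i\partial_j A^\eps_{ij}=0$ by antisymmetry; $u^\eps\cdot n=0$ on $\partial\Omega$ because $\Lambda(A^\eps)=0$ there; $u^\eps=\diver A^\eps\to\diver A=u$ in $C^0(\overline\Omega)$ since $\nabla A^\eps\to\nabla A$ uniformly; and
\[
\sup_{\eps>0}\|u^\eps\|_{C^{\theta}(\Omega)}=\sup_{\eps>0}\|\diver A^\eps\|_{C^{\theta}(\Omega)}\le \sup_{\eps>0}\|A^\eps\|_{C^{1,\theta}(\overline\Omega)}\le C\|u\|_{C^{\theta}(\Omega)} .
\]

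The main obstacle is keeping \emph{both} structural properties alive through the regularization. The divergence‑free condition is essentially free once one works with the potential, since it holds for any antisymmetric $A^\eps$. The real difficulty is the tangency $u^\eps\cdot n=0$ on $\partial\Omega$, which cannot survive a naive mollification near the boundary; forcing it to survive is precisely what dictates the two delicate ingredients above — the right choice of the homogeneous boundary condition $\Lambda(A)=0$ at the level of the potential, and the flattening/reflection scheme making $\Lambda(A^\eps)=0$ persist. In $d=2$ this collapses to the classical reflection of the stream function across the straightened boundary; for $d\ge3$ the new and more technical points are the existence and $C^{1,\theta}(\overline\Omega)$‑regularity of the matrix potential with the prescribed boundary condition — which is where simple connectivity and the $C^{2,\alpha}$ smoothness of $\partial\Omega$ are used — and checking that the reflections can be carried out consistently with the antisymmetry of $A$.
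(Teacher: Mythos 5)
Your proposal takes a genuinely different route from the paper, and it contains a real gap in its central step. The paper does \emph{not} build a matrix potential: it invokes the Kato--Mitrea--Ponce--Taylor extension (\cref{extension.lemma}) to get a compactly supported divergence‑free $\tilde u\in C^\theta(\R^d)$, mollifies to obtain $\tilde u_\varepsilon$, and then restores the tangency condition by subtracting $\nabla\varphi^\varepsilon$, where $\varphi^\varepsilon$ solves the harmonic Neumann problem $\Delta\varphi^\varepsilon=0$, $\partial_n\varphi^\varepsilon=\tilde u_\varepsilon\cdot n$. Schauder theory for this Neumann problem (which is where $C^{2,\alpha}$ enters) gives the uniform $C^\theta$ bound, and a compactness argument identifies the limit of $\nabla\varphi^\varepsilon$ as zero. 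The potential‑based strategy you describe is in fact precisely the one the authors sketch in Section~2.3 of the paper as a \emph{conjectural} alternative valid for $C^2$ domains, and they state explicitly that, for $d\ge3$, ``we do not know how to provide a potential $A$'' with the required properties, pointing to the delicacy documented in the Berselli--Longo papers.

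The gap is in your Step~1. You assert that $A$ can be realized ``as the solution of an elliptic boundary value problem --- schematically $-\Delta A_{ij}=\partial_i u_j-\partial_j u_i$ in $\Omega$ --- complemented by a homogeneous boundary condition $\Lambda(A)=0$'' which is ``exactly equivalent'' to $u\cdot n=0$. You never specify $\Lambda$ for $d\ge 3$, and, independently of that choice, nothing in your construction forces $\diver A=u$. If one imposes, say, componentwise Dirichlet conditions $A_{ij}|_{\partial\Omega}=0$ and solves $-\Delta A_{ij}=\partial_i u_j-\partial_j u_i$, then $B:=\diver A-u$ satisfies only $\diver B=0$ and $\Delta B_i=0$ (each component harmonic), which in $d\ge3$ does not imply $B\equiv0$; any gradient of a harmonic function tangent to nothing in particular remains a possible $B$. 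In $d=2$ the scheme closes because $B$ is additionally curl‑free and both $u$ and $\nabla^\perp\Psi$ are tangent to $\partial\Omega$ (thanks to $\Psi|_{\partial\Omega}=0$), so $B$ is the gradient of a harmonic function with zero Neumann data, hence zero. That argument has no $d$‑dimensional analogue in your writeup, and it is precisely what the authors flag as the unresolved point. A secondary issue: you invoke ``Schauder estimates'' with $C^{\theta-1}$ right‑hand side; that is a distributional datum, and the estimate one actually wants ($\|A\|_{C^{1,\theta}}\lesssim\|u\|_{C^\theta}$ with a lower‑order boundary condition) is not the textbook Schauder statement and would need a separate justification. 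Your Steps 2--3 (odd reflection of $A$ in flattened coordinates, mollification, $u^\varepsilon:=\diver A^\varepsilon$) are sound \emph{conditionally} on Step~1 and indeed coincide with what the paper sketches; but without a construction of $A$ in $d\ge3$, the proof is not complete as written.
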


In the proof of \cref{approx.lemma}, we exploit the following extension result for H\"older continuous solenoidal vector fields, see~\cite{KMPT00}*{Section 5}.

\begin{lemma}[Extension lemma]\label{extension.lemma} Let $\Omega$ be a bounded and simply connected domain of class~$C^2$ and let $\theta \in (0,1)$.
If $u \in C^{\theta}(\Omega)$ is such that $\diver  u=0$, then there exists $\tilde{u} \in C^{\theta}(\mathbb{R}^d)$ with compact support such that $\tilde{u}\vert_{\Omega}=u$,  $\diver \tilde u=0$ in $\R^d$, and 
\begin{equation*}
\|\tilde{u}\|_{C^{\theta}(\mathbb{R}^d)} \leq C\|u\|_{C^{\theta}(\Omega)}
\end{equation*}
for some constant $C>0$.  
\end{lemma}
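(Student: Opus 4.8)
The plan is to reduce the problem to the classical construction for solenoidal fields via a vector potential, exploiting the topological hypothesis that $\Omega$ is simply connected. First I would pick a slightly larger smooth bounded domain $\Omega'$ with $\overline\Omega\subset\Omega'$; since the construction in \cite{KMPT00}*{Section~5} is essentially local near the boundary, what matters is to produce \emph{some} divergence-free $C^\theta$ extension onto a neighborhood of $\overline\Omega$ and then cut it off. The cleanest route is to invoke the (scalar, $d=2$) stream function or the (general $d$) vector potential: because $\diver u=0$ and $\Omega$ is simply connected, one can write $u=\diver A$ (or $u=\mathrm{curl}\,A$ when $d=3$) for an antisymmetric matrix-valued potential $A$ that gains one derivative, so $A\in C^{1,\theta}_{\mathrm{loc}}$ on $\Omega$ with the corresponding bound $\|A\|_{C^{1,\theta}}\lesssim\|u\|_{C^\theta}$ after normalizing. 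Then I would extend $A$ itself — an ordinary (not constrained) $C^{1,\theta}$ extension, e.g.\ by a Whitney/Stein extension operator for the $C^2$ domain $\Omega$ — to a compactly supported $\tilde A\in C^{1,\theta}(\R^d)$, and set $\tilde u:=\diver\tilde A$ (resp.\ $\mathrm{curl}\,\tilde A$). By construction $\diver\tilde u=0$ identically on $\R^d$, $\tilde u|_\Omega=u$, $\tilde u$ has compact support, and $\|\tilde u\|_{C^\theta(\R^d)}\le\|\tilde A\|_{C^{1,\theta}(\R^d)}\lesssim\|A\|_{C^{1,\theta}(\Omega)}\lesssim\|u\|_{C^\theta(\Omega)}$.

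The main technical point to check carefully is the existence of the potential $A$ with the claimed gain of regularity and the matching estimate on a domain that is merely $C^2$. In the simply connected case this is standard: one solves $\Delta A=\nabla\wedge u$ (componentwise) with a suitable boundary condition, or uses the Bogovskii/de Rham machinery, and elliptic (Schauder-type) estimates give $A\in C^{1,\theta}$ with $\|A\|_{C^{1,\theta}(\Omega)}\lesssim\|u\|_{C^\theta(\Omega)}+\|u\|_{C^0(\Omega)}\lesssim\|u\|_{C^\theta(\Omega)}$; the compatibility condition needed to recover exactly $u=\diver A$ (and not $u$ plus a harmless curl-free remainder) is precisely what simple connectedness guarantees. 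This is exactly the argument carried out in \cite{KMPT00}*{Section~5}, so in the write-up I would simply cite that reference for the construction and the bound, and add only the short cutoff step above to obtain compact support while preserving the divergence-free condition (cutting off $A$ rather than $u$ is what makes this automatic).

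The hard part, conceptually, is \emph{not} the extension but making sure the three properties survive simultaneously: the pointwise identity $\tilde u|_\Omega=u$, the \emph{global} constraint $\diver\tilde u=0$, and the quantitative $C^\theta$ bound. Extending $u$ directly by a generic Hölder extension operator would destroy the divergence-free condition outside $\Omega$; extending the potential $A$ instead is the device that resolves this, since differentiation commutes with restriction and $\diver\diver$ of an antisymmetric matrix (resp.\ $\diver\,\mathrm{curl}$) vanishes identically regardless of the regularity or the domain. Once this is set up, the Hölder bound is inherited for free from the $C^{1,\theta}$ bound on $\tilde A$, and there is nothing delicate left.
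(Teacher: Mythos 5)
The paper does not actually prove \cref{extension.lemma}: it is stated as an imported result with a pointer to \cite{KMPT00}*{Section~5}, which is also where your write-up ultimately defers. Your sketch of the potential--then--extend--then--divergence device is the right underlying idea, and your observation that one should cut off the antisymmetric potential $\tilde A$ rather than the field $\tilde u$ (so that $\diver\diver(\chi\tilde A)\equiv 0$ holds automatically, since $\chi\tilde A$ is still antisymmetric) is correct and is precisely the mechanism that preserves the divergence constraint across the support boundary.

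If, however, you intend your argument to stand as a proof rather than a pointer, there is a real gap exactly at the step you yourself flag as the ``main technical point'': producing an antisymmetric $A\in C^{1,\theta}(\overline\Omega)$ regular up to the boundary --- not merely $C^{1,\theta}_{\mathrm{loc}}$, which is what you first write, versus the global norm $\|A\|_{C^{1,\theta}(\Omega)}$ you later invoke and actually need for the Whitney/Stein extension --- with $\diver A=u$ and $\|A\|_{C^{1,\theta}(\Omega)}\lesssim\|u\|_{C^\theta(\Omega)}$ on a domain that is only $C^2$. Solving $\Delta A=\mathrm{curl}\,u$ with $u\in C^\theta$ is not a textbook Schauder problem: the source is a distribution of order $-1$, the relevant gain $C^\theta\to C^{1,\theta}$ up to the boundary is a ``one-derivative-less'' estimate requiring structural care, and even then one gets \emph{some} solution of the Poisson system, from which recovering $\diver A=u$ exactly (and not $u$ plus a curl-free remainder) needs a quantitative de~Rham argument at H\"older regularity that is not standard. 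The paper itself singles out this very construction as delicate in its Section~2.3 discussion (there with the additional constraint $A|_{\partial\Omega}=0$, which you do not need, but the up-to-the-boundary regularity issue is already present without it). The cited reference~\cite{KMPT00} settles all of this on Lipschitz domains by harmonic-analysis tools rather than by the Schauder route you outline, so as written your proposal is a faithful heuristic but would need these ingredients supplied to be self-contained.
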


\begin{proof}[Proof of \cref{approx.lemma}] Let $\tilde{u}$ be the extension given by  \cref{extension.lemma} and consider the mollification $\tilde{u}_{\varepsilon} :=\tilde{u}*\rho_{\varepsilon}$ for some approximation of the unity $(\rho_{\varepsilon})_{\varepsilon >0}$. Notice that $\tilde u^\eps$ does not satisfy the required boundary condition in general. Hence, we modify it by considering the solution $\varphi^{\varepsilon}$ (unique up to constants) of the system
\[
    \left\{
    \begin{array}{lcll}
         \Delta \varphi^{\varepsilon} &=&0 &\text{in }\Omega\\
         \partial_{n} \varphi^{\varepsilon}&=& \tilde{u}_\varepsilon \cdot n &\text{on }\partial \Omega.
    \end{array}
    \right. 
\]
Notice that, since $\diver \tilde u_\eps=0$, the \emph{compatibility condition}
$$
0=\int_{\partial \Omega} \tilde u_\eps \cdot n=\int_{\Omega} \diver \tilde u_\eps
$$
is satisfied, and thus the previous boundary value problem admits a solution. 

Since $\Omega$ is of class $C^{2,\alpha}$,  we have $n\in C^{1,\alpha}$ and thus, by Schauder boundary regularity theory (see~\cite{N14}) we get $\varphi^\varepsilon \in C^\infty(\Omega)\cap C^{2,\alpha}(\Omega)\subset C^\infty(\Omega)\cap C^2(\overline \Omega)$. 
Moreover, by  \cref{nardi-1}, we also get
\begin{equation}\label{est_phi_1theta}
    \|\nabla \varphi^{\varepsilon}\|_{C^{\theta}(\Omega)}  \leq C \|\tilde u_\varepsilon\cdot n\|_{C^\theta(\partial\Omega)}\leq C \|\tilde{u}_{\varepsilon}\|_{C^{\theta}(\mathbb{R}^d)}\leq C \|\tilde{u}\|_{C^{\theta}(\mathbb{R}^d)}\leq C \|u\|_{C^\theta(\Omega)},
    \end{equation}
where in the last inequality we used the continuity of the extension operator given by \cref{extension.lemma}.

We now define $u^{\varepsilon} := \tilde{u}_{\varepsilon} - \nabla \varphi^{\varepsilon}$ for all $\eps>0$.
Note that, by definition, $u^\eps\in C^{\infty}(\Omega)\cap C^{1}(\overline \Omega)$, $\diver   u^{\varepsilon} = 0$ in $\Omega$ and $u^{\varepsilon}\cdot n = 0$ on $\partial\Omega$ for all $\eps>0$. It remains to check the convergence $u^{\varepsilon} \to u$ in $C^{0}(\overline\Omega)$ as $\eps\to0^+$. 

We first note that $\tilde{u}_{\varepsilon} \to \tilde{u}$ in $C^0(\mathbb{R}^d)$ as $\eps\to0^+$, which implies $\tilde{u}_{\varepsilon} \to u$ in $C^{0}(\overline \Omega)$ as $\eps\to0^+$, because $\tilde{u}|_{\overline \Omega}=u$. We are thus to show that $\nabla \varphi^{\varepsilon} \to 0$ in $C^{0}(\overline \Omega)$ as $\eps\to0^+$. By~\eqref{est_phi_1theta}, we know that the family $(\varphi^{\varepsilon})_{\varepsilon >0}$ is bounded in $C^{1,\theta}(\Omega)$.
Therefore, we can find a subsequence $\varepsilon_k \to 0$ and a limit function $\varphi \in C^{1,\theta}(\Omega)$ such that $\varphi ^{\varepsilon _k} \to \varphi $ in $C^1(\overline\Omega)$ as $k\to+\infty$. 
Finally, since $\tilde{u}_{\varepsilon}\cdot n \to 0$ in $C^0(\partial \Omega)$ as $\eps\to0^+$, the limit function $\varphi$ solves 
\[
    \left\{
    \begin{array}{rcll}
         \Delta \varphi &=& 0 &\text{in }\Omega\\
         \partial_{n} \varphi &=&0 &\text{on }\partial \Omega.
    \end{array}
    \right. 
\]
As a consequence, $\varphi$ must be a constant function on $\Omega$ and thus $\nabla \varphi^{\varepsilon_k} \to\nabla \varphi \equiv 0$ uniformly on $\overline \Omega$ as $k\to+\infty$.  From~\eqref{est_phi_1theta}, we also get the (uniform in $\eps$) continuity estimate
\[
    \|u^\varepsilon\|_{C^\theta(\Omega)}\leq \|\tilde u_\varepsilon\|_{C^\theta(\Omega)}+ \|\nabla \varphi^{\varepsilon}\|_{C^{\theta}(\Omega)}\leq C\|u\|_{ C^\theta(\Omega)},
\]
concluding the proof. 
\end{proof}

\begin{remark}\label{r:C1alpha_approx}
Notice that, in the previous proof, $\varphi^\eps\in C^{2,\alpha}(\Omega)$, which implies the stronger conclusion $u^\eps\in C^{1,\alpha}(\Omega)\cap C^\infty(\Omega)$.  It follows that, actually, $\diver \diver(u^\eps\otimes u^\eps)=\partial_i u^\eps_j\partial_j u^\eps_i\in C^\alpha (\Omega)$, and thus the Neumann boundary problem for the pressure precisely falls in the standard Schauder's theory with H\"older continuous data.  We however preferred to formulate \cref{approx.lemma} by claiming that $u^\eps\in C^1(\overline \Omega)$ only, since this is the minimal assumption needed to run the rest of the proof. 
\end{remark}

\subsection{A possible strategy to get approximation in the \texorpdfstring{$C^2$}{Cˆ2} case}
\label{remark_C2}

In the proof of \cref{approx.lemma}, the regularity $\partial \Omega \in C^{2,\alpha}$ is only used to deduce that $\varphi^\varepsilon\in C^2(\overline \Omega)$, which in turn implies that $u^\varepsilon\in C^1(\overline \Omega)$. 

It is clear that, in order to get an approximation $(u^\eps)_{\eps}\subset C^1(\overline \Omega)$ with the properties given in \cref{approx.lemma}, one is forced to ensure that $\nabla \varphi^\eps\in C^1(\overline\Omega)$ which, in turn, is a consequence of standard Schauder's boundary regularity estimate provided that $\Omega$ is of class $C^{2,\alpha}$ for some $\alpha>0$, see~\cite{GT}*{Theorem~6.30} for instance.  

Thus, to prove the same approximation under the weaker assumption that $\Omega$ is of class~$C^2$ only, one has to follow a different approach. 
Below we propose a possible  strategy which we believe could be helpful for further developments in this direction.

The idea is to write $u\in C^{\theta}(\Omega)$ in terms of a potential with some precise properties. 
Let us assume the existence of a $d \times d$ matrix $A$ such that  
\begin{equation}\label{propertiesA}
A\in C^{1,\theta}(\Omega),  \quad
A^{T}=-A, 
\quad 
A|_{\partial \Omega}=0,
\quad
\diver A=u.
\end{equation}
Note that the latter condition is clearly compatible with the divergence-free constraint on~$u$, since $\diver \diver A=0$ whenever $A$ is skew-symmetric.  

In order to regularize $A$, one may proceed as follows.
In the interior of $\Omega$, one can simply mollify the potential $A$.
At the boundary $\Omega$, one first reduces to the case $\Omega$ is the half-space $\{x\in\R^d : x_d>0\}$ through local charts, and then extend the local expression of $A$ for $x_d<0$ in an odd-symmetric way.
Let us now call such a local extension $\tilde A$. 
At least in a local chart, one can mollify $\tilde A$ with a radial smooth convolution kernel, getting a new potential $\tilde A_\eps$. Since $\tilde A$ was odd with respect to $x_d=0$ (in each local system of coordinates), from $A|_{\partial \Omega}=0$ we get $\tilde A_\eps|_{x_d =0}=0$.
All in all, via a suitable smooth partition of the unity subordinated to the local boundary charts and the interior, one can go back to the original coordinates in $\Omega$ and just glue all those local potentials together, getting a new $d\times d$ matrix $A^\eps$ in (an open neighborhood of) $\overline\Omega$ such that 
\begin{equation}
\label{propertiesA-eps}
A^\eps\in C^2(\overline\Omega),
\quad
(A^\eps)^T=-A^\eps,
\quad
A^\eps|_{\partial\Omega}=0,
\end{equation}  
the first property being a consequence of the fact that the domain~$\Omega$ is of class~$C^2$.  

At this point, one can define 
$$u^\eps=\diver A^\eps\in C^1(\overline \Omega)$$
and we claim that the family $(u^\eps)_{\eps>0}$ provides the desired approximation. 
By construction, we clearly have that $u^\eps\to u$ in $C^0(\overline \Omega)$ as $\eps\to0^+$ and $\|u^{\varepsilon}\|_{C^{\theta}(\Omega)} \leq C \|u\|_{C^{\theta}(\Omega)}$ for some constant $C>0$ which does not depend on $\eps>0$.
In addition, from~\eqref{propertiesA-eps}, one immediately gets that $\diver u^\eps=0$.
Finally, one also gets $u^\eps\cdot n=0$ on $\partial\Omega$
thanks to the following straightforward computation
\[
    n\cdot \diver A^\eps=n_i\, \partial_j A^\eps_{ji}=n_i\alpha^\eps_{ji}n_j=-n_i\alpha^\eps_{ij}n_j=-n\cdot \diver A^\eps \quad \text{on } \partial \Omega,
\]
where we used that $A^\eps$ is skew-symmetric and that $\partial \Omega$ is a level set for $A^\eps$, so that $\partial_j A^\eps_{ji}=\alpha^\eps_{ji} n_j$ for some suitable constants $\alpha^\eps_{ji}$, being $\nabla A^\eps_{ij}|_{\partial \Omega}$ parallel to $n$ for all~$i,j$.

At the present moment, we do not know how to provide a potential $A$ as in~\eqref{propertiesA} in a general $d$-dimensional domain $\Omega$ of class $C^2$. 
Having in mind the closely related works~\cites{BP19,BP20}, the existence of such a potential seems quite delicate and this is why we instead decided to rely on the the simpler argument leading to  \cref{approx.lemma}.  
Nonetheless, in the planar case~$d=2$, one can overcome the problem and provide the suitable approximation~$(u^\eps)_{\eps>0}$ by simply relying on the well-known \emph{stream function} $\Psi\in C^{1,\theta}(\Omega)$ vanishing on~$\partial \Omega$, see~\cite{BT21}*{Lemma~1} for the details.

\section{Proof of \texorpdfstring{\cref{p_representation}}{the representation formula}}

\label{s:representation}

In this section, we prove \cref{p_representation}, that is, we establish the representation formula~\eqref{formula_for_p}. 
To this aim, let $u\in C^\infty(\Omega)\cap C^1(\overline \Omega)$ be such that $\diver u=0$ and $u\cdot n|_{\partial\Omega}=0$ and let $p$ be a weak solution of~\eqref{p_problem}.  
We start by rewriting the right-hand side of \eqref{p_problem} as 
\[
    \diver \diver (u\otimes u)=\partial_{ij}(u_iu_j)=\partial_ju_i\,\partial_iu_j=\partial_j(u_i-u_i(x))\,\partial_iu_j=\partial_{ij}\big((u_i-u_i(x))\,u_j \big),
\]
where $x\in\Omega$ is any given point.
It is well known (see~\cite{S16}*{Theorem~3.39} for instance) that $p$ can be represented via the formula
\begin{equation}
\label{ping}
p(x)-\frac{1}{|\Omega|}\int_{\Omega}p(y)\,dy=\int_{\Omega} G(x,y) \,\partial_{ij}\big( (u_i-u_i(x))\,u_j\big)\,dy+\int_{\partial \Omega} G(x,y)\, u_i\,u_j\,\partial_j n_i\,d\sigma(y),
\end{equation}
where $G=G(x,y)$ is the \emph{Green--Neumann function} on $\Omega$ as defined in \cref{greenfunctionestimates} and $d\sigma$ is the surface measure on $\partial \Omega$.

Integrating by parts the first term in the above representation, we get
\begin{equation}
\label{pong}
\begin{split}
\int_{\Omega} G(x,y)\, &\partial_{ij}\big( (u_i-u_i(x))\,u_j(y)\big)\,dy
=
\int_{\partial \Omega} G(x,y)\,\partial_{j}\big( (u_i-u_i(x))\,u_j\big)\,n_i\,d\sigma(y) \\
&\quad
-\int_{\Omega}\partial_{y_i}G(x,y) \,\partial_j\big( (u_i-u_i(x))\,u_j\big)\,dy\\
&=\int_{\partial \Omega} G(x,y)\,u_j\,\partial_{j}u_i\,n_i\,d\sigma(y)-\int_{\partial \Omega} \partial_{y_i} G(x,y)\,\big( (u_i-u_i(x))\,u_j\big)\,n_j\,dy\\
&\quad+\int_{\Omega} \partial_{y_iy_j}G(x,y) \,(u_i-u_i(x))\,u_j\,dy\\
&=-\int_{\partial \Omega}  G(x,y)\, u\otimes u : \nabla n\,d\sigma(y)+\int_{\Omega} \partial_{y_iy_j}G(x,y)\, (u_i-u_i(x))\,u_j\,dy,
\end{split}
\end{equation}
where in the last equality we used that $u\cdot n=0$ and $\left((u\cdot \nabla)u \right)\cdot n=-u\otimes u :\nabla n$ on~$\partial \Omega$,  as already done to recover the boundary condition in~\eqref{reassembl_bound_cond}. 
Thus, by combining~\eqref{ping} with~\eqref{pong}, we obtain
\begin{align*}
p(x)-\frac{1}{|\Omega|}\int_{\Omega}p(y)\,dy&=\int_{\Omega} \partial_{y_iy_j}G(x,y)(u_i-u_i(x))u_j\,dy
\end{align*}
proving the desired representation formula~\eqref{formula_for_p}.

\section{Proof of \texorpdfstring{\cref{t_main}}{the main result}}
\label{s:t_main_proof}

In this section, we prove \cref{t_main}, dealing with the two cases $\theta\leq \frac12$ and $\theta>\frac12$ separately.
We remark that, since in \cref{t_main} the pressure is assumed to have zero average, we just need to prove a bound on the seminorm $[p]_{C^\theta(\Omega)}$.
This is in fact a simple consequence of the Poincare-type inequality
$$
\|f\|_{C^0(\Omega)}\leq C[f]_{C^\theta(\Omega)}
$$
valid for all $f\colon\Omega\to\R$ such that $\int_{\Omega}f(x)\,dx=0$, where $C>0$ is a constant depending on~$\theta$ and~$\Omega$ only.
  
We start by noticing that, for every fixed $\eps>0$, by the classical theory there exists  a unique zero average solution $p^\eps\in C^{1,\alpha}(\Omega)$ to \eqref{p_problem}, with right-hand side given by the approximation $u^\varepsilon$ of \cref{approx.lemma} in place of $u$.

In the case $\theta\leq \frac12$, as a consequence of the estimates on the Green--Neumann function established in \cref{greenfunctionestimates}, we show that  such $p^\varepsilon$ satisfies the estimate in~\eqref{item:main<12} of \cref{t_main} uniformly with respect to $\varepsilon>0$.  Therefore, since $C^{\theta}(\Omega)$ is compactly embedded in $C^0(\overline \Omega)$, up to subsequences as $\varepsilon\to0^+$, we get the existence of limit function~$p$ still satisfying the estimate    in \cref{t_main}~\eqref{item:main<12} which is also a weak solution of~\eqref{p_problem} in the sense of~\eqref{p_weaksol}.

In the case $\theta>\frac12$, we notice that the normal derivative of the pressure on the boundary is a well-defined $C^{2\theta-1}$ function.
Thus, we can just extend~$u$ to the whole space $\R^d$ and then exploit the double regularity proved in  \cite{CD18}*{Proposition 3.1} together with some standard Schauder regularity estimates.

\subsection{Case \texorpdfstring{$\theta \leq \frac12$}{below 1/2}}

In order to keep the notation short, we write $u$ in place of the approximation $u^\eps$ given by \cref{approx.lemma}, that is, we have $u\in C^{1}(\overline \Omega) \cap C^{\infty}(\Omega)$, $\diver u=0$ in $\Omega$ and $u\cdot n =0$ on $\partial\Omega$. 

Since we assumed that the pressure is average-free, by \cref{p_representation}, we can write 
\[
    p(x)=\int_{\Omega} \partial_{y_iy_j}G(x,y)\, (u_i(y)-u_i(x))\,u_j(y)\,dy
\]
for any given $x\in\Omega$.
Now let $x_1, x_2 \in \Omega$, $\bar x := \frac{x_1+x_2}{2}$ and $\lambda := |x_1-x_2|$.
Since by~\cite{CD18} we already know that $p\in C^\theta$ in the interior of $\Omega$, we can just focus on the case in which one between the two points, say $x_1$ is close to $\partial \Omega$. Without loss of generality, up to possibly enlarge the constants appearing in the final estimate, we can assume that $\lambda \leq \lambda_0$ for some $\lambda_0>0$ so small that such that $B(\bar x,\lambda_0) \cap \partial\Omega$ is the graph of a $C^{2,\alpha}$ function. 
In addition, possibly choosing $\lambda_0>0$ smaller, we can assume that, for all $\lambda\le\lambda_0$, the intersection $B(\bar x,\lambda)\cap \Omega$ is a simply connected open set with Lipschitz boundary (so that, in the following computations, the Divergence Theorem always applies).
We start by writing 
\begin{align*}
    p(x_1)-p(x_2)&=\int_{\Omega} \partial_{y_iy_j}G(x_1,y) \,(u_i(y)-u_i(x_1))\,u_j(y)\,dy \\
    & \quad- \int_{\Omega} \partial_{y_iy_j}G(x_2,y)\, (u_i(y)-u_i(x_2))\,u_j(y)\,dy = A + B,
\end{align*}
where 
\begin{align*}
    A &:= \int_{\Omega\cap B(\bar x, \lambda)} \partial_{y_iy_j}G(x_1,y)\, (u_i(y)-u_i(x_1))\,u_j(y)\,dy \\ 
    &\quad- \int_{\Omega\cap B(\bar x, \lambda)} \partial_{y_iy_j}G(x_2,y)\, (u_i(y)-u_i(x_2))\,u_j(y)\,dy
\end{align*}
and
\begin{align*}
    B &:= \int_{\Omega\setminus B(\bar x, \lambda)} \partial_{y_iy_j}G(x_1,y) \,(u_i(y)-u_i(x_1))\,u_j(y)\,dy \\
    &\quad- \int_{\Omega\setminus B(\bar x, \lambda)} \partial_{y_iy_j}G(x_2,y)\, (u_i(y)-u_i(x_2))\,u_j(y)\,dy.
\end{align*}

We start by estimating the  terms in $A$.
We provide the detailed computations for the case $d\geqslant 3$, then case  $d=2$ being similar with minor differences due to the different expression of the Newtonian potential and, consequently, of the Green--Neumann function.

By using the Green--Neumann function estimates in~\eqref{eq.pointwiseOrder12}, we can bound 
\begin{align*}
    \bigg|\int_{\Omega\cap B(\bar x, \lambda)} \partial_{y_iy_j}G(x_k,y)\, &(u_i(y)-u_i(x_k))\,u_j(y)\,dy \,\bigg|
    \\
&\lesssim \|u\|_{C^{\theta(\Omega)}}\|u\|_{C^0(\Omega)} \int_{\Omega\cap B(\bar x, \lambda)} \frac{dy}{|x_k-y|^{d-\theta}}
\end{align*}
for $k=1,2$.
Since
\begin{equation*}
    \int_{\Omega\cap B(\bar x, \lambda)} \frac{dy}{|x_k-y|^{d-\theta}}\leq \int_{ B(x_k, 2\lambda)} \frac{dy}{|x_k-y|^{d-\theta}} \lesssim   \lambda^{\theta},
\end{equation*}
we get 
\begin{equation}
    \label{eq.termI}
    |A| \lesssim \lambda ^{\theta}\, \|u\|_{C^{\theta}(\Omega)}\|u\|_{C^0(\Omega)}. 
\end{equation}
To handle the terms in $B$, we write
\begin{align*}
    B &= \int_{\Omega\setminus B(\bar x, \lambda)} \big(\partial_{y_iy_j}G(x_1,y)\,-\partial_{y_iy_j}G(x_2,y)\big)\,(u_i(y)-u_i(x_1))\,u_j(y) \,dy \\
    &\quad+\int_{\Omega\setminus B(\bar x, \lambda)}\partial_{y_iy_j}G(x_2,y)\,(u_i(x_2)-u_i(x_1))\,u_j(y)\,dy \\ 
    & =: B_1+B_2.  
\end{align*}
Since
$$\left|\partial_{y_iy_j}\big(G(x_1,y)-G(x_2,y)\big)\right| \lesssim \frac{|x_1-x_2|}{|\bar x - y|^{d+1}}$$ 
thanks to~\eqref{eq.pointwiseDifference}, we can estimate~$B_1$ as
\[
    |B_1| \lesssim \lambda \,\|u\|_{C^{\theta}(\Omega)}\|u\|_{C^0(\Omega)} \int_{\Omega \setminus B(\bar x, \lambda)} \frac{1}{|\bar x -y|^{d+1-\theta}}\,dy.
\]
Since
\begin{equation*}
    \int_{\Omega \setminus B(\bar x, \lambda)} \frac{dy}{|\bar x -y|^{d+1-\theta}}\,dy \lesssim \lambda^{\theta -1}, 
\end{equation*}
we conclude that
\begin{equation}
    \label{eq.termII}
    |B_1| \lesssim \lambda ^{\theta}\, \|u\|_{C^{\theta}(\Omega)}\|u\|_{C^0(\Omega)}.
\end{equation}

We are thus left to estimate $B_2$.
To this aim, we integrate by parts  another time to further desingularize the Green--Neumann kernel (otherwise, we would only obtain a \emph{logarithmic} $C^{\theta}$ regularity). 
By the Divergence Theorem, we can write
\begin{align*}
    B_2&= - \int_{\Omega\setminus B(\bar x,\lambda)} \partial_{y_i} G(x_2,y)\,(u_i(x_2)-u_i(x_1))\,\partial_ju_j(y)\,dy \\
    &+ \int_{\partial(\Omega \setminus B(\bar x, \lambda))} \partial_{y_i} G(x_2,y)\,(u_i(x_2)-u_i(x_1))\,u_j(y)\,n_j(y)\,d\sigma (y),
\end{align*}
where $\sigma$ stands for the $(d-1)$-dimensional Hausdorff measure on the boundary.
Now, since $u$ is divergence-free, the first integral in $B_2$ vanishes.  Moreover, we can decompose 
$$\partial(\Omega \setminus B(\bar x, \lambda)) = (\partial\Omega \setminus B(\bar x, \lambda)) \cup (\Omega \cap \partial B(\bar x, \lambda)),$$
where the two sets on the right-hand side are disjoint. 
The integral on $\partial\Omega \setminus B(\bar x, \lambda)$ vanishes because $u\cdot n =0$ on $\partial\Omega$.
We hence have to estimate 
\[
    B_2=\int_{\Omega \cap \partial B(\bar x, \lambda)} \partial_{y_i} G(x_2,y)\,(u_i(x_2)-u_i(x_1))\,u_j(y)\,n_j(y)\,d\sigma (y).
\]
By using \eqref{eq.pointwiseOrder12}, we get
\[
    |B_2|\lesssim \lambda ^{\theta}\, \|u\|_{C^{\theta}(\Omega)}\|u\|_{C^0(\Omega)}\int_{\Omega \cap \partial B(\bar x, \lambda)} \frac{d\sigma (y)}{|x_2-y|^{d-1}}.
\]
Since $|x_2-y| \gtrsim \lambda$ whenever $y \in \partial B(\bar x, \lambda)$, we can bound 
\[
    \int_{\Omega \cap \partial B(\bar x, \lambda)} \frac{d\sigma (y)}{|x_2-y|^{d-1}} \lesssim \frac{1}{\lambda^{d-1}}\,\sigma(\partial B(\bar x, \lambda))\lesssim 1,
\]
so that 
\begin{equation}
    \label{eq.termIII}
    |B_2| \lesssim \lambda ^{\theta}\, \|u\|_{C^{\theta}(\Omega)}\|u\|_{C^0(\Omega)}.
\end{equation}
Gathering~\eqref{eq.termI}, \eqref{eq.termII} and \eqref{eq.termIII} and recalling that $\lambda=|x_1-x_2|$, we finally obtain that
\[
    |p(x_1)-p(x_2)| \lesssim \|u\|_{C^{\theta}(\Omega)}\|u\|_{C^0(\Omega)} \,|x_1-x_2|^{\theta}
\]
as soon as $|x_1-x_2| \leq \lambda_0$.
The proof of \cref{t_main}~\eqref{item:main<12} is thus complete.

\subsection{Case \texorpdfstring{$\theta>\frac12$}{theta>1/2}}

Let $\tilde u\in C^\theta(\R^d)$ be the divergence-free and compactly supported extension of $u$ given by  \cref{extension.lemma}. We  let $\tilde p$ be the unique potential-theoretic solution of 
$$
-\Delta \tilde p= \diver \diver (\tilde u\otimes \tilde u) \quad \text{in } \R^d.
$$
Then, by \cite{CD18}*{Proposition~3.1}, we get 
\begin{equation}\label{est_ptilde}
\|\tilde p\|_{C^{1,2\theta-1}(\R^d)}\leq C \|\tilde u\|_{C^\theta(\R^d)}^2\leq C \| u\|_{C^\theta(\Omega)}^2.
\end{equation}
In particular, the normal derivative $\partial_n\tilde p$ is of class $C^{2\theta-1}$ on $\partial \Omega$. Now, the function 
$$q:=p -\tilde p + \int_{\Omega} \tilde p\,dx$$ 
satisfies the \emph{compatibility condition} $\int_{\partial \Omega}\partial_n q\,d\sigma=0$ by the definition and is thus the unique (zero-average) solution of
\[
    \left\{
    \begin{array}{rcll}
        -\Delta q&=&0&\text{in } \Omega\\
        \partial_n q&=& u\otimes u : \nabla n-\partial_n \tilde p & \text{on }\partial \Omega.
    \end{array}
    \right.
\]
Note that the Neumann boundary datum satisfies $$u\otimes u : \nabla n-\partial_n \tilde p\in C^{\min\{\alpha,2\theta-1\}}(\partial \Omega),$$ since $u\in C^\theta(\Omega)\subset C^{2\theta-1}(\Omega)$ and $\nabla n\in C^{\alpha}(\partial\Omega)$,  because $\Omega$ is of class $C^{2,\alpha}$.  
Thus, from \cref{nardi-1}, we get
\begin{equation}\label{est_q}
\|q\|_{C^{1,\min\{\alpha,2\theta-1\}}(\Omega)}\leq C\left( \|u \otimes u\|_{C^\theta(\partial \Omega)} + \|\partial_n\tilde p\|_{C^{2\theta-1}(\partial \Omega)}\right)\leq C \| u\|_{C^\theta(\Omega)}^2,
\end{equation}
where in the last inequality we also used~\eqref{est_ptilde}.  
Rewriting 
$$p=q+\tilde p-\int_{\Omega} \tilde p\,dx$$ and exploiting~\eqref{est_ptilde} and~\eqref{est_q}, we hence get that
$$
\|p\|_{C^{1,\min\{\alpha,2\theta-1\}}(\Omega)}\leq \|q\|_{C^{1,\min\{\alpha,2\theta-1\}}(\Omega)}+\|\tilde p\|_{C^{1,2\theta-1}(\Omega)}\leq C\|u\|_{C^\theta(\Omega)},
$$
concluding the proof of \cref{t_main}~\eqref{item:main>12}.

\section{Proof of \texorpdfstring{\cref{t:p_almost_double}}{Theorem 1.3}}\label{sec:almostdouble}

The goal is to apply \cref{t:bil_interp}. For notational convenience, we set 
$$
C_\sigma^\theta(\Omega)=\left\{ u\in C^\theta(\Omega) \, : \, \diver u=0 \text{ and } u\cdot n|_{\partial \Omega}=0 \right\}
$$
and similarly $C_\sigma^{1,\theta}(\Omega)$.
We define the bilinear operator $T=T(u,v)$ which maps every couple of vector fields $u,v\in C_\sigma^\theta(\Omega)$ to the unique zero-average solution of 
\begin{equation}\label{bilinear_operat}
\left\{\begin{array}{rcll}
-\Delta T(u,v) &=& \diver \diver (u\otimes v) & \text{in } \Omega\\[2mm]
\partial_n T(u,v)\, &=&u\otimes v : \nabla n & \text{on } \partial\Omega,
\end{array}\right.
\end{equation}
which, as for \eqref{p_weaksol},  in the weak formulation reads as 
\begin{equation}\label{weak_bil_operat}
-	\int_{\Omega} T(u,v)\, \Delta \varphi \,dx+\int_{\partial \Omega} T(u,v) \,\partial_n \varphi\,dx = \int_{\Omega} u\otimes v : H \varphi\,dx,
\qquad \text{for all } \varphi \in C^2(\overline \Omega).
\end{equation}
Let $\eps$, $\theta$ and $ \alpha$ be as in the statement of \cref{t:p_almost_double}.
Let $\beta>0$ be sufficiently small such that $\beta<\min\{\alpha,\theta,\eps \}$. By the very same proof of \cref{t_main}\eqref{item:main<12}, we get
\begin{equation}
\label{est_interp_double_1}
\|T(u,v)\|_{C^\beta(\Omega)}\leq C \|u\|_{C^0(\Omega)} \|v\|_{C^\beta(\Omega)}\leq C \|u\|_{C^\beta(\Omega)} \|v\|_{C^\beta(\Omega)}.
\end{equation}
Moreover, if $v\in C_\sigma^{1,\beta}(\Omega)$, we can rewrite
$$
\diver \diver (u\otimes v)=\diver (u\cdot \nabla v),
$$
which, together with \cref{nardi-1}, gives
\begin{equation}
\label{est_interp_double_2}
\|T(u,v)\|_{C^{1,\beta}(\Omega)}\leq C \|u\|_{C^\beta(\Omega)} \|v\|_{C^{1,\beta}(\Omega)}.
\end{equation}
Since $u\otimes v :H\varphi =v\otimes u :H\varphi $, by looking at the weak formulation \eqref{weak_bil_operat} we get that the bilinear operator $T=T(u,v)$ is symmetric, so that
\begin{equation}
\label{est_interp_double_3}
\|T(v,u)\|_{C^{1,\beta}(\Omega)}\leq C \|u\|_{C^\beta(\Omega)} \|v\|_{C^{1,\beta}(\Omega)}.
\end{equation}
Now, if both $u,v\in  C_\sigma^{1,\beta}(\Omega)$,  we can write
$$
\diver \diver (u\otimes v)=\partial_i u_j\partial_jv_i
$$
and thus, by classical Schauder's estimates, we can infer
\begin{equation}
\label{est_interp_double_4}
\|T(u,v)\|_{C^{2,\beta}(\Omega)}\leq C \|u\|_{C^{1,\beta}(\Omega)} \|v\|_{C^{1,\beta}(\Omega)}.
\end{equation}
Notice that, in order to obtain \eqref{est_interp_double_4}, we need to have $\beta<\alpha$, from which $\nabla n\in C^{1,\alpha}(\partial \Omega)\subset C^{1,\beta}(\partial \Omega)$ and thus the Neumann boundary datum in \eqref{bilinear_operat} enjoys 
$$
u\otimes v:\nabla n\in C^{1,\beta}(\partial \Omega).
$$
Consequently, by putting \eqref{est_interp_double_1}, \eqref{est_interp_double_2}, \eqref{est_interp_double_3} and \eqref{est_interp_double_4} all together, we can apply \cref{t:bil_interp} with $X_1=C_\sigma^\beta(\Omega)$, $X_2=C_\sigma^{1,\beta}(\Omega)$, $Y_1=C_\sigma^\beta(\Omega)$ and $Y_2=C_\sigma^{2,\beta}(\Omega)$, getting
\begin{equation}
\label{est_interp_double_5}
\|T(u,u)\|_{\left( C^\beta(\Omega),C^{2,\beta}(\Omega)\right)_{\theta-\beta,\infty}}\leq C \|u\|^2_{\left( C^\beta_\sigma (\Omega),C^{1,\beta}_\sigma (\Omega)\right)_{\theta-\beta,\infty}}
\end{equation}
as soon as $\theta>\beta$. 
We refer to \cref{s:bilin_interp} for the precise definition of the interpolation spaces used above, and to the classical monographs~\cites{L,BL} for more general discussions about linear interpolation.

Now recall that the Besov space $B^\gamma_{p,q}$ (see for instance \cite{BL} for the precise definition) coincides with the usual H\"older space whenever $p=q=\infty$ and $\gamma$ is not an integer. Thus,  by \cite{BL}*{Theorem~6.4.5}, it holds 
$$ \left( C^\beta(\Omega),C^{2,\beta}(\Omega)\right)_{\frac12,\infty}=C^{1,\beta}(\Omega),
$$ 
which in particular also shows the hypothesis \eqref{half_interp_hyp} in \cref{t:bil_interp} is satisfied. Moreover, again by \cite{BL}*{Theorem~6.4.5}, we deduce that
\begin{equation}
\label{interp_C0-C2} 
\left( C^\beta(\Omega),C^{2,\beta}(\Omega)\right)_{\theta-\beta,\infty}=C^{2\theta-\beta}(\Omega).
\end{equation}
We underline that~\cite{BL}*{Theorem~6.4.5} is stated in the whole space $\R^d$, from which \eqref{interp_C0-C2} easily follows by the existence of a linear extension operator which is also continuous between H\"older spaces.  Similarly, we have 
\begin{equation}
\label{interp_C0-C1} 
\left( C^\beta_\sigma(\Omega),C_\sigma^{1,\beta}(\Omega)\right)_{\theta-\beta,\infty}=C^{\theta}_\sigma(\Omega).
\end{equation}
Finally, by plugging \eqref{interp_C0-C2} and \eqref{interp_C0-C1} into \eqref{est_interp_double_5}, we obtain 
$$
\|T(u,u)\|_{C^{2\theta-\beta}(\Omega)}\leq C \|u\|^2_{C^\theta(\Omega)},
$$
from which \eqref{est_p_almost_double} follows since $\beta<\eps$.

\section{Proof of \texorpdfstring{\cref{double_torus}}{double regularity with arbitrary divergence on the torus}}
\label{non-zero_div}

In this section, we prove \cref{double_torus}.
To keep the notation short, we set $g=\diver u$ and we let $f$ be the unique zero-average solution of the problem 
\[
    \Delta f= g\quad \text{in } \T^d.
\]
We hence set $v=\nabla f$. 
We can now  deal with the two cases $\theta<\frac12$ and $\theta>\frac12$ separately.

\subsection{The case \texorpdfstring{$\theta<\frac12$}{theta<1/2}}

Without loss of generality, we can assume that $q<+\infty$. Since $g\in L^q(\T^d)$, by the standard Calder\'on--Zygmund theory we have
\begin{equation}\label{v_CZ}
\|v\|_{W^{1,q}(\T^d)}\leq C \| g\|_{L^q(\T^d)}
\end{equation}
which, in combination with the usual Sobolev embedding, being $q\geq \frac{2d}{1-2\theta}>\frac{d}{1-\theta}$, gives
\begin{align}
\|v\|_{C^\theta(\T^d)}\leq C \|v\|_{W^{1,q}(\T^d)}\leq C \| g\|_{L^q(\T^d)}\label{v_Ctheta}.
\end{align}
Now, rewriting 
\[
    u\otimes u=(u-v)\otimes (u-v)+(u-v)\otimes v+v\otimes (u-v)+v\otimes v,
\]
we can decompose the pressure as 
\[
    p=p_1+p_2+p_3,
\]
where the $p_i$'s are the unique zero-average solutions of 
\begin{align*}
-\Delta p_1 &=\diver \diver (w\otimes w),\\
-\Delta p_2&=2 \diver ((w\cdot \nabla) v),\\
-\Delta p_3 &=\diver \diver (v\otimes v),
\end{align*}
where we set $w=u-v$. 
Notice that, by definition of~$v$, we have $\diver w=0$ and, thanks to~\eqref{v_Ctheta}, $w\in C^\theta(\T^d)$.  

We now estimate the $C^{2\theta}$ norm of each $p_i$, $i=1,2,3$, separately.  
First, by~\cite{CD18}*{Theorem~1.1}, we have 
\begin{equation}
\label{est_p1}
\|p_1\|_{C^{2\theta}(\T^d)}\leq C \|w\|^2_{C^\theta(\T^d)}\leq C \left(\|u\|^2_{C^\theta(\T^d)}+\|v\|^2_{C^\theta(\T^d)} \right)\leq C  \left(\|u\|^2_{C^\theta(\T^d)}+\|g\|^2_{L^q(\T^d)} \right)
\end{equation}
thanks to~\eqref{v_Ctheta}.
Moreover, again by standard Calder\'on--Zygmund estimates, we can infer
\begin{align*}
\|p_2\|_{W^{1,q}(\T^d)}\leq C\|(w\cdot \nabla) v\|_{L^q(\T^d)}\leq C\|w\|_{C^\theta(\T^d)}\|v\|_{W^{1,q}(\T^d)}\leq C  \left(\|u\|^2_{C^\theta(\T^d)}+\|g\|^2_{L^q(\T^d)} \right)
\end{align*}
which, together with the Sobolev embedding $W^{1,q}(\T^d)\subset C^{2\theta}(\T^d)$ valid for $q\geq \frac{d}{1-2\theta}$, gives
\begin{equation}
\label{est_p2}
\|p_2\|_{C^{2\theta}(\T^d)}\leq C\|p_2\|_{W^{1,q}(\T^d)}\leq C \left(\|u\|^2_{C^\theta(\T^d)}+\|g\|^2_{L^q(\T^d)} \right).
\end{equation}
Finally, in a similar way, we have 
\[
    \|p_3\|_{W^{1,\frac{q}{2}}(\T^d)}\leq C\|v\otimes v\|_{W^{1,\frac{q}{2}}(\T^d)}\leq C\|v\|_{W^{1,q}(\T^d)}^2 \leq C \| g\|_{L^{q}(\T^d)}^2
\]
which, again in combination with the Sobolev embedding $W^{1,\frac{q}{2}}(\T^d)\subset C^{2\theta}(\T^d)$ valid for $q\geq \frac{2d}{1-2\theta}$, leads to
\begin{equation}\label{est_p3}
\|p_3\|_{C^{2\theta}(\T^d)}\leq C \|p_3\|_{W^{1,\frac{q}{2}}(\T^d)}\leq C \| g\|_{L^{q}(\T^d)}^2.
\end{equation}
Combining~\eqref{est_p1}, \eqref{est_p2} and~\eqref{est_p3} all together, we complete the proof of \cref{double_torus}~\eqref{item:torus<1/2}.

\subsection{The case \texorpdfstring{$\theta>\frac12$}{theta>1/2}}

By standard Schauder estimates, we infer that
\begin{equation}
\label{v_schauder}
\|v\|_{C^{1,2\theta-1}(\T^d)}\leq C\|g\|_{C^{2\theta-1}(\T^d)}.
\end{equation}
We now split $p=p_1+p_2+p_3$ exactly as in the previous case and estimate the H\"older norm of each piece separately.  
First, by \cite{CD18}*{Theorem~1.1}, together with~\eqref{v_schauder}, we can bound
\[
    \|p_1\|_{C^{1,2\theta-1}(\T^d)}\leq C\left(\|u\|^2_{C^\theta(\T^d)}+\|v\|^2_{C^{\theta}(\T^d)} \right)\leq C\left(\|u\|^2_{C^\theta(\T^d)}+\|g\|^2_{C^{2\theta-1}(\T^d)} \right).
\]
Moreover, again by standard Schauder estimates, we have
\begin{align*}
\|p_2\|_{C^{1,2\theta-1}(\T^d)}&\leq C \| (w\cdot \nabla) v\|_{C^{2\theta-1}(\T^d)} \leq C \| w \|_{C^{2\theta-1}(\T^d)} \| v \|_{C^{1,2\theta-1}(\T^d)}\\
&\leq C\left(\|u\|^2_{C^\theta(\T^d)}+\|g\|^2_{C^{2\theta-1}(\T^d)}  \right),
\end{align*}
where the last inequality follows from the embedding $C^\theta(\T^d)\subset C^{2\theta-1}(\T^d)$. 
Finally, once again by Schauder estimates, we can write
\[
    \|p_3\|_{C^{1,2\theta-1}(\T^d)}\leq C\|v\|^2_{C^{1,2\theta-1}(\T^d)}\leq C\|g\|^2_{C^{2\theta-1}(\T^d)}.
\]
The validity of \cref{double_torus}~\eqref{item:torus>1/2} then follows by combining the above estimates and the proof is complete.

\appendix 

\section{Schauder regularity estimates}

\label{s:schauder}

\subsection{Elliptic regularity estimates}

For the reader's convenience, below we state some well-known elliptic regularity estimates that are used throughout the paper.

We start with the following local regularity estimates. 
For the proofs, we refer the reader to \cite{GT}*{Theorem~4.15 and Theorem~6.26}. 

\begin{theorem}[Local estimates]\label{thm.ellipticRegularity}
Let $\alpha\in(0,1)$, $f\in C^\alpha(B(0,2))$ and $g\in C^{1,\alpha}(\Gamma(0,2))$, where 
\begin{equation*}
\Gamma(0,2)=\partial B^+(0,2)\cap\{x_d=0\}, 
\qquad
B^+(0,2)=B(0,2)\cap\{x_d>0\}.
\end{equation*}

\begin{enumerate}[(i)]

\item\label{item:elliptic_ball}
If $\Delta u = f$ in $B(0,2)$, then
    \begin{equation}    
\label{eq.schauderInterior}
        \|u\|_{C^{2,\alpha}(B(0,1))} \leq C_d \left( \|u\|_{L^{\infty}(B(0,2))} + \|f\|_{C^{\alpha}(B(0,2))}\right),
    \end{equation}
where $C_d>0$ is a dimensional constant.

\item\label{item:elliptic_half-ball}
Let $A\in C^{1,\alpha}(B(0,2);\R^{d\times d})$ be a uniformly elliptic matrix with
\begin{equation*}
A\xi\cdot\xi\ge\nu|\xi|^2
\qquad
\xi\in\R^d,\ \nu>0,
\end{equation*}
and let $b\in C^{1,\alpha}(\Gamma(0,2);\R^d)$ be such that $b(x)\cdot\mathrm{e}_d \neq 0$ for all $x\in \Gamma(0,2)$.
If $u\in C^{2,\alpha}(B(0,2)\cup\Gamma(0,2))$ is a solution of
    \[
    \left\{ 
    \begin{array}{rcll}
        \diver  (A\nabla u)  & = & f &\text {in } B^+(0,2)\\[1mm]
        b\cdot\nabla u & = & g &\text{on } \Gamma(0,2), 
    \end{array}
    \right. 
    \]
then
    \begin{equation}
        \label{eq.schauderBoundary}
        [u]_{C^{2,\alpha}(B^+(0,1))}\leq C\left(\|u\|_{L^{\infty}(B^+(0,2))} +\|f\|_{C^{\alpha}(B^+(0,2))} + \|g\|_{C^{1,\alpha}(\Gamma(0,2))}\right),  
    \end{equation}
where the constant $C>0$ depends on $\nu>0$ and the $C^{1,\alpha}$ norms of~$A$ and~$b$ only.
\end{enumerate}
\end{theorem}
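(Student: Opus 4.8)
The plan is to establish the interior bound \eqref{eq.schauderInterior} first by classical potential theory and then to bootstrap it to the boundary bound \eqref{eq.schauderBoundary} by a freezing-of-coefficients argument exploiting the flatness of $\Gamma(0,2)$; both estimates are \emph{a priori} estimates, the solution being already assumed regular. For \eqref{eq.schauderInterior} I would write $u=w+h$ on $B(0,3/2)$, where $w(x)=\int_{B(0,2)}\Phi(x-y)\,f(y)\,dy$ is the Newtonian potential of $f$, with $\Phi$ the fundamental solution of $\Delta$, so that $\Delta w=f$ in $B(0,2)$ and $h:=u-w$ is harmonic there. The one genuinely nontrivial point is the Hölder bound for the second derivatives of the Newtonian potential, $\|w\|_{C^{2,\alpha}(B(0,3/2))}\le C_d\bigl(\|f\|_{C^0(B(0,2))}+[f]_{C^\alpha(B(0,2))}\bigr)$, which I would obtain by the standard splitting of the kernel $\partial_{ij}\Phi$ into a singular near-field piece, handled using the mean-zero cancellation of $\partial_{ij}\Phi$ on spheres together with the Hölder continuity of $f$ (subtracting $f(x)$), and a smooth far-field piece estimated directly. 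For the harmonic part, differentiating the mean value property gives $\|\partial^k h\|_{C^0(B(0,1))}\lesssim_{d,k}\|h\|_{C^0(B(0,3/2))}$, and $\|h\|_{C^0(B(0,3/2))}\le\|u\|_{C^0(B(0,2))}+\|w\|_{C^0}\lesssim\|u\|_{C^0(B(0,2))}+\|f\|_{C^0(B(0,2))}$, so that summing the two contributions yields \eqref{eq.schauderInterior} (for $d=2$ one uses the logarithmic $\Phi$, with no other change).

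For \eqref{eq.schauderBoundary}, since $\Gamma(0,2)\subset\{x_d=0\}$ is already flat, I would argue by freezing and reflection. Rewriting $\diver(A\nabla u)=f$ as $a_{ij}(0)\partial_{ij}u=f-(\partial_j a_{ij})\partial_i u+\bigl(a_{ij}(0)-a_{ij}(x)\bigr)\partial_{ij}u$ and the oblique condition as $b(0)\cdot\nabla u=g+\bigl(b(0)-b(x)\bigr)\cdot\nabla u$ on $\Gamma$, the frozen errors $a_{ij}(0)-a_{ij}(x)$ and $b(0)-b(x)$ have small sup-norm and small $C^\alpha$-seminorm on a small half-ball $B^+_\rho$ centred on $\Gamma$, by the uniform continuity of $A$ and $b$; this is the standard device for absorbing the variable-coefficient part into the left-hand side once a model estimate is available, the first-order term $(\partial_j a_{ij})\partial_i u$ being lower order. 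The model problem is the constant-coefficient oblique problem $a_{ij}(0)\partial_{ij}v=F$ in $B^+$, $b(0)\cdot\nabla v=G$ on $\Gamma$; a linear change of variables preserving $\{x_d>0\}$ normalises the principal part to $\Delta$ while keeping the boundary vector transversal, so it suffices to treat $\Delta v=F$ in $B^+$, $b_0\cdot\nabla v=G$ on $\Gamma$ with $b_0\cdot\mathrm e_d\neq0$ constant, which one solves via the explicit Green/Poisson kernels of the half-space oblique-derivative problem; their second-derivative Hölder estimate — the boundary analogue of the Newtonian-potential estimate above — gives $[\partial^2 v]_{C^\alpha(B^+(0,1))}\lesssim\|v\|_{C^0(B^+)}+\|F\|_{C^\alpha(B^+)}+\|G\|_{C^{1,\alpha}(\Gamma)}$, and in the conormal special case (where $G$ is the co-normal derivative) one may instead reduce, more cheaply, to the interior estimate by even reflection across $\Gamma$. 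Combining the model estimate with the smallness of the frozen errors on small half-balls, using the interpolation inequality $[\partial u]_{C^0}\le\delta[\partial^2 u]_{C^\alpha}+C_\delta\|u\|_{C^0}$ to swallow the lower-order terms, and covering $B^+(0,1)$ by finitely many such half-balls, one arrives at \eqref{eq.schauderBoundary} with constants depending only on $\nu$ and on the $C^{1,\alpha}$ norms of $A$ and $b$.

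The main obstacle is this boundary estimate, and within it the model half-space problem. The oblique datum $G$ sits at exactly the borderline class $C^{1,\alpha}$, so one cannot afford a cheap extension of $G$ and must prove the sharp second-derivative Hölder estimate for the genuinely oblique half-space kernel; this is precisely where the transversality $b\cdot\mathrm e_d\neq0$ is used in an essential way, guaranteeing both the existence of that kernel and its correct decay. The perturbation bookkeeping is also more delicate than in the interior because the boundary operator is perturbed as well, so the covering must be made of half-balls centred on $\Gamma$ and the interpolation applied on them. Everything else — the near-field/far-field kernel splitting, the reflection, the interpolation, the covering — is routine. An essentially equivalent alternative for both parts is the Campanato-space argument: one shows $\partial^2 u\in\mathcal L^{2,d+2\alpha}\cong C^{0,\alpha}$ by comparing $u$, on each ball (resp.\ half-ball centred on $\Gamma$), with the solution of the frozen constant-coefficient problem carrying the same Dirichlet (resp.\ oblique) data, whose $\partial^2$-oscillation decays because its tangential derivatives again solve the homogeneous problem; iterating on dyadic (half-)balls gives the Campanato bound, hence the Hölder bound.
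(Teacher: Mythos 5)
The paper does not prove \cref{thm.ellipticRegularity} itself; it simply cites \cite{GT}*{Theorem~4.15 and Theorem~6.26}, which are precisely the classical interior Schauder estimate for Poisson's equation and the boundary Schauder estimate for the oblique-derivative problem. Your sketch --- Newtonian potential plus harmonic remainder for \eqref{item:elliptic_ball}, and coefficient-freezing reduced to the constant-coefficient half-space oblique model together with the perturbation, interpolation and covering steps for \eqref{item:elliptic_half-ball} --- is a correct outline of exactly the arguments underlying those references, so the approaches coincide.
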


The following result can be seen as a `one-derivative-less' counterpart of the Schauder estimates for the Poisson problem with Neumann boundary condition established in~\cite{N14}.
We also refer the reader to~\cite{L13}*{Section~4} for more general results in this direction.

\begin{theorem}[Global estimates with Neumann boundary condition]
\label{nardi-1}
Let $\alpha\in(0,1)$ and let $\Omega\subset\R^d$ be a bounded simply connected open set of class $C^{1,\alpha}$.
Let $g\in C^\alpha(\partial\Omega)$, $F\in C^{\alpha}(\Omega;\R^d)$ be such that 
\begin{equation*}
-\int_{\partial\Omega} g\,d\sigma(x)=\int_{\partial \Omega} F\cdot n\,d\sigma (x).
\end{equation*} 
There exists a solution $u\in C^{1,\alpha}(\Omega)$ (unique up to an additive constant) of the problem
\begin{equation*}
   \left\{ 
    \begin{array}{rcll}
        -\Delta u &= & \diver F &\text {in}\ \Omega\\[1mm]
\partial_nu & = & g &\text{on}\ \partial\Omega. 
    \end{array}
    \right. 
\end{equation*} 
Moreover, every solution of this problem verifies the estimate
\begin{equation*}
\left\|\,u-\frac1{|\Omega|}\int_\Omega u\,dx\,\right\|_{C^{1,\alpha}(\Omega)}
\le
C(d,\alpha,\Omega)\left( \|g\|_{C^\alpha(\partial\Omega)}+ \|F\|_{C^\alpha(\Omega)}\right).
\end{equation*}
\end{theorem}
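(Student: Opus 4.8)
The plan is to combine a soft existence argument with an a priori Schauder-type estimate obtained by localization and a Campanato-space analysis adapted to the conormal boundary condition. First I would set up the weak formulation: $u\in H^1(\Omega)$ with $\int_\Omega u\,dx=0$ solves the problem weakly if
\[
\int_\Omega \nabla u\cdot\nabla\varphi\,dx = \int_\Omega F\cdot\nabla\varphi\,dx + \int_{\partial\Omega} g\,\varphi\,d\sigma
\]
for all $\varphi\in H^1(\Omega)$. The compatibility condition guarantees that the right-hand side annihilates constants, so it descends to a bounded linear functional on the quotient $H^1(\Omega)/\R$, and coercivity on that space follows from the Poincaré–Wirtinger inequality; Lax–Milgram then yields a unique zero-average weak solution, and any two solutions of the problem differ by a constant by the energy identity. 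Since the equation is in divergence form with bounded data, De Giorgi–Nash–Moser theory gives $u\in L^\infty(\Omega)$ together with $\|u-\bar u\|_{L^\infty(\Omega)}\le C\big(\|F\|_{C^0(\Omega)}+\|g\|_{C^0(\partial\Omega)}\big)$, which will serve as the lower-order term in the final estimate.

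Next I would reduce the $C^{1,\alpha}$ regularity to an a priori bound for smooth solutions. Mollifying a compactly supported extension of $F$ and a smooth approximation of $g$ (adjusting $g$ by an additive constant to keep the compatibility condition, which makes sense because $\partial\Omega\in C^{1,\alpha}$ ensures $n\in C^\alpha(\partial\Omega)$ and $d\sigma$ is well defined), one obtains smooth solutions $u_k$ of the corresponding problems. If one proves the a priori estimate $\|u_k-\bar u_k\|_{C^{1,\alpha}(\Omega)}\le C\big(\|F_k\|_{C^\alpha(\Omega)}+\|g_k\|_{C^\alpha(\partial\Omega)}\big)$ with $C$ independent of $k$, then Arzelà–Ascoli together with the uniqueness from the first step let one pass to the limit and conclude.

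For the a priori estimate I would argue in two regimes. \emph{Interior}: on balls $B(x_0,2r)\Subset\Omega$ the equation reads $\Delta u=\diver F$; writing $u$, up to a harmonic correction, as the Newtonian potential of a cutoff of $\diver F$ and using that the second derivatives of the Newtonian potential act boundedly on $C^\alpha$, one gets $[Du]_{C^\alpha(B(x_0,r))}\le C\big(r^{-1-\alpha}\|u\|_{L^\infty(B(x_0,2r))}+[F]_{C^\alpha(B(x_0,2r))}\big)$, exactly the classical Schauder estimate of \cite{GT}*{Theorem~4.15} lowered by one derivative. \emph{Boundary}: near $\partial\Omega$ I would flatten the boundary with a $C^{1,\alpha}$ diffeomorphism $\Phi$; since $D\Phi\in C^\alpha$, the transformed problem on a half-ball has the divergence form $\diver(A\nabla v)=\diver\tilde F$ with $A\in C^\alpha$ uniformly elliptic and conormal condition $(A\nabla v)\cdot\mathrm e_d=\tilde g$ on the flat part, $\tilde g\in C^\alpha$. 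Rewriting this as $\diver(A\nabla v-\tilde F)=0$ and freezing the coefficient matrix at a boundary point, one compares $v$ with the solution of the constant-coefficient conormal problem, extended by even reflection across $\{x_d=0\}$, to obtain a Campanato-type decay estimate for $\int_{B_\rho^+}|\nabla v-(\nabla v)_{B_\rho^+}|^2$; Campanato's iteration lemma then upgrades this to $\nabla v\in C^\alpha$ near the boundary with a quantitative bound. Patching the interior and boundary estimates with a finite cover and a partition of unity, and absorbing the resulting lower-order norms via the interpolation inequality $\|u\|_{C^1(\Omega)}\le\varepsilon[u]_{C^{1,\alpha}(\Omega)}+C_\varepsilon\|u\|_{C^0(\Omega)}$ and the $L^\infty$ bound above, yields the claimed global estimate.

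The delicate point is the boundary estimate: one is forced to work with the genuinely variable-coefficient divergence-form operator $\diver(A\nabla\,\cdot\,)$ with $A$ only Hölder continuous, because $\partial\Omega\in C^{1,\alpha}$ produces exactly $C^\alpha$, and not better, coefficients after flattening, so one cannot reduce to the Laplacian and quote \cite{N14} directly. Carrying out the Campanato freezing argument for the \emph{conormal} (Neumann-type) boundary condition — in particular the correct even reflection of the comparison function and the control of the boundary oscillation of the data $\tilde g$ — is where the real work lies; alternatively one may invoke the oblique-derivative theory of \cite{L13}*{Section~4}, which covers precisely this class of problems.
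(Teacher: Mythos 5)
The paper's proof is a two-liner: it quotes~\cite{V22}*{Theorem 1.2 and Corollary 1.5} for the a priori estimate with an additional $\|u\|_{C^0(\Omega)}$ on the right-hand side, and then removes that lower-order term by the standard compactness/contradiction argument, as in~\cite{N14}. Your proposal develops the full Schauder theory from scratch: Lax--Milgram for existence, De Giorgi--Nash--Moser for the $L^\infty$ bound, interior estimates via the Newtonian potential, and a Campanato-type boundary estimate after flattening and freezing the coefficients of the transformed operator. This is a genuinely different and essentially self-contained route; what it buys is independence from the external reference and a \emph{constructive} absorption of the lower-order term (via the De Giorgi--Nash--Moser $L^\infty$ bound combined with the interpolation inequality $\|u\|_{C^1}\le\eps[u]_{C^{1,\alpha}}+C_\eps\|u\|_{C^0}$) rather than a non-quantitative compactness argument. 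The price is that the boundary Campanato step for the variable-coefficient conormal problem with $A\in C^\alpha$ only — in particular the correct constant-coefficient comparison (a change of variables taking the frozen matrix to the identity while keeping the half-space, or the even reflection trick) — is substantially more work than a citation; alternatively, as you note, one can appeal to~\cite{L13}, which shortcuts that part.

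One real, though easily repairable, slip: your weak formulation
\begin{equation*}
\int_\Omega \nabla u\cdot\nabla\varphi\,dx = \int_\Omega F\cdot\nabla\varphi\,dx + \int_{\partial\Omega} g\,\varphi\,d\sigma
\end{equation*}
does not encode the equation $-\Delta u=\diver F$ with $\partial_n u=g$. Formally integrating by parts backwards gives $-\Delta u=-\diver F$ with the boundary data $\partial_n u=g+F\cdot n$, and testing with $\varphi\equiv1$ yields $\int_{\partial\Omega}g\,d\sigma=0$ rather than the stated compatibility condition $-\int_{\partial\Omega}g\,d\sigma=\int_{\partial\Omega}F\cdot n\,d\sigma$. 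The correct formulation reads
\begin{equation*}
\int_\Omega \nabla u\cdot\nabla\varphi\,dx = -\int_\Omega F\cdot\nabla\varphi\,dx + \int_{\partial\Omega}\bigl(g+F\cdot n\bigr)\varphi\,d\sigma,
\end{equation*}
for which $\varphi\equiv1$ gives exactly the claimed compatibility. With this fix the Lax--Milgram step goes through as you describe; the rest of your argument is unaffected.
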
 
\begin{proof}
From \cite{V22}*{Theorem 1.2 and Corollary 1.5} we get a solution $u\in C^{1,\alpha}(\Omega)$ such that 
\[
    \|u\|_{C^{1,\alpha}(\Omega)}\leq C\left(\|g\|_{C^\alpha(\partial\Omega)}+ \|F\|_{C^\alpha(\Omega)}+ \|u\|_{C^{0}(\Omega)} \right).
\]
Removing the $\|u\|_{C^{0}(\Omega)} $ from the right-hand side of the previous estimate is a standard contradiction argument, see for instance \cite{N14}*{First proof of Theorem 4.1}.
\end{proof}

We also recall some local Calder\'on--Zygmund type estimates for Neumann elliptic problems. 

\begin{theorem}[Local $L^p$ regularity estimates]\label{thm.LpLocalRegularity} 
Let $p\in(1,+\infty)$, $\rho \in (1,2]$ and $f \in L^p(B(0,2))$.

\begin{enumerate}[(i)]
    \item \label{item:interiorLp} If $-\Delta u = f$ in $B(0,2)$ then 
    \begin{equation}
        \label{eq.interiorCZ}
        \|u\|_{W^{2,p}(B(0,1))} \leq C(d,\rho)\left( \|u\|_{L^p(B(0,\rho))} + \|f\|_{L^p(B(0,\rho))}\right). 
    \end{equation}
    \item \label{item:boundaryLp} Let $A\in C^{1,\alpha}(B(0,2);\R^{d\times d})$ be a uniformly elliptic matrix with
\begin{equation*}
A\xi\cdot\xi\ge\nu|\xi|^2
\qquad
\xi\in\R^d,\ \nu>0,
\end{equation*}
and let $b\in C^{1,\alpha}(\Gamma(0,2);\R^d)$ be such that $b(x)\cdot\mathrm{e}_d \neq 0$ for all $x\in \Gamma(0,2)$. Let $g\in W^{1-\frac{1}{p},p}(\Gamma(0,2))$. If $u \in W^{2,p}(B(0,2))$ is a solution of 
\[
    \left\{ 
    \begin{array}{rcll}
        \diver  (A\nabla u)  & = & f &\text {in } B^+(0,2)\\[1mm]
        b\cdot\nabla u & = & g &\text{on } \Gamma(0,2), 
    \end{array}
    \right. 
\]
then 
\begin{equation}
    \label{eq.boundaryCZ}
    \|u\|_{W^{2,p}(B(0,1))} \leq C(d,\rho, \nu) \left(\|u\|_{L^p(B(0,\rho))}+\|f\|_{L^p(B(0,1))} + \|g\|_{W^{1-\frac{1}{p},p}(\Gamma(0,\rho))}\right).
\end{equation}
\end{enumerate}
\end{theorem}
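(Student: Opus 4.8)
The plan is to reduce both parts to the classical Calder\'on--Zygmund theory for the Laplacian. Part \eqref{item:interiorLp} is the classical interior $L^p$ estimate: with a cutoff $\eta\in C^\infty_c(B(0,\rho))$ equal to $1$ near $\overline{B(0,1)}$, one solves $-\Delta w=\eta f$ on $\R^d$ via the Newtonian potential, so that the singular-integral bound gives $\|D^2w\|_{L^p(\R^d)}\le C\|f\|_{L^p(B(0,\rho))}$, while $v:=u-w$ is harmonic on a neighbourhood of $\overline{B(0,1)}$ and hence $\|D^2v\|_{L^p(B(0,1))}\le C\|v\|_{L^p(B(0,\rho))}\le C(\|u\|_{L^p(B(0,\rho))}+\|f\|_{L^p(B(0,\rho))})$; adding the two estimates and absorbing $\|\nabla u\|_{L^p}$ through the interpolation inequality $\|\nabla u\|_{L^p}\le\delta\|D^2u\|_{L^p}+C_\delta\|u\|_{L^p}$ gives \eqref{eq.interiorCZ} (alternatively, quote \cite{GT}*{Theorem~9.11} with $L=\Delta$).

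For \eqref{item:boundaryLp} the boundary portion $\Gamma(0,2)$ is already flat, and I would run the standard freezing-of-coefficients scheme. The first ingredient is the constant-coefficient model problem $\diver(A_0\nabla u)=f$ in $\R^d_+$, $\beta_0\cdot\nabla u=g$ on $\{x_d=0\}$, with $A_0$ symmetric positive definite and $\beta_0$ constant with $\beta_0\cdot\mathrm{e}_d\neq0$ (this transversality being exactly the complementing/Lopatinskii--Shapiro condition): it has an explicit half-space Green's and Poisson kernel, and differentiating the solution formula twice expresses $D^2u$ as Calder\'on--Zygmund operators applied to $f$ plus boundary singular integrals applied to an extension of $g$; since $W^{1-1/p,p}(\{x_d=0\})$ is precisely the trace space of $W^{2,p}(\R^d_+)$ at the level of normal derivatives, one obtains $\|D^2u\|_{L^p(\R^d_+)}\le C(\|f\|_{L^p(\R^d_+)}+\|g\|_{W^{1-1/p,p}(\{x_d=0\})})$ with $C=C(d,p,\nu)$. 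One then splits the given operator and boundary operator as this model at a fixed boundary point plus a perturbation, $\diver(A\nabla u)=\diver(A(0)\nabla u)+\diver((A-A(0))\nabla u)$ and $b\cdot\nabla u=b(0)\cdot\nabla u+(b-b(0))\cdot\nabla u$; on a half-ball $B^+_r$ of small radius the coefficient differences are small in $C^0$ by continuity, so the leading perturbation $(A-A(0)):D^2u$ is absorbed into the left-hand side, the remaining contributions are first order and absorbed by interpolation, and a finite covering of $B^+(0,1)$ by such small balls (interior ones controlled by \eqref{item:interiorLp}, boundary ones by the model estimate) together with a short iteration yields \eqref{eq.boundaryCZ}.

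I expect the oblique boundary operator to be the main obstacle: unlike the interior coefficients, $b$ cannot be frozen \emph{for free}, since controlling $(b-b(0))\cdot\nabla u$ in the trace norm $W^{1-1/p,p}(\Gamma)$ costs essentially a full derivative of $u$; this is handled by combining the $C^0$-smallness of $b-b(0)$ on small balls with the interpolation identity $W^{1-1/p,p}=(L^p,W^{1,p})_{1-1/p,p}$, so that the coefficient multiplying $\|D^2u\|_{L^p}$ in the resulting inequality can still be made strictly less than $\tfrac12$. Carrying this bookkeeping out cleanly, and checking that the lower-order terms generated by $\nabla A$ and $\nabla b$ are genuinely subordinate, is the delicate point; alternatively one may bypass the argument entirely by invoking the $L^p$ theory of Agmon--Douglis--Nirenberg, or Lieberman's results for oblique derivative problems (cf.\ \cite{L13}), for which \eqref{eq.boundaryCZ} on a flat boundary is a special case.
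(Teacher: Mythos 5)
Your proof of part~\eqref{item:interiorLp} coincides with the paper's (which simply cites \cite{GT}*{Theorem~9.11}), but for part~\eqref{item:boundaryLp} you take a genuinely different route. The paper proceeds in two short steps: first it \emph{homogenizes} the oblique boundary datum by subtracting a lifting $G\in W^{2,p}(B^+(0,2))$ of $g$ (granted by the trace theorem, using $b\cdot\mathrm e_d\neq0$), reducing to $g=0$; then it handles the homogeneous problem by the same even/odd \emph{reflection} extension across $\Gamma$ used in \cite{GT}*{Theorem~9.13} for the Dirichlet case, so that the half-ball estimate follows from the already-established interior $L^p$ estimate. You instead rebuild the boundary estimate from scratch via \emph{freezing of coefficients}: an explicit constant-coefficient oblique half-space model (Green/Poisson kernels, Calder\'on--Zygmund), coefficient perturbation with $C^\alpha$-smallness on small half-balls, a covering, and absorption via interpolation. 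Both are standard and correct, and you correctly single out the true subtlety of your route, which the paper's reflection argument sidesteps: the perturbation $(b-b(0))\cdot\nabla u$ lives in the fractional trace norm $W^{1-1/p,p}(\Gamma)$, so moving it to the right-hand side costs essentially a full derivative; your fix via $W^{1-1/p,p}=(L^p,W^{1,p})_{1-1/p,p}$ together with $\|b-b(0)\|_{L^\infty(B_r^+)}\lesssim r^\alpha$ is exactly what makes the absorption work. Your alternative of invoking Agmon--Douglis--Nirenberg or Lieberman's oblique-derivative $L^p$ theory (cf.\ \cite{L13}) is also perfectly adequate and in the same spirit as the paper's textbook citations. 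In short: the paper's argument is shorter because it piggybacks on the Dirichlet $L^p$ theory after lifting and reflecting, whereas yours is longer but more self-contained and does not require the homogenization step; each is a legitimate way to establish~\eqref{eq.boundaryCZ}.
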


\begin{proof} 
For the proof of~\eqref{item:interiorLp}, we refer to \cite{GT}*{Theorem~9.11}. To prove~\eqref{item:boundaryLp} in the case $g=0$, one can first use an extension procedure as in \cite{GT}*{Theorem~9.13}, where the same estimate is derived for Dirichlet boundary conditions. 
The general case can be reduced to the case $g=0$ by considering $u-G$ instead of $u$, where $G \in W^{2,p}(B^+(0,2))$ satisfies $\partial_n G = g$ and $\|G\|_{W^{2,p}(B^+(0,2))}\leq C(d,p) \|g\|_{W^{1-1/p,p}(\Gamma (0,2))}$. 
\end{proof}

We will also need the following global $L^p$ regularity estimate. 
For the proof, we refer the reader to~\cite{Am20}.

\begin{theorem}[Global $L^p$ regularity estimates]\label{thm.LpGlobalRegularity}
Let $\Omega\subset\R^d$ be a $C^{2,\alpha}$ bounded domain for some $\alpha\in(0,1)$. Let $p\in(1,+\infty)$ and let $f \in L^p(\Omega)$ be such that $\diver f\in L^1(\Omega)$.  
If $u$ is a weak solution of 
\begin{equation}
    \label{eq.NeumanGreen}
    \left\{ 
    \begin{array}{rcll}
        -\Delta u  & = & \diver f - \dfrac1{|\Omega|}\displaystyle\int_\Omega\diver f\,dx &\text {in } \Omega\\[3mm]
        \partial_n u & = & 0 &\text{on } \partial\Omega, 
    \end{array}
    \right. 
\end{equation}
then 
\begin{equation}
        \label{eq.globalCZ}
        \|u\|_{W^{1,p}(\Omega)} \leq C(p,\Omega) \left(\|f\|_{L^p} + 1\right). 
    \end{equation}
\end{theorem}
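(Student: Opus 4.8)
The statement is the global $W^{1,p}$ Calder\'on--Zygmund estimate for the Neumann Laplacian in divergence form, i.e.\ the ``one-derivative-less'' counterpart of the classical $W^{2,p}$ theory (precisely the relation that \cref{nardi-1} bears to the Schauder estimates of~\cite{N14}). The plan is to reduce, through a finite cover of $\overline\Omega$ and a subordinate partition of unity $\{\chi_i\}$, to two model estimates: an \emph{interior} one, where $-\Delta(\chi_i u)=\diver(\chi_i f)+(\text{lower order})$ in a ball with $\chi_i f\in L^p$; and a \emph{boundary} one, where after a $C^{2,\alpha}$ diffeomorphism flattening $\partial\Omega$ one obtains $\diver(A\nabla(\chi_i u))=\diver F+(\text{lower order})$ in $B^+(0,2)$ together with the homogeneous conormal condition $A\nabla(\chi_i u)\cdot\mathrm{e}_d=0$ on $\Gamma(0,2)$, where $A\in C^{1,\alpha}$ (far more than is needed, since continuity of $A$ suffices for $L^p$ theory) and $F\in L^p$. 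The cut-offs and the change of variables produce error terms controlled in $L^p$ by $\|u\|_{L^p}$, $\|f\|_{L^p}$ and the constant $c:=\frac1{|\Omega|}\int_\Omega\diver f$, all of which I will absorb at the end.

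For the interior model estimate I would split $\chi_i u=w+v$, where $w$ is the Newtonian potential on $\R^d$ of $\diver(\chi_i f)$ (with $\chi_i f$ extended by zero): then $\nabla w$ is a composition of Riesz transforms applied to $\chi_i f$, so $\|\nabla w\|_{L^p(\R^d)}\lesssim\|f\|_{L^p}$ by standard singular integral theory, while $v$ is harmonic in a slightly smaller ball, whence $\|\nabla v\|_{L^p(B(0,1))}\lesssim\|v\|_{L^p}$ by interior mean-value bounds; combining these yields the desired local bound. For the boundary model estimate I would first freeze the leading coefficient $A$ at the centre of $\Gamma(0,2)$ (admissible by continuity of $A$) and remove the homogeneous conormal condition on $\{x_d=0\}$ by an even-type reflection adapted to the frozen matrix $A(0)$, reducing to a whole-space problem treated as in the interior case; the genuine $C^{0,\alpha}$ dependence of $A$ is then recovered by the usual freezing and summation over small cubes, exactly as in the proof of \cref{thm.LpLocalRegularity}\eqref{item:boundaryLp} but with one derivative less. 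I expect the delicate point to be that $\chi_i f$ only has a normal trace on $\Gamma(0,2)$ in a weak sense: this is precisely where the hypothesis $\diver f\in L^1(\Omega)$ enters, ensuring that $f$ possesses a well-defined distributional normal trace on $\partial\Omega$ and that this trace does not obstruct the estimate.

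Summing the finitely many local estimates yields $\|u\|_{W^{1,p}(\Omega)}\le C\big(\|f\|_{L^p(\Omega)}+\|u\|_{L^p(\Omega)}+\|\diver f\|_{L^1(\Omega)}\big)$, and it remains to remove the lower-order term $\|u\|_{L^p(\Omega)}$. Working with the canonical zero-average weak solution I would argue by contradiction: if the estimate failed, there would be $u_k$ with $\|u_k\|_{W^{1,p}(\Omega)}=1$ and $\|f_k\|_{L^p(\Omega)}+\|\diver f_k\|_{L^1(\Omega)}\to0$, and the compact embedding $W^{1,p}(\Omega)\hookrightarrow\hookrightarrow L^p(\Omega)$ would provide a subsequence converging in $L^p$ to a zero-average weak solution of the homogeneous Neumann problem, hence to $0$; the local estimates would then force $\|u_k\|_{W^{1,p}(\Omega)}\to0$, a contradiction. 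Since the constant $c$ contributes nothing to $u$ once the Green--Neumann function is suitably normalised (it is exactly the compatibility correction of the right-hand side), the $\|\diver f\|_{L^1}$ term may be dropped and the harmless ``$+1$'' in~\eqref{eq.globalCZ} merely plays the role of a safe normalisation. An alternative route avoiding charts would be to represent $u$ directly by $u(x)=\int_\Omega G(x,y)(\diver f(y)-c)\,dy$, integrate by parts once in $y$, and bound the resulting singular integral (kernel $\nabla_x\nabla_y G$, a Calder\'on--Zygmund kernel in view of the estimates of \cref{greenfunctionestimates}) together with the boundary layer potential generated by the normal trace of $f$; in either approach, the treatment of this boundary normal-trace term and the correct use of $\diver f\in L^1$ are the only genuinely non-routine points.
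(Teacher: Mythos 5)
The paper does not prove \cref{thm.LpGlobalRegularity} at all: it is stated with a pointer to~\cite{Am20} (Amrouche, Conca, Ghosh, Ghosh, \emph{Calc.\ Var.\ PDE} 2020) and nothing more. There is therefore no in-paper argument to compare yours against, and your review has to be on the merits of the sketch itself.

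Your main route---a finite cover of $\overline\Omega$, a subordinate partition of unity, reduction to an interior model problem (handled via the Newtonian potential and Riesz transforms) and a flattened-boundary model problem (handled by freezing the leading coefficient and even reflection across $\{x_d=0\}$), followed by absorption of $\|u\|_{L^p}$ through a compactness/contradiction argument using the zero-average normalisation---is the standard blueprint for this class of estimates and is basically correct as a plan, though the freezing-and-reflection step and the manipulation of the cut-off commutators (in particular rewriting $2\nabla\chi_i\cdot\nabla u$ as a divergence plus lower order, so as not to re-introduce first derivatives of $u$ on the right) need to be spelled out for the argument to close.

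However, your proposed ``alternative route avoiding charts'' is circular in the context of this paper: you want to represent $u$ via the Green--Neumann function and invoke the pointwise kernel bounds of \cref{l:greenestimate}, but the proof of \cref{l:greenestimate} relies precisely on \cref{coro.GlobalWeakLp}, which in turn is a consequence of \cref{thm.LpGlobalRegularity}. One cannot use the Green--Neumann estimates to prove the very $W^{1,p}$ bound on which they rest. Lastly, a small conceptual slip: the hypothesis $\diver f\in L^1(\Omega)$ is present chiefly so that the compatibility constant $c=\frac1{|\Omega|}\int_\Omega\diver f$ is a well-defined finite number (and this is also what produces the innocuous ``$+1$'' in~\eqref{eq.globalCZ}); your reading of it as the source of a distributional normal trace for $f$ on $\partial\Omega$ is not the role it plays here, and in the sequel (\cref{coro.GlobalWeakLp}) the corresponding hypothesis is dropped entirely in favour of the abstract pairing $\langle\diver f,1\rangle$.
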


The following consequence of \cref{thm.LpGlobalRegularity} will be of particular importance in the proof of the pointwise estimates for the Neumann--Green function in \cref{greenfunctionestimates} below.
For the proof, we refer to~\cite{DM}*{Lemma~1}. 

\begin{corollary}[Global $L^{p,\infty}$ regularity estimates]
\label{coro.GlobalWeakLp} 
Let $\Omega\subset\R^d$ be a $C^{2,\alpha}$ domain for some $\alpha\in(0,1)$.
Let $p\in(1,+\infty)$ and $f \in L^{p,\infty}(\Omega)$.
If $u$ is a weak solution of 
\[
    \left\{ 
    \begin{array}{rcll}
        -\Delta u  & = & \diver f - \frac{1}{|\Omega|} \langle \diver f, 1\rangle &\text {in } \Omega\\[3mm]
        \partial_n u & = & 0 &\text{on } \partial\Omega, 
    \end{array}
    \right. 
\]
then
\begin{equation}
        \label{eq.globalWeakCZ}
        \|\nabla u \|_{L^{p,\infty}(\Omega)} \leq C(p,\Omega)\left(\|f\|_{L^{p,\infty}(\Omega)}+1\right).
    \end{equation}
\end{corollary}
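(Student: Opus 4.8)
The plan is to read the estimate off from \cref{thm.LpGlobalRegularity} by \emph{real interpolation}: the Neumann--Green solution operator is a Calder\'on--Zygmund type singular integral, and once it is known to be bounded on two Lebesgue scales $L^{p_0}$ and $L^{p_1}$, it must also be bounded on every intermediate Lorentz space $L^{p,\infty}$.

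First I would fix exponents $1<p_0<p<p_1<+\infty$ together with the $\theta\in(0,1)$ such that $\frac1p=\frac{1-\theta}{p_0}+\frac{\theta}{p_1}$, and introduce the map $L$ that sends a vector field $f$ to $\nabla u$, where $u$ is the unique zero-average weak solution of
\[
-\Delta u=\diver f-\frac1{|\Omega|}\langle\diver f,1\rangle\ \text{ in }\Omega,\qquad \partial_n u=0\ \text{ on }\partial\Omega.
\]
Subtracting the constant $\frac1{|\Omega|}\langle\diver f,1\rangle$ is exactly what forces the Neumann compatibility condition, so the problem is solvable; after the average normalization, $f\mapsto\nabla u$ is a \emph{linear} map. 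By \cref{thm.LpGlobalRegularity} (which is where the $C^{2,\alpha}$ hypothesis on $\Omega$ enters) applied with $q=p_0$ and with $q=p_1$, one has $\|Lf\|_{L^q(\Omega)}\le\|u\|_{W^{1,q}(\Omega)}\le C_q\big(\|f\|_{L^q(\Omega)}+1\big)$; since $L$ is linear, replacing $f$ by $tf$ and letting $t\to+\infty$ removes the additive $1$, so $L$ is genuinely bounded on $L^{p_0}(\Omega)$ and on $L^{p_1}(\Omega)$.

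Then I would invoke the classical identification of Lorentz spaces as real interpolation spaces, namely $\big(L^{p_0}(\Omega),L^{p_1}(\Omega)\big)_{\theta,\infty}=L^{p,\infty}(\Omega)$ with the above relation between $p_0,p_1,p,\theta$ (see, e.g., \cite{BL}), together with the interpolation property of bounded linear operators, to conclude that $L\colon L^{p,\infty}(\Omega)\to L^{p,\infty}(\Omega)$ is bounded. Undoing the notation, this says $\|\nabla u\|_{L^{p,\infty}(\Omega)}\le C\|f\|_{L^{p,\infty}(\Omega)}\le C\big(\|f\|_{L^{p,\infty}(\Omega)}+1\big)$, which is \eqref{eq.globalWeakCZ}.

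The interpolation step is routine; the point that deserves care --- and what I expect to be the main obstacle --- is the \emph{well-posedness and compatibility of $L$ at the intersection level}. One has to check that for $f\in L^{p_0}(\Omega)\cap L^{p_1}(\Omega)$ the distributional quantity $\langle\diver f,1\rangle$ is meaningful (note that $\diver f\in L^1(\Omega)$ is not automatic from $f\in L^{p}$), that the two endpoint solution operators agree on this dense subspace, and that the resulting operator truly extends to the interpolation space rather than being merely defined on each endpoint. Granting this, the rest is the standard real-interpolation argument; this is precisely the content of \cite{DM}*{Lemma~1}, to which we refer for the details.
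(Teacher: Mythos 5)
Your proof is correct and matches the paper's approach: the paper establishes this corollary by citing \cite{DM}*{Lemma~1}, which is precisely the real-interpolation argument you lay out, namely converting the two $L^p$ bounds from \cref{thm.LpGlobalRegularity} into a weak-$L^p$ bound via the identity $\bigl(L^{p_0}(\Omega),L^{p_1}(\Omega)\bigr)_{\theta,\infty}=L^{p,\infty}(\Omega)$ and linearity of the normalized Neumann solution operator. The subtleties you flag — well-definedness of $\langle\diver f,1\rangle$ on $L^{p_0}\cap L^{p_1}$, agreement of the endpoint operators on a dense subspace, and genuine extension to the interpolation space — are exactly the technical points handled in \cite{DM}, to which the paper defers as well.
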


\subsection{An interpolation inequality}

We close this section with the following simple interpolation inequality.
Although we need to apply \cref{interpolation.lemma} only to the ball or to (a smooth regularization of) the half-ball, we state it in the general case.

\begin{lemma}[Interpolation]\label{interpolation.lemma} 
Let $\Omega\subset\R^d$ be a bounded connected open set of class $C^{2,\alpha}$ for some $\alpha \in (0,1)$. 
There exists a constant $C>0$, depending on $\Omega$ only, with the following property.
If $f\in C^{2,\alpha}(\Omega)$ and $k\in\{0,1,2\}$, then
\begin{equation}
\label{eq.interpolation2}
    [f]_{C^k(\Omega)} \leq C \|f\|_{C^0(\Omega)}^{\frac{k}{2+\alpha}}\|f\|_{C^{2,\alpha}(\Omega)}^{1-\frac{k}{2+\alpha}}. 
\end{equation}
\end{lemma}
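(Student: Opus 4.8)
The plan is to reduce the estimate to the whole of $\R^d$ by a suitable extension and then to prove it there by a scaling-and-optimisation argument. The case $k=0$ is immediate, since $[f]_{C^0(\Omega)}=\|f\|_{C^0(\Omega)}\le\|f\|_{C^{2,\alpha}(\Omega)}$. For $k\in\{1,2\}$, the key preliminary step is to fix a linear extension operator $E\colon C^{2,\alpha}(\Omega)\to C^{2,\alpha}(\R^d)$ associated to the $C^{2,\alpha}$ domain $\Omega$ — for instance the Stein extension, or one built by flattening $\partial\Omega$ through the $C^{2,\alpha}$ boundary charts, reflecting evenly across $\{x_d=0\}$, and patching with a partition of unity — enjoying the two continuity properties
\begin{equation*}
\|Ef\|_{C^{2,\alpha}(\R^d)}\le C\|f\|_{C^{2,\alpha}(\Omega)},
\qquad
\|Ef\|_{C^0(\R^d)}\le C\|f\|_{C^0(\Omega)},
\end{equation*}
for a constant $C=C(\Omega)>0$. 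Setting $g:=Ef$, so that $g\equiv f$ on $\Omega$ and therefore $[f]_{C^k(\Omega)}\le[g]_{C^k(\R^d)}$, everything reduces to proving
\begin{equation}
\label{eq:interp_Rd}
[g]_{C^k(\R^d)}\le C\,\|g\|_{C^0(\R^d)}^{\,1-\frac{k}{2+\alpha}}\,[g]_{C^{2,\alpha}(\R^d)}^{\,\frac{k}{2+\alpha}},
\qquad k\in\{1,2\},
\end{equation}
for every $g\in C^{2,\alpha}(\R^d)$; combining~\eqref{eq:interp_Rd} with the two displayed bounds and with $[g]_{C^{2,\alpha}(\R^d)}\le\|g\|_{C^{2,\alpha}(\R^d)}$ then yields the desired interpolation inequality.

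To prove~\eqref{eq:interp_Rd} I would first establish the scaled estimate
\begin{equation}
\label{eq:scaled_interp}
[g]_{C^k(\R^d)}\le C\big(\rho^{-k}\|g\|_{C^0(\R^d)}+\rho^{\,2+\alpha-k}[g]_{C^{2,\alpha}(\R^d)}\big),
\qquad\text{for all }\rho>0,\ k\in\{1,2\}.
\end{equation}
This comes from Taylor's formula with integral remainder applied to $s\mapsto g(x+se)$: for a unit vector $e$ and $\rho>0$ one has the finite-difference identities
\begin{align*}
g(x+\rho e)-g(x-\rho e)&=2\rho\,\partial_e g(x)+\int_{-\rho}^{\rho}\big(\partial_e g(x+se)-\partial_e g(x)-s\,\partial_e^2 g(x)\big)\,ds,\\
g(x+\rho e)+g(x-\rho e)-2g(x)&=\rho^2\,\partial_e^2 g(x)+\int_{0}^{\rho}\!\!\int_{-t}^{t}\big(\partial_e^2 g(x+se)-\partial_e^2 g(x)\big)\,ds\,dt,
\end{align*}
in which the integral remainders are bounded by $C\rho^{2+\alpha}[g]_{C^{2,\alpha}(\R^d)}$ while the left-hand sides are bounded by $4\|g\|_{C^0(\R^d)}$; this produces $|\partial_e g(x)|\le C(\rho^{-1}\|g\|_{C^0(\R^d)}+\rho^{1+\alpha}[g]_{C^{2,\alpha}(\R^d)})$ and $|\partial_e^2 g(x)|\le C(\rho^{-2}\|g\|_{C^0(\R^d)}+\rho^{\alpha}[g]_{C^{2,\alpha}(\R^d)})$. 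Mixed second derivatives $\partial_i\partial_j g$ are then recovered from $\partial_e^2 g$ by polarisation, taking $e\in\{\mathrm{e}_i,\mathrm{e}_j,(\mathrm{e}_i+\mathrm{e}_j)/\sqrt2\}$. Taking the supremum over $x\in\R^d$ gives~\eqref{eq:scaled_interp}.

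Finally, if $[g]_{C^{2,\alpha}(\R^d)}=0$ then $D^2g$ is constant and the boundedness of $g$ forces $g$ to be constant, so both sides of~\eqref{eq:interp_Rd} vanish; otherwise, inserting $\rho=(\|g\|_{C^0(\R^d)}/[g]_{C^{2,\alpha}(\R^d)})^{1/(2+\alpha)}$ into~\eqref{eq:scaled_interp} balances the two terms and gives exactly~\eqref{eq:interp_Rd}, after which tracing back through the extension completes the argument. The only genuinely technical ingredient is the existence of an extension operator that is bounded \emph{simultaneously} on $C^{2,\alpha}(\Omega)$ and on $C^0(\Omega)$: this is precisely what provides a single scale $\rho$ valid at \emph{every} point, including near $\partial\Omega$. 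Indeed, attempting to run~\eqref{eq:scaled_interp} directly inside $\Omega$ breaks down close to the boundary, where one may only use balls of radius $\rho\le\operatorname{dist}(x,\partial\Omega)$, and this quantity degenerates; the extension removes this obstruction, and everything else (the Taylor identities and the one-parameter minimisation) is elementary.
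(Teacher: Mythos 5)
Your route --- build an extension operator bounded simultaneously on $C^0(\Omega)$ and $C^{2,\alpha}(\Omega)$, then prove the inequality on $\R^d$ by finite-difference Taylor identities optimised over the step size $\rho$ --- is a standard and fully correct way to prove this sort of H\"older interpolation, and the paper in fact states the lemma with no proof attached, so there is no alternative argument to compare against. The identities, the bound $C\rho^{2+\alpha}[g]_{C^{2,\alpha}(\R^d)}$ on the remainders, the recovery of mixed second derivatives by polarisation, and the balancing choice $\rho=(\|g\|_{C^0(\R^d)}/[g]_{C^{2,\alpha}(\R^d)})^{1/(2+\alpha)}$ are all in order.

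However, your last sentence asserts that you obtain ``the desired interpolation inequality,'' and you should have noticed that this is not literally so. Traced through the extension, your argument yields
\[
    [f]_{C^k(\Omega)} \leq C\,\|f\|_{C^0(\Omega)}^{\,1-\frac{k}{2+\alpha}}\,\|f\|_{C^{2,\alpha}(\Omega)}^{\,\frac{k}{2+\alpha}},
\]
whereas the stated~\eqref{eq.interpolation2} has its two exponents transposed. The exponents you obtain are the ones forced by scaling; the printed ones cannot hold for $k=2$. Indeed, for $f_n(x)=n^{-1}\sin(nx)$ on a fixed interval one has $\|f_n\|_{C^0}\sim n^{-1}$, $[f_n]_{C^2}\sim n$ and $\|f_n\|_{C^{2,\alpha}}\sim n^{1+\alpha}$, so the printed right-hand side behaves like $n^{(\alpha^2+\alpha-2)/(2+\alpha)}\to 0$ as $n\to\infty$ while the left-hand side tends to infinity. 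Hence~\eqref{eq.interpolation2} as printed contains a typo --- harmless in this paper, since every application interpolates two quantities carrying identical upper bounds, so the split of exponents washes out --- and your proof establishes the corrected version. You should say so explicitly rather than claiming to have reproduced~\eqref{eq.interpolation2} verbatim.
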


\section{Abstract bilinear interpolation}\label{s:bilin_interp}
Here we recall a particular case of the abstract bilinear interpolation theorem from \cite{CDF20}*{Theorem 3.5}. We start by defining the basic notions in order to state the result, while we refer the reader to \cites{BL,L} for a detailed overview about Interpolation Theory.
\\
\\
Let $(X,\|\cdot\|_X)$ and $(Y,\|\cdot\|_Y)$ be two real Banach spaces. The couple $(X,Y)$ is said to be an interpolation  couple if both $X$ and $Y$ are continuously embedded in a topological Hausdorff vector space.  Moreover, we recall the definition of the $K$-function, by introducing the following notation. Given $x\in X+Y$ we denote $\Omega(x)=\{  (a,b)\in X\times Y : a+b=x\}\subset X\times Y$, and for all $t>0$ we define
\begin{equation*}
K(t,x,X,Y)=\inf\Big\{\|a\|_X+t\|b\|_Y : (a,b)\in\Omega(x)\Big\}.
\end{equation*}
Consequently, for any $\theta\in(0,1)$,  we set
\[
(X,Y)_{\theta,\infty}=\left\{x\in X+Y \hbox{ s.t. } t\mapsto t^{-\theta}K(t,x)
\in L^\infty([0,\infty])\right\},
\]
which turns out to be a Banach space when endowed with the norm 
\begin{equation*}
\|x\|_{(X,Y)_{\theta,\infty}}=\left\|(\cdot)^{-\theta}K(\cdot,x)\right\|_{L^\infty([0,\infty])}.
\end{equation*}
\begin{theorem}[Bilinear interpolation from \cite{CDF20}*{Theorem 3.5}]\label{t:bil_interp}
Let $(X_1,X_2)$ and $(Y_1,Y_2)$ be two interpolation couples. Let $T$ be a bilinear operator satisfying 
\begin{align*}
\|T(a_1,a_2)\|_{Y_1}&\leq C\|a_1\|_{X_1}\|a_2\|_{X_1},\\\label{bil:est:2}
\|T(b_1,b_2)\|_{Y_2}&\leq C\|b_1\|_{X_2}\|b_2\|_{X_2},
\end{align*}
and
\begin{equation}\label{half_interp_hyp}
\|T(a,b)\|_{(Y_1,Y_2)_{\frac12,\infty}}+\|T(b,a)\|_{(Y_1,Y_2)_{\frac12,\infty}}\leq C\|a\|_{X_1}\|b\|_{X_2},
\end{equation}
for some constant $C>0$ independent of $a,a_1,a_2\in X_1$ and $b,b_1,b_2\in X_2$. Then, for any $\theta\in(0,1)$, there holds
\begin{equation*}
\|T(x,x)\|_{(Y_1,Y_2)_{\theta,\infty}}\leq C\|x\|^2_{(X_1,X_2)_{\theta,\infty}} 
\end{equation*}
for all $x\in(X_1,X_2)_{\theta,\infty}$.
\end{theorem}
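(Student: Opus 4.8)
The plan is to run the classical $K$-functional scheme for bilinear real interpolation, the only genuinely new point being the quadratic matching of scales forced by the degree-two homogeneity of $T$. Fix $x\in(X_1,X_2)_{\theta,\infty}\subset X_1+X_2$. For every $t>0$, by definition of the $K$-functional I would pick a near-optimal decomposition $x=a_t+b_t$ with $a_t\in X_1$, $b_t\in X_2$ and
\[
\|a_t\|_{X_1}+t\|b_t\|_{X_2}\le 2\,K(t,x,X_1,X_2).
\]
Expanding by bilinearity, $T(x,x)=T(a_t,a_t)+T(a_t,b_t)+T(b_t,a_t)+T(b_t,b_t)$, where the first summand lies in $Y_1$, the last in $Y_2$, and the two mixed ones in $(Y_1,Y_2)_{\frac12,\infty}$; in particular $T(x,x)\in Y_1+Y_2$ and, by subadditivity of $K(s,\cdot\,,Y_1,Y_2)$ for each fixed $s>0$,
\[
K(s,T(x,x),Y_1,Y_2)\le K(s,T(a_t,a_t))+K(s,T(a_t,b_t))+K(s,T(b_t,a_t))+K(s,T(b_t,b_t)).
\]

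The heart of the argument is to tie the two scales together by setting $s=t^2$. With this choice I would estimate: using the trivial splitting and the first hypothesis, $K(t^2,T(a_t,a_t))\le\|T(a_t,a_t)\|_{Y_1}\le C\|a_t\|_{X_1}^2$; using the second hypothesis, $K(t^2,T(b_t,b_t))\le t^2\|T(b_t,b_t)\|_{Y_2}\le C\,t^2\|b_t\|_{X_2}^2$; and, exploiting that $z\in(Y_1,Y_2)_{\frac12,\infty}$ gives $K(r,z,Y_1,Y_2)\le r^{1/2}\|z\|_{(Y_1,Y_2)_{\frac12,\infty}}$ for every $r>0$, evaluating at $r=t^2$ and invoking hypothesis~\eqref{half_interp_hyp},
\[
K(t^2,T(a_t,b_t))+K(t^2,T(b_t,a_t))\le t\Big(\|T(a_t,b_t)\|_{(Y_1,Y_2)_{\frac12,\infty}}+\|T(b_t,a_t)\|_{(Y_1,Y_2)_{\frac12,\infty}}\Big)\le C\,t\,\|a_t\|_{X_1}\|b_t\|_{X_2}.
\]
Summing the four contributions and using $\|a_t\|_{X_1}^2+t^2\|b_t\|_{X_2}^2+t\|a_t\|_{X_1}\|b_t\|_{X_2}\le(\|a_t\|_{X_1}+t\|b_t\|_{X_2})^2$, I obtain
\[
K(t^2,T(x,x),Y_1,Y_2)\le C\big(\|a_t\|_{X_1}+t\|b_t\|_{X_2}\big)^2\le 4C\,K(t,x,X_1,X_2)^2.
\]

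It then only remains to pass to the interpolation norms. Since $t\mapsto t^2$ is a bijection of $(0,\infty)$,
\[
\|T(x,x)\|_{(Y_1,Y_2)_{\theta,\infty}}=\sup_{s>0}s^{-\theta}K(s,T(x,x))=\sup_{t>0}t^{-2\theta}K(t^2,T(x,x))\le 4C\sup_{t>0}\big(t^{-\theta}K(t,x)\big)^2=4C\,\|x\|_{(X_1,X_2)_{\theta,\infty}}^2,
\]
which is the desired bound (note that the constant does not depend on $\theta$, consistently with the endpoint exponent $\infty$). I do not expect a serious obstacle: this is essentially the Lions--Peetre bilinear scheme, and the only conceptual point is to realize that the natural matching of scales is $s=t^2$ and that the off-diagonal pieces $T(a_t,b_t)$, $T(b_t,a_t)$ cannot be controlled by the two basic boundedness assumptions alone --- precisely the reason for imposing hypothesis~\eqref{half_interp_hyp} at the midpoint $\tfrac12$. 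The one technical caveat is that $T$ must be understood as a bilinear map on the relevant sums of spaces so that $T(x,x)$ and its four-term expansion make sense; in the applications we have in mind $X_2\hookrightarrow X_1$, hence $X_1+X_2=X_1$ and $T$ is genuinely defined on $X_1\times X_1$, so no density or continuous-extension argument is needed.
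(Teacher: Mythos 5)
The paper states this theorem as a particular case of \cite{CDF20}*{Theorem~3.5} and gives no proof of its own, so there is no internal argument to compare against. Your proof is correct and is precisely the canonical Lions--Peetre $K$-functional scheme for bilinear operators, with the degree-two homogeneity handled by the matching of scales $s=t^2$; you have also correctly located the role of the midpoint hypothesis~\eqref{half_interp_hyp}, which supplies the factor $t$ needed to complete the square $(\|a_t\|_{X_1}+t\|b_t\|_{X_2})^2$ for the two cross terms that the endpoint bounds alone cannot reach. The one caveat you already flag---that the four-term expansion of $T(x,x)$ presupposes $T$ to be defined on $X_1+X_2$---is harmless in the paper's applications, where $X_2\hookrightarrow X_1$.
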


\section{Pointwise estimates on the Green--Neumann function}
\label{greenfunctionestimates}
 
In this section, we establish some estimates on the \emph{Green--Neumann function} on a sufficiently regular domain $\Omega\subset\R^d$ for $d\ge2$ we need in the paper. 
Basically, these estimates assert that the behavior of \emph{Green--Neumann function} is comparable to that of the corresponding \emph{Newtonian potential}
\begin{equation}
\label{newtonian_potential}
\pot(x)
=
    \left\{
    \begin{array}{rcl}
\dfrac{1}{\omega_d(d-2)}\,\dfrac1{|x|^{d-2}} &\text{for } d\ge3
\\[6mm]
-\dfrac{1}{2\pi}\,\log|x| & \text{for } d=2
    \end{array}
    \right.
\end{equation} 
with $x\in\R^d$, $x\ne0$.

On a bounded connected open set $\Omega\subset\R^d$ of class $C^{1}$, the \emph{Green--Neumann function} $G=G(x,y)$  (see~\cite{DB10}*{Chapter~3}) is a solution of
\[
    \left\{
    \begin{array}{rcll}
         -\Delta G(x,\,\cdot\,) &=& \delta_x - \frac{1}{|\Omega|} &\text{in } \Omega  \\[2mm]
         \partial_n G(x,\, \cdot\,)&=&0 &\text{on } \partial\Omega, 
    \end{array}
    \right.
\]
whenever $x\in\Omega$, where as usual $\delta_x$ denotes the Dirac measure at~$x$.
Possibly replacing $G(x,y)$ with $G(x,y)-v(x)$, where 
\[
    v(x)=\frac{1}{|\Omega|}\int_{\Omega}G(x,y)\,dy,
\] 
it is not restrictive to assume that~$G$ is a symmetric function, see~\cite{DB10}*{Chapter~3, Lemma~7.1} for the proof of this statement.
We refer the reader to~\cite{DB10}*{Chapter~3, Section~7} for more details on the main properties of  the Green--Neumann function.

The main estimates on the Green--Neumann function $G$ we use in this paper are gathered in the following result. 

\begin{theorem}[Pointwise estimates of the Green function]\label{l:greenestimate}
Let $\Omega\subset\R^d$, $d\ge2$, be a bounded connected open set of class $C^{2,\alpha}$ for some $\alpha\in(0,1)$.
There exists a constant $C>0$, depending on $\Omega$ only, with the following properties.

\begin{enumerate}[(i)]

\item\label{item:neu.der1} 
If $\beta\in\N_0^d$ is such that $|\beta| \leq 2$, then
\begin{equation}
\label{eq.pointwiseOrder12}
        |\partial_y^{\beta}G(x,y)| \leq \frac{C(\Omega, \beta)}{|x-y|^{d-2+|\beta|}} 
    \end{equation}
for all $x,y\in\Omega$. 

\item\label{item:neu.der2}
If $x_1,x_2 \in\Omega$, $\bar x := \frac{x_1+x_2}{2}$ and $h:=|x_1-x_2|$, then
    \begin{equation}
        \label{eq.pointwiseDifference}
        |\partial_{y_iy_j}G(x_1,y)-\partial_{y_iy_j}G(x_2,y)|\leq \frac{Ch}{|\bar x - y|^{d+1}} 
\end{equation}
for all $y\in \Omega\setminus B(\bar x,h)$ and $i,j=1,\dots,d$.
\end{enumerate}
\end{theorem}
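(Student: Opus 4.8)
The plan is to show that the Green--Neumann function $G(x,\cdot)$ and its derivatives are comparable to the Newtonian potential $\pot$ by combining the uniform weak-type estimate of \cref{coro.GlobalWeakLp} with the interior and boundary elliptic estimates in \cref{thm.ellipticRegularity} and \cref{thm.LpLocalRegularity}, applied on dyadic balls. Fix $x\in\Omega$ and set $r=|x-y|$. If $r\ge c\,\mathrm{diam}(\Omega)$, every quantity in \eqref{eq.pointwiseOrder12}--\eqref{eq.pointwiseDifference} is trivially bounded, so I may assume $r$ small. The relevant point $y$ (or pair $x_1,x_2$) then falls in one of two regimes: either the ball of radius $\sim r$ around $y$ lies inside $\Omega$, where interior estimates apply, or it meets $\partial\Omega$, in which case I first flatten the boundary through a $C^{2,\alpha}$ chart --- this is where $\partial\Omega\in C^{2,\alpha}$ enters --- after which $-\Delta$ becomes a divergence-form operator $\diver(A\nabla\,\cdot\,)$ with $A\in C^{1,\alpha}$ uniformly elliptic and the homogeneous Neumann condition becomes $b\cdot\nabla\,\cdot\,=0$ with $b\in C^{1,\alpha}$, $b\cdot\mathrm{e}_d\neq0$, i.e.\ exactly the setting of \eqref{eq.schauderBoundary} and \eqref{eq.boundaryCZ}. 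I will carry out the argument for $d\ge3$; the case $d=2$ is analogous, the estimates carrying the usual logarithmic correction coming from the planar Newtonian potential.

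First I would record a uniform weak-$L^p$ bound. Casting $-\Delta_y G(x,\cdot)=\delta_x-\tfrac1{|\Omega|}$ into the divergence form required by \cref{coro.GlobalWeakLp} with a field bounded uniformly in $x\in\Omega$ in $L^{d/(d-1),\infty}(\Omega)$ --- which is possible since $-\nabla\pot(\cdot-x)\in L^{d/(d-1),\infty}(\Omega)$ has norm bounded uniformly in $x\in\Omega$ --- the corollary yields
\[
\|\nabla_y G(x,\cdot)\|_{L^{d/(d-1),\infty}(\Omega)}\le C
\]
with $C$ independent of $x$. By the Sobolev embedding on Lorentz spaces together with the zero-average normalisation of $G(x,\cdot)$, this upgrades to $\|G(x,\cdot)\|_{L^{d/(d-2),\infty}(\Omega)}\le C$, again uniformly in $x$. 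These are the only global inputs I need.

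Next I would prove \eqref{eq.pointwiseOrder12}. Take $B=B(y,r/2)$; since $x\notin 2B$, on $2B\cap\Omega$ the function $G(x,\cdot)$ solves $-\Delta G(x,\cdot)=-\tfrac1{|\Omega|}$ with homogeneous Neumann datum on $2B\cap\partial\Omega$. Rescaling $B$ to unit size and applying the interior (resp.\ boundary, after flattening) De Giorgi--Nash--Moser sup-estimate --- equivalently, a finite bootstrap of the local $L^p$ estimates \eqref{eq.interiorCZ}--\eqref{eq.boundaryCZ} --- together with the weak-$L^{d/(d-2),\infty}$ bound above gives, for any $p\in(0,\tfrac{d}{d-2})$,
\[
\sup_{B}|G(x,\cdot)|\le C\Big(|2B|^{-1}\!\int_{2B}|G(x,\cdot)|^p\,dy\Big)^{1/p}+C r^2\le C\,r^{-(d-2)}\|G(x,\cdot)\|_{L^{d/(d-2),\infty}(\Omega)}+Cr^2\le C\,r^{2-d},
\]
the middle step being the Lorentz-space bound $\|h^p\|_{L^1(2B)}\le C\|h\|_{L^{d/(d-2),\infty}}^p\,r^{d-p(d-2)}$. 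Then, for $1\le|\beta|\le2$, on $B(y,r/4)$ the function $G(x,\cdot)$ solves a constant-coefficient (resp.\ $C^{1,\alpha}$) elliptic equation with bounded right-hand side and, near $\partial\Omega$, homogeneous Neumann data; rescaling and invoking the Schauder estimates \eqref{eq.schauderInterior}--\eqref{eq.schauderBoundary} gives $r^{|\beta|}|\partial^\beta_y G(x,y)|\le C\big(\sup_{B(y,r/2)}|G(x,\cdot)|+r^2\big)\le C\,r^{2-d}$, which is \eqref{eq.pointwiseOrder12}.

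Finally I would run the same dyadic-ball scheme in the $x$-variable as well, using $-\Delta_x G(x,y)=\delta_y-\tfrac1{|\Omega|}$, $\partial_{n_x}G=0$, the symmetry $G(x,y)=G(y,x)$ and the joint smoothness of $G$ off the diagonal, to upgrade \eqref{eq.pointwiseOrder12} to $|\partial^\gamma_x\partial^\beta_y G(x,y)|\le C|x-y|^{2-d-|\gamma|-|\beta|}$ for all $|\gamma|+|\beta|\le3$. Then, given $x_1,x_2$ with $h=|x_1-x_2|$ and $y\in\Omega\setminus B(\bar x,h)$, the mean value theorem in the first variable gives
\[
\partial_{y_iy_j}G(x_1,y)-\partial_{y_iy_j}G(x_2,y)=(x_1-x_2)\cdot\int_0^1\nabla_x\big(\partial_{y_iy_j}G\big)\big(x_1+t(x_2-x_1),y\big)\,dt,
\]
and since $|x_1+t(x_2-x_1)-y|\ge|\bar x-y|-\tfrac h2\ge\tfrac12|\bar x-y|$ on $\Omega\setminus B(\bar x,h)$, the integrand is $\le C|\bar x-y|^{-(d+1)}$, yielding \eqref{eq.pointwiseDifference}. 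The main obstacle is twofold: making every bound uniform in $x$, including when $x$ is close to $\partial\Omega$ --- which is exactly what rules out a purely interior argument and forces the use of the global weak-$L^p$ estimate of \cref{coro.GlobalWeakLp} and of careful boundary flattening (hence the $C^{2,\alpha}$ hypothesis) --- and the scaling bookkeeping through the Moser and Schauder estimates on dyadic balls, in particular the extension of the pointwise bounds to mixed $(x,y)$-derivatives of order three needed for \eqref{eq.pointwiseDifference}.
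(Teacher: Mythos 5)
Your argument for part~\eqref{item:neu.der1} follows essentially the same route as the paper's: a uniform global weak-$L^{d/(d-1),\infty}$ bound on $\nabla_y G(x,\cdot)$ via \cref{coro.GlobalWeakLp}, converted through rescaled local $W^{2,p}$/Schauder estimates (interior, and boundary after flattening with a $C^{2,\alpha}$ chart) into a sup-bound and then into derivative bounds, followed by interpolation. The only cosmetic difference is that the paper works with $\psi_x=G(x,\cdot)-\pot(x-\cdot)$ whereas you work directly with $G(x,\cdot)$ on a ball $B(y,r/2)$ that stays away from the singularity; on that ball the two functions satisfy the same equation, so the computations coincide.

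For part~\eqref{item:neu.der2} you take a genuinely different route: you aim first to establish the mixed pointwise bound $|\partial_x^{\gamma}\partial_y^{\beta}G(x,y)|\le C|x-y|^{2-d-|\gamma|-|\beta|}$ for $|\gamma|+|\beta|\le 3$, and then apply the mean value theorem in $x$ to $\partial_{y_i}\partial_{y_j}G(\cdot,y)$. This is plausible---for fixed $y$, the function $x\mapsto\partial_{y_i}\partial_{y_j}G(x,y)$ is harmonic in $x$ away from $y$ with zero Neumann datum on $\partial\Omega$, so rescaled interior and boundary gradient estimates would deliver the missing $\nabla_x$ factor---but it is precisely the step you flag as the remaining obstacle and leave unproved, and it pushes the regularity one order beyond what part~\eqref{item:neu.der1} provides. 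The paper avoids any order-three estimate by applying the mean value theorem at the level of the sup-norm: writing $g_{x_1,x_2}=\psi(x_1,\cdot)-\psi(x_2,\cdot)$, one gets
\[
\|g_{x_1,x_2}\|_{L^\infty(B(y,33r/34)\cap\Omega)}\;\lesssim\; h\,\sup_{B(\bar x,h)}|\nabla_x G(\cdot,z)|\;\lesssim\; \frac{h}{r^{d-1}},
\]
where the $\nabla_x$ bound follows from symmetry and the $|\beta|=1$ case of~\eqref{item:neu.der1}; then one notices that $g_{x_1,x_2}$ solves a \emph{homogeneous} Poisson--Neumann problem with explicit Newtonian boundary data and applies local Schauder (interior or boundary-flattened) to this difference to gain the two $y$-derivatives in one swoop, with no need of joint regularity in $(x,y)$. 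The paper's scheme thus stays within the budget of two derivatives in $y$ and one in $x$, while yours requires a separate mixed-third-derivative argument up to $\partial\Omega$ that, though feasible, still has to be carried out carefully in the flattened charts.
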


The proofs of~\eqref{item:neu.der1} and~\eqref{item:neu.der2} are very similar and follow the simple argument outlined in~\cite{DM}, which we readapt to the Neumann boundary case. 
To keep this article short, we prove \cref{l:greenestimate} for $d\ge 3$ only.
The proof of \cref{l:greenestimate} for  $d=2$ follows the same strategy with the usual minor adaptations depending on the different expression of the Newtonian potential~\eqref{newtonian_potential}.

\begin{proof}[Proof of \cref{l:greenestimate}]
For $r_0>0$, we let 
\[
    \Omega_{r_0}=\left\{x\in\overline \Omega :  \operatorname{dist}(x,\partial\Omega) \leq r_0\right\}.
\]
By compactness, we can cover $\overline\Omega$ with balls of radius~$r_0$. This yields a finite covering of the set~$\Omega_{r_0}$,
\[
    \left\{
    B(c_k,r_0) : c_k\in\overline\Omega,\ k=1,\dots,K
    \right\}
\]
depending on the chosen~$r_0$.
Possibly choosing a smaller~$r_0$ if needed, one can ensure that, for each $k=1,\dots,K$, there exists a $C^{2,\alpha}$-diffeomorphism 
\[
\Phi_k \colon U_{k} \to B(c_k,16r_0) \cap \Omega,
\qquad
U_k\subset\R^d\ \text{open},
\] 
such that 
\[
    \Phi_k^{-1}(c_k)=0,
    \qquad
    B^+(0,8r_0) \subset U_k \subset B^+(0,32r_0).
\] 
Without loss of generality, we can further assume that 
\[
    \frac{3}{4}\leq \frac{|\Phi_k(x)-\Phi_k(y)|}{|x-y|} \leq \frac{5}{4}
\]
for all $x,y\in U_k$, $x\ne y$.

\medskip

Now let $x, y \in\Omega$ and set $r=|x-y|$. 
In the following, we assume that $r \leq r_0/2$. 
This condition will be removed in the last part of the proof.

\medskip

\noindent
\textit{Proof of~\eqref{item:neu.der1} for $r\le r_0/2$}.
We distinguish two cases. 

\medskip

\noindent\textbf{Case 1 $B(y,r/34)\subset \Omega$.}
Since the function 
\[
    \psi_x =\psi(x,\cdot) = G(x,\cdot)-\pot(x-\cdot)
\]
solves
\[
    \Delta \psi_x = \frac{1}{|\Omega|} \quad \text{in } B(y,r/34),
\]
the function 
\[
    \tilde{\psi}(z):=\psi_x(y+rz/68),
\quad \text{for}\ z\in B(0,2),
\]
solves 
\[
    \Delta\tilde{\psi}_x = C(\Omega)\, r^2\quad \text{in}\ B(0,2),
\]
where $C(\Omega)>0$ is a constant depending on $\Omega$ only (that may change from line to line in what follows). 
We now distinguish two subcases, $\beta=0$ and $|\beta|\ge1$.

\medskip

\noindent
\textbf{Subcase 1.1 $\beta=0$.} Let $p_0>\frac{d}{2}$, such that $W^{2,p_0} \hookrightarrow L^{\infty}$. Using the $W^{2,p}$ regularity estimate~\eqref{eq.interiorCZ} applied to $\psi_x$ with exponent $p_0$ and $\rho \in (1,2)$ sufficiently close to $1$; followed by the Sobolev embedding $W^{2,p_1}(B(0,\rho)) \hookrightarrow L^{p_0}(B(0,\rho))$ where $\frac{1}{p_1} \in \left(\frac{1}{p_0}+\frac{2}{d},1\right)$ to obtain 
\begin{align*}
    \|\tilde{\psi}_x\|_{W^{2,p_0}((B(0,1))} &\leq C(d,p_0,\rho) \left(\|\tilde{\psi}_x\|_{L^{p_0}(B(0,\rho))} + 1 \right) \\ 
    &\leq C(d,p_0,\rho) \left(\|\tilde{\psi}_x\|_{W^{2,p_1}(B(0,\rho))} + 1 \right).
\end{align*}
We are now in a position to iterate this application of \cref{thm.LpLocalRegularity}, constructing a decreasing sequence $p_n >1$. One only needs to iterate theses estimates $n=n(d)$ times so that $p_n\in (1,\frac{d}{d-2})$ and therefore 
\[
    \|\tilde{\psi}_x\|_{W^{2,p_0}((B(0,1))} \leq C(d,p_0,\rho)\left(\|\tilde{\psi}_x\|_{L^{p_n}(B(0,\rho^n))} + 1 \right).
\]
We choose $\rho = \rho(d)$ such that $\rho^n <2$. Now observe that, because $p_n<\frac{d}{d-2}$, there is $\varepsilon >0$ such that $\dot{W}^{1,\frac{d}{d-1}-\varepsilon}(B(0,2)) \hookrightarrow L^{p_n}(B(0,2))$. Using this and the inequality
\[
    \|\nabla \tilde{\psi}_x\|_{L^{\frac{d}{d-1} -\varepsilon}(B(0,2))} \leq C(d) \|\nabla \tilde{\psi}_x\|_{L^{\frac{d}{d-1}, \infty}(B(0,2))},
\]
we arrive at 
\begin{equation*}
    \|\tilde{\psi}_x\|_{W^{2,p_0}((B(0,1))} \leq C(d,p_0,\rho) \left(\|\nabla \tilde{\psi}_x\|_{L^{\frac{d}{d-1},\infty}(B(0,2))}+1\right). 
\end{equation*}
Using the Sobolev embedding $W^{2,p_0}(B(0,1)) \hookrightarrow L^{\infty}(B(0,1))$ and undoing the scaling, we finally have 
\begin{equation}
    \label{eq.firstBoundOrder0}
    \|\psi_x\|_{L^{\infty}(B(y,r/68))} \leq C(d) \left( \frac{1}{r^{d-2}}\|\nabla \psi_x\|_{L^{\frac{d}{d-1},\infty}(\Omega)}+1\right).
\end{equation}
Since $\|\nabla \phi(x - \cdot)\|_{L^{\frac{d}{d-1},\infty}(\Omega)}<\infty$, it follows that 
\[
    \|\nabla \psi_x\|_{L^{\frac{d}{d-1},\infty}(\Omega)} \leq C + \|\nabla G(x, \cdot)\|_{L^{\frac{d}{d-1},\infty}(\Omega)}.
\]
Observe that  
\[
    \left\{
    \begin{array}{rcll}
         -\Delta G(x,\,\cdot\,) &=& \diver f - \frac{1}{|\Omega|} &\text{in } \Omega  \\[2mm]
         \partial_n G(x,\, \cdot\,)&=&0 &\text{on } \partial\Omega , 
    \end{array}
    \right.
\]
with $f(y) = \frac{(x-y)}{d\omega_d|x-y|^{d}} \in L^{\frac{d}{d-1},\infty}$, so that \cref{coro.GlobalWeakLp} yields 
\[
    \|\nabla G(x,\cdot)\|_{L^{\frac{d}{d-1},\infty}(\Omega)} \leq C(d) \left(\|f\|_{L^{\frac{d}{d-1},\infty}(\Omega)} + 1\right),
\]
hence
\begin{equation}
    \label{eq.weakBoundPsi}
    \|\nabla \psi_x\|_{L^{\frac{d}{d-1},\infty}(\Omega)} \leq C(d) < +\infty. 
\end{equation}
Combining the above inequality with~\eqref{eq.firstBoundOrder0}, we obtain 
\[
    \|\psi_x\|_{L^{\infty}(B(y,r/68))} \leq \frac{C(d)}{r^{d-2}}. 
\]
which ends the proof.

\medskip

\noindent
\textbf{Subcase 1.2 $|\beta|\ge 1$.} By the elliptic regularity estimate~\eqref{eq.schauderInterior} in \cref{thm.ellipticRegularity}, it follows that 
\begin{equation}
    \label{eq.firstBound}
    \|\tilde{\psi}_x\|_{C^{2,\alpha}(B(0,1))} \leq C_d\left(\|\tilde{\psi}_x\|_{L^{\infty}(B(0,2))} +C(\Omega)\,r^2\right).
\end{equation}
Note that, by the zero-order estimates on $G$, that is, by~\eqref{eq.pointwiseOrder12} for $\beta=0$, one has
\[
    |\psi_x(z)| \leq |G(x,z)| + |\pot(x-z)| \leq \frac{C(\Omega)}{|x-z|^{d-2}} \leq \frac{C(\Omega)}{r^{d-2}}
\]
for all $z \in B(y,r/34)$, so that 
\begin{equation}
    \label{eq.order0psi}
    \|\tilde{\psi}_x\|_{L^{\infty}(B(0,2))} \leq \frac{C(\Omega)}{r^{d-2}}.
\end{equation}
The above inequality, combined with~\eqref{eq.firstBound}, gives 
\begin{equation}
    \label{eq.order2psi}
    \|\tilde{\psi}_x\|_{C^{2,\alpha}(B(0,1))}\leq \frac{C(\Omega)}{r^{d-2}}.
\end{equation}
Interpolating~\eqref{eq.order0psi} and~\eqref{eq.order2psi} using~\eqref{eq.interpolation2}, one obtains $[\tilde{\psi}_x]_{C^{|\beta|}(B(0,1))} \leq \frac{C(\Omega)}{r^{d-2}}$, that is
\[
    [\psi_x]_{C^{|\beta|}(B(y,r/68))} \leq \frac{C(\Omega)}{r^{d-2+|\beta|}},
\]
which is enough to conclude. 

\medskip 

\noindent
\textbf{Case 2 $B(y,r/34)\cap\partial\Omega \neq \varnothing$.} Let us pick any point $a\in B(y,r/34)\cap\partial\Omega$. 
We observe that, since $r\leq r_0/2$, there holds that $y\in B(c_k,r_0)$ for some $1 \leq k \leq K$ that we fix. 
Because $|y-a|\leq r/34$, we have 
\[
    B(a,r/17) \cap \Omega \subset B(c_k,r_0+r/17) \subset B(c_k,2r_0).
\]
Note that $Y:=\Phi_k^{-1}(a) \in B^+(0,4r_0)$, so that $B^+(Y,8r/17) \subset B^+(0,8r_0) \subset U_k$. We also note that $B(y,r/34)\cap \Omega \subset B(a,r/17) \cap \Omega \subset \Phi_k(B(Y,2r/17))$. In \cref{fig.1} we have drawn some balls appearing in the analysis. 
\begin{figure}
    \centering
    \includegraphics[width=\textwidth]{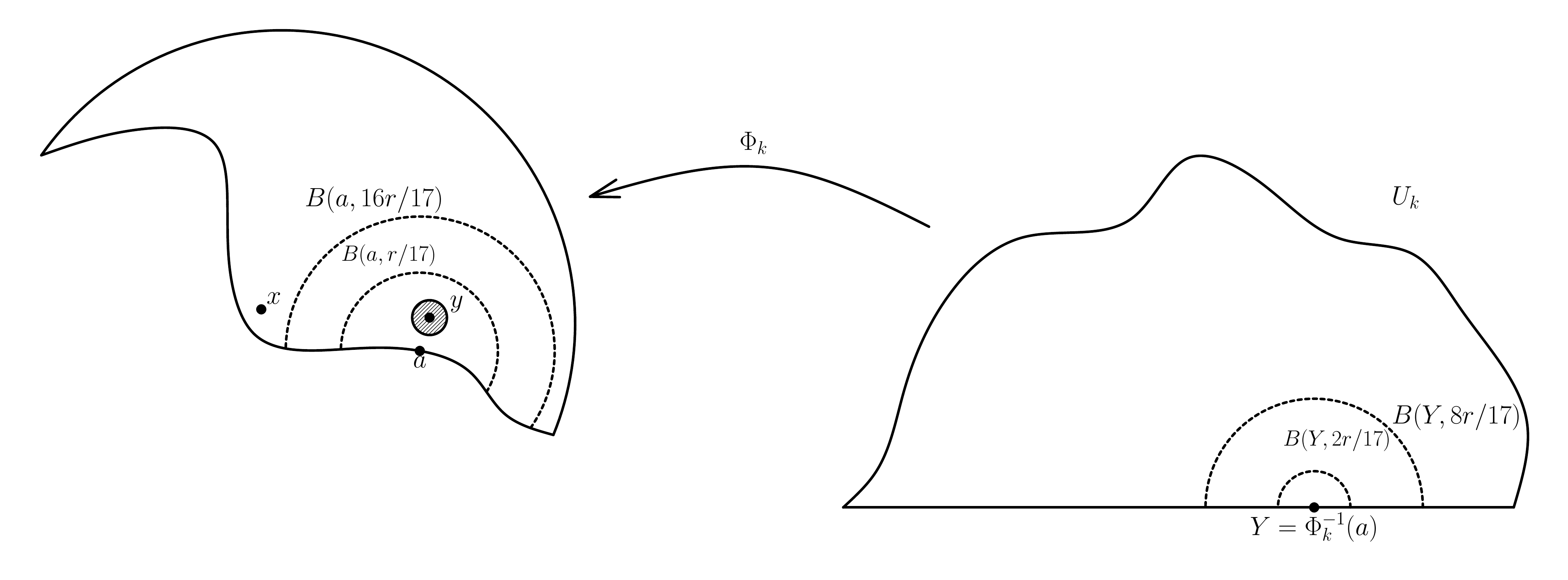}
    \caption{Some balls appearing in the analysis.  The goal is to estimate the function $\psi_x$ on the shaded ball.}
    \label{fig.1}
\end{figure}

We are now ready to start the proof.
Since
\[
\left\{
\begin{array}{rclll}
    \Delta \psi_x & = & \frac{1}{|\Omega|} =: C &\text{ in } &B(y,r/2)\cap\Omega \\[2mm]
    \partial_{n}\psi_x  & = & -\partial_n\phi (x-\cdot) &\text{ on } &B(y,r/2) \cap \partial\Omega, 
\end{array}
\right.
\]
the function $\theta_x:= \theta \circ \Phi_k$ satisfies 
\[
\left\{
\begin{array}{rclll}
    \diver  (A(z)\nabla \theta_x(z)) & = & C &\text{ in } &B^+(Y,8r/17) \\[2mm]
    b(z)\cdot \nabla \theta_x(z)  & = & -\partial_n\phi (x-\Phi_k(\cdot)) &\text{ on } &\Gamma(Y,8r/17), 
\end{array}
\right.
\]
where the matrix $A$ and the vector $b$ depend only on the $C^{2,\alpha}$ diffeomorphism $\Phi_k$, in particular $A$ and $b$ does not depend on $r$. Note that by our premiliminary remarks we can ensure that $B_+(Y,8r/17) \subset U_k$. Let us set $\tilde{A}(z)=A(Y+4rz/17)$ and $\tilde{b}(z)=b(Y+4rz/17)$ which are also of class $C^{1,\alpha}$ uniformly in $r$ (and $y$)  since $r \leq r_0/2$, which is fixed. Letting $\tilde{\theta}_x= \theta_x(Y+4r \cdot /17)$ and $\tilde{g} := - r \partial_n \phi (x-\Phi_k(Y+4r\cdot/17))$, we can write 
\[
\left\{
\begin{array}{rclll}
    \diver  (\tilde{A}\nabla \tilde{\theta}_x) & = & Cr^2 &\text{ in } &B^+(0,2) \\[2mm]
    \tilde{b}\cdot\nabla \tilde{\theta}_x& = & \tilde{g} &\text{ on } &\Gamma(0,2).  
\end{array}
\right.
\]
We now again distinguish two subcases, $\beta=0$ and $|\beta|\ge1$.

\medskip

\noindent
\textbf{Subcase 2.1 $\beta =0$.} 
We start with an application of the $W^{2,p}$ regularity estimate~\eqref{eq.boundaryCZ} applied to $\tilde{\theta}_x$, $\rho \in (1,2)$ and $p_0>\frac{d}{2}$, which, in turn, is such that we have the Sobolev embedding $W^{2,p_0}(B(0,\rho)) \hookrightarrow L^{\infty}(B(0,\rho))$. This writes 
\begin{equation}
    \label{eq.firstApplicationBoundaryReg}
    \|\tilde{\theta}_x\|_{W^{2,p_0}(B^+(0,1))} \leq C(d,\rho) \left(\|\tilde{\theta}_x\|_{L^{p_0}(B^+(0,\rho))} + \|\tilde{g}\|_{W^{1-\frac{1}{p_0},p_0} (B^+(0,\rho))}+1\right).  
\end{equation}
Observe that 
\[
\|\tilde{g}\|_{W^{1-\frac{1}{p_0},p_0}(B^+(0,\rho))} \leq \|\tilde{g}\|_{W^{1,p_0}(B^+(0,\rho))} \leq \|\tilde{g}\|_{L^{p_0}(B^+(0,\rho))} + \|\nabla \tilde{g}\|_{L^{p_0}(B^+(0,\rho))}.
\] We have 
\begin{align*}
    \|\nabla \tilde{g}\|_{L^{p_0}(B^+(0,\rho))} &\leq r^2 \|\nabla ^2 \phi (x-\Phi_k(Y+4r \cdot /17))\|_{L^{p_0}(B^+(0,2))} \\
    &\leq C(\Omega) r^{2-\frac{d}{p_0}} \|\nabla^2 \phi (x - \cdot)\|_{L^{p_0}(B(y,\frac{33r}{34}))},
\end{align*}
where we used that
\begin{equation}
    \label{eq.inclusions}
    \Phi_k(B^+(Y,8r/17)) \subset B(a,16r/17) \subset B(y,33r/34) \cap \Omega 
\end{equation}
and the properties of $\Phi_k$ as well as a scaling change of variable. 
Now since $|x-z| \ge \frac{r}{34}$ for any $z \in B(y,33r/34)$ and that $|\nabla ^2 \phi (x -z)| \leq  \frac{C(d)}{|x-z|^{d}}$, we finally obtain that 
\[
    \|\nabla\tilde{g}\|_{L^{p_0}(B^+(0,\rho))} \leq \frac{C(\Omega)}{r^{d-2}},
\]
and similarly the same estimate holds for
$\|\tilde{g}\|_{L^{p_0}(B^+(0,\rho))}$, so that ultimately we get 
\[
    \|\tilde{g}\|_{W^{1-\frac{1}{p_0},p_0}(B^+(0,\rho))} \leq \frac{C(\Omega)}{r^{d-2}}. 
\]
Importantly our computations do not depend on the value of $p_0$. For this reason we are in a position to iterate~\eqref{eq.firstApplicationBoundaryReg} sufficiently many time just as we did in \textbf{Subcase~1.1.} and obtain
\[
    \|\tilde{\theta}_x\|_{W^{2,p_0}(B^+(0,1))} \leq C(\Omega) \left(\|\tilde{\theta}_x\|_{L^{p_n}(B^+(0,2)))} + \frac{1}{r^{d-2}} + 1 \right),
\]
where $n=n(d)$ is chosen so that $p_n \in (1,\frac{d}{d-2})$ (note that we also need to chose $\rho$ such that $\rho ^n \leq 2$). Using the Sobolev embedding $\dot{W}^{1,\frac{d}{d-1}- \varepsilon}(B^+(0,2))  \hookrightarrow L^{p_n}(B^+(0,2))$ and the embedding $L^{\frac{d}{d-1},\infty}(B^+(0,2)) \hookrightarrow L^{\frac{d}{d-1}-\varepsilon}(B^+(0,2))$, one finally obtains that
\[
    \|\tilde{\theta}_x\|_{W^{2,p_0}(B^+(0,1))} \leq C(\Omega) \left(\|\nabla \tilde{\theta}_x\|_{L^{\frac{d}{d-1},\infty}(B^+(0,2)))} + \frac{1}{r^{d-2}} + 1 \right).
\]
Going back to the original variables and undoing the scaling we get 
\[
    \|\nabla \tilde{\theta}_x\|_{L^{\frac{d}{d-1},\infty}(B^+(0,2)))} \leq \frac{C(\Omega)}{r^{d-2}} \|\nabla \psi_x\|_{L^{\frac{d}{d-1},\infty}(\Omega)}, 
\]
and thanks to the $L^{\frac{d}{d-1},\infty}$ bound on $\psi_x$ given by~\eqref{eq.weakBoundPsi} we arrive at 
\begin{equation}
    \label{eq.almostFinalBoundary}
    \|\tilde{\theta}_x\|_{W^{2,p_0}(B^+(0,1))} \leq \frac{C(\Omega)}{r^{d-2}}. 
\end{equation}
To conclude, observe that 
\begin{align*}
    \|\psi_x\|_{L^{\infty}(B(y,r/34)\cap\Omega)} &\leq \|\psi_x\|_{L^{\infty}(B(a,r/17)\cap\Omega)}\\
    &\leq C(\Omega) \|\theta_x\|_{L^{\infty}(B^+(Y,2r/17))} \\
    &\leq C(\Omega) \|\tilde{\theta}_x\|_{L^{\infty}(B^+(0,1/2))},
\end{align*}
which combined with the Sobolev embedding $W^{2,p_0} \hookrightarrow L^{\infty}$ and~\eqref{eq.almostFinalBoundary} provides us with the desired estimate
\[
    \|\psi_x\|_{L^{\infty}(B(y,r/34)\cap\Omega)}\leq \frac{C(\Omega)}{r^{d-2}}. 
\]

\medskip

\noindent
\textbf{Subcase 2.2 $|\beta| \geq 1$.} 
An application of~\eqref{eq.schauderBoundary} to $\tilde{\theta}_x$ yields
\[
    \|\tilde{\theta}_x\|_{C^{2,\alpha}(B^+(0,1)}\leq C(\Omega) \left(\|\tilde{\theta_x}\|_{L^{\infty}(B^+(0,2))} + \|Cr^2\|_{C^{\alpha}(B^+(0,2))} + \|g\|_{C^{1,\alpha}(\Gamma(0,2))}\right). 
\]
Note that $\|Cr^2\|_{C^{\alpha}(B^+(0,2))}\leq Cr^2$. Then, using that 
\[
    \Phi(B^+(Y,8r/17)) \subset B(a,16r/17) \subset B(y,33r/34) \cap \Omega 
\] 
and also that $\Phi_k$ is a $C^{2,\alpha}$ diffeomorphism, it follows that  
\begin{equation}
    \label{eq.order0r}
    \|\tilde{\theta}_x\|_{L^{\infty}(B^+(0,2))} = \|\theta_x\|_{L^{\infty}(B^+(Y,8r/17))} \leq C(\Omega) \|\psi_x\|_{L^{\infty}(B(y,33r/34))} \leq \frac{C(\Omega)}{r^{d-2}}, 
\end{equation}
where in the last step we have used the zero-order estimate. 
Similarly, to estimate $\tilde g$,
by rescaling and using that $\Phi_k$ is a $C^{2,\alpha}$ diffeomorphism, one obtains
\begin{align*}
    [\tilde g]_{C^{1,\alpha}(\Gamma^+(0,2))} & = Cr^{2+\alpha}[\partial_n \phi (x - \Phi_k(\cdot))]_{C^{1,\alpha}(\Gamma(Y,8r/17))}\\
    &\leq  Cr^{2+\alpha} \|\partial_n \phi (x - \cdot) \|_{C^{1,\alpha}(B(y,33r/34)\cap \partial \Omega)} \\
    &\leq C r^{2+\alpha}\|\phi (x - \cdot) \|_{C^{2,\alpha}(B(y,33r/34)\cap \partial \Omega)}.    
\end{align*}
Now, we observe that 
\begin{align*}
   \|\phi (x - \cdot) \|_{C^{2,\alpha}(B(y,33r/34)\cap\partial\Omega)} &\leq \|\phi (x - \cdot) \|_{L^{\infty}(B(y,33r/34)\cap\partial\Omega)} \\
   &+ [\phi (x - \cdot)]_{C^{2,\alpha}(B(y,33r/34)\cap\partial\Omega)} \\
   & \leq \frac{C(\Omega)}{r^{d-2}} + [\phi (x - \cdot)]_{C^{2,\alpha}(B(y,33r/34)\cap\partial\Omega)}.
\end{align*}
Moreover, we have
\[
    |\partial_j\partial_m \phi(x-z_1) - \partial_j\partial_m\phi(x-z_2)| \leq \frac{C(d)}{r^{d+\alpha}} |z_1-z_2|^{\alpha},
\]
for all $z_1,z_2 \in B(y,33r/34)$ and all $j, m \in \{1, \dots, d\}$, which is obtained by explicit computations on $\phi(x-z)=\frac{C(d)}{|x-z|^{d-2}}$. Gathering all these estimates, it follows that
\begin{equation}
    \label{eq.order2r}
        \|\tilde{\theta}_x\|_{C^{2,\alpha}(B^+(0,1))} \leq \frac{C}{r^{d-2}}. 
\end{equation}
Interpolating~\eqref{eq.order2r} with~\eqref{eq.order0r} exploiting~\eqref{eq.interpolation2} on a smooth domain $U^+(0,1)$ such that $B^+(0,1/2) \subset U^+(0,1)\subset B^+(0,1)$, we obtain
\[
    [\tilde{\theta}_x]_{C^{|\beta|}(B^+(0,1/2))} \leq [\tilde{\theta}_x]_{C^{|\beta|}(U^+(0,1))} \leq \frac{C(\Omega)}{r^{d-2}}. 
\]
Also, note that $\|\tilde{\theta}_x\|_{L^{\infty}(B^+(0,1/2))} \leq \frac{C(\Omega)}{r^{d-2}}$, so that we can finally write 
\begin{align*}
   [\psi_x]_{C^{|\beta|}(B(y,r/34)\cap\Omega)}&\leq [\psi_x]_{C^{|\beta|}(B(a,r/17)\cap\Omega)} \leq C(\Omega) \|\theta_x\|_{C^{|\beta|}(B^+(Y,2r/17))} \\
   &\leq \frac{C(\Omega)}{r^{|\beta|}}[\tilde{\theta}_x]_{C^{|\beta|}(B^+(0,1/2))} + C(\Omega)\|\tilde{\theta}_x\|_{L^{\infty}(B^+(0,1/2))}\\
   &\leq \frac{C(\Omega)}{r^{d-2+|\beta|}},
\end{align*}
which is sufficient to conclude the proof. 

\medskip

\noindent 
\textit{Proof of~\eqref{item:neu.der2} for $r\le r_0/2$}.
Let us denote $|x_1-x_2|=h$ and $r :=|\bar x - y|$. Let $g_{x_1,x_2} := \psi(x_1,\cdot) - \psi(x_2, \cdot)$. Since~\eqref{eq.pointwiseDifference} is true for $\phi$ in place of $G$ (by direct computations), the proof of~\eqref{eq.pointwiseDifference} directly follows from the inequality 
\begin{equation}
    \label{eq.requiredDiff}
    [g_{x_1,x_2}]_{C^{2}(B(y,r/68)\cap\Omega)} \leq \frac{h}{r^{d+1}}, 
\end{equation}
for some global constant $c>0$. As in the proof of~\eqref{item:neu.der1}, we distinguish between the case where $y$ is far from the boundary or close to it. 

We can assume that $h \leq \frac{r}{34}$.
Indeed, if $h>\frac{r}{34}$, then we can just apply the triangle inequality and~\eqref{item:neu.der1} to obtain 
\begin{align*}
    |\partial_i \partial_j g(x_1,y) - \partial_i\partial_j g(x_2,y)| &\leq |\partial_i \partial_j g(x_1,y)| + |\partial_i\partial_j g(x_2,y)| \\
   & \leq C(\Omega)\left(\frac{1}{|x_1 - y|^d} + \frac{1}{|x_2 - y|^d} \right) \\
   &\leq \frac{C(\Omega)}{r^d}\leq C(\Omega) \frac{h}{r^{d+1}}.
\end{align*}
As in the proof of~\eqref{item:neu.der1}, we  distinguish two cases.

\medskip

\noindent
\textbf{Case 1 $B(y,r/34) \subset \Omega$.} 
The function $g_{x_1,x_2}$ satisfies the equation 
\[  
    \Delta_z g_{x_1,x_2} = 0 \quad \text{in}\ B(y,r/34),
\]
therefore the function $\tilde{g}(z):=g_{x_1,x_2}(y+rz/68)$ satisfies 
\[
    \Delta \tilde{g}=0 \quad\text{in}\ B(0,2),    
\]
so that, using the interior Schauder estimates~\eqref{eq.schauderInterior} on $\tilde{g}$, one has 
\[
    \|\tilde{g}\|_{C^{2,\alpha}(B(0,1))} \leq C(\Omega)\|\tilde{g}\|_{L^{\infty}(B(0,2))}.    
\]
Assuming the estimate 
\begin{equation}
    \label{eq.psi1}
    \|g_{x_1,x_2}\|_{L^{\infty}(B(y,r/34))}\leq C(\Omega)\frac{h}{r^{d-1}},
\end{equation}
it follows that 
\[
    \|\tilde{g}\|_{L^{\infty}(B(0,2))} \leq C(\Omega)\frac{h}{r^{d-1}},  
\]
so that 
\begin{equation}
    \label{eq.psi11}
    \|\tilde{g}\|_{C^{2,\alpha}(B(0,1))} \leq \frac{C(\Omega)h}{r^{d-1}}.    
\end{equation}
Interpolating~\eqref{eq.psi1} and~\eqref{eq.psi11} using~\eqref{eq.interpolation2} and then undoing the scaling, one obtains the estimate~\eqref{eq.requiredDiff}. Therefore the proof of~\eqref{eq.pointwiseDifference} boils down to the proof of~\eqref{eq.psi1}.

\medskip 

\noindent
\textbf{Case 2 $B(y,r/34) \cap \partial\Omega \neq \varnothing$.} Let us fix $1\leq k\leq K$ such that $y \in B(c_k,r_0)$. The argument follows the same line as in~\eqref{item:neu.der1}, so that we should only sketch the argument below. The equation satisfied by $g_{x_1,x_2}$ on $B(y,r/2)\cap \Omega$ is 
\[
\left\{
\begin{array}{rclll}
    \Delta_z g_{x_1,x_2} & = & 0 &\text{ in } &B(y,r/2)\cap\Omega\\[2mm]
    \partial_{n}g_{x_1,x_2} & = & \partial_n \phi(x_2-\cdot)-\partial_n\phi(x_1 -\cdot) &\text{ in } &B(y,r/2)\cap\partial\Omega.  
\end{array}
\right.
\]
Let $a\in B(y,r/34)\cap\partial \Omega$ and set $Y=\Phi_k^{-1}(a)$. The function $\theta:=g_{x_1,x_2}(\Phi_k(Y + 4r \cdot /17))$ satisfies the equation 
\[
\left\{
\begin{array}{rclll}
    \diver  (A\nabla \theta) & = & 0 &\text{ in } & B^+(0,2)\\[2mm]
    b\cdot\nabla \theta & = & G &\text{ in } &\Gamma(0,2),
\end{array}
\right.
\]
where  $A$ is some uniformly elliptic matrix and $b$ a vector both with $C^{1,\alpha}$ norms independent of $r$, and 
\[
G=r\left(\partial_n \phi(x_2-\Phi_k(Y+4r\cdot /17))-\partial_n\phi(x_1 -\Phi_k(Y+4r\cdot /17))\right).
\]
In particular, an application of~\eqref{eq.schauderBoundary} yields 
\begin{equation*}
    \|\theta\|_{C^{2,\alpha}(B^+(0,1))} \leq C(\Omega)\left(\|\theta\|_{L^{\infty}(B^+(0,2))} + \|G\|_{C^{1,\alpha}(\Gamma(0,2))}\right).
\end{equation*}
Observe that, by rescaling and using that $\Phi_k$ is a $C^{2,\alpha}$ diffeomorphism, one has 
\begin{align*}
    [G]_{C^{1,\alpha}(\Gamma(0,2))} 
    &= r^{2+\alpha}[\partial_n \phi(x_2-\Phi_k(\cdot))-\partial_n\phi(x_1 -\Phi_k(\cdot))]_{C^{1,\alpha}(B^+(Y,8r/17))}\\
  &  \leq C(\Omega) r^{2+\alpha}\|\partial_n \phi(x_2-\cdot)-\partial_n\phi(x_1 -r\cdot)\|_{C^{1,\alpha}(B(a,16r/17)\cap\partial\Omega)} \\
    &\leq C(\Omega)r^{2+\alpha}\|\partial_n \phi(x_2-\cdot)-\partial_n\phi(x_1 -r\cdot)\|_{C^{1,\alpha}(B(y,33r/34)\cap\partial\Omega)}. 
\end{align*}
By explicit computations on $\phi$ and by recalling that $h \leq \frac{r}{34}$, one has 
\[
   [\partial_n \phi(x_2-\cdot)-\partial_n\phi(x_1 -\cdot)]_{C^{1,\alpha}(B(y,33r/34)\cap\partial\Omega)} \leq C(\Omega)\frac{h}{r^{d+1+\alpha}},  
\]
so that 
\[
    [G]_{C^{1,\alpha}(\Gamma(0,2))} \leq C(\Omega) \frac{h}{r^{d-1}}.
\]
Similarly, one can estimate  
\[
    \|G\|_{L^{\infty}(\Gamma(0,2))} \leq C(\Omega)\frac{h}{r^{d-1}},
\]
so that we finally have 
\[
    \|G\|_{C^{1,\alpha}(\Gamma(0,2))} \leq C(\Omega)\frac{h}{r^{d-1}}. 
\]
Similarly, we have 
\[
    \|\theta\|_{L^{\infty}(B^+(0,1))} \leq C(\Omega) \|g_{x_1,x_2}\|_{L^{\infty}(B(y,33r/34)\cap\Omega)}.  
\]
Assuming the validity of the bound 
\begin{equation}
    \label{eq.psi2}
    \|g_{x_1,x_2}\|_{L^{\infty}}(B(y,33r/34)\cap\Omega) \leq C(\Omega)\frac{h}{r^{d-1}},  
\end{equation}
the above inequalities lead to the estimate 
\begin{equation}
    \label{eq.psi4}
    \|\theta\|_{C^{2,\alpha}(B^+(0,1))}\leq C(\Omega) \frac{h}{r^{d-1}}. 
\end{equation}
Interpolating the rescaled version of~\eqref{eq.psi2} with~\eqref{eq.psi4} using \cref{interpolation.lemma}, and following the same arguments as in the proof of~\eqref{item:neu.der1}, one obtains~\eqref{eq.requiredDiff}, provided that~\eqref{eq.psi2} holds. 

\medskip
\noindent\textbf{Proof of \eqref{eq.psi1} and \eqref{eq.psi2}.} 
We need to prove that the function $\psi(x,\cdot)=\psi_x$ satisfies
\begin{equation}
    \label{eq.psi3}
    |\psi(x_1,z) - \psi(x_2,z)| \leq C(\Omega) \frac{h}{r^{d-1}} \text{ for all } z \in \Omega \cap B(y,33r/34). 
\end{equation}
First, let us observe that, by the triangle inequality, one has
\[
    |\psi(x_1,z) - \psi(x_2,z)| \leq |\psi(x_1,z)| + |\psi(x_1,z)| \leq C(\Omega) \frac{1}{r^{d-2}} \leq C(\Omega) \frac{h}{r^{d-1}},
\]
as soon as $r \leq C(\Omega) h$. Therefore, we can assume that $r \geq 3 h$. Note that, since $\psi(x_1,z) - \psi(x_2,z)$ is a difference, we can assume that $x_1$ and $x_2$ are either both in the same ball of radius $r_0$ of the chosen covering of $\overline \Omega$, or in two different but neighboring balls. 
Hence, we can assume that $x_1, x_2 \in B(c_k,r_0)$ for some $k$. 
We first explain how to construct a path $\gamma \colon [0,1] \to \Omega\cap B(\bar{x},h)$ joining $x_1$ to $x_2$. Recall that $\Phi_k\colon U_k \to B(c_k,16r_0)\cap \Omega$ is a diffeomorphism such that $\Phi_k^{-1}(B(\bar{x},h))$ is contained in the convex set $B^+(0,8r_0) \subset U_k$. Therefore, we can set $X_j=\Phi_k^{-1}(x_j)$ for $j=1, 2$ and define the path 
\[
\gamma (t) = \Phi_k((1-t)X_1+tX_2),
\]
$t\in[0,1]$, which has length less than $2|x_1-x_2|$ since $\Phi_k$ is a diffeomorphism, see \cref{fig.2}. Also, again because $\Phi_k$ is a diffeomorphism, we have that $|x'-y| \geq r/3$ for all $x'\in \gamma([0,1])$, being $r \geq 3h$ according to our initial assumption.    
\begin{figure}
    \centering
    \includegraphics[width=\textwidth]{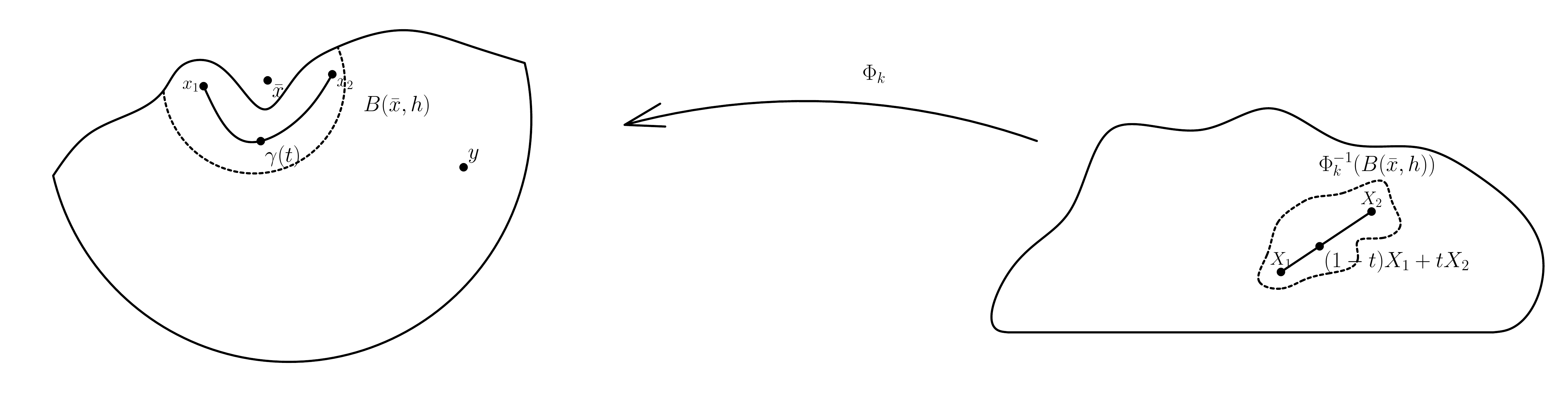}
    \caption{Construction of the path $\gamma(t)$.}
    \label{fig.2}
\end{figure}
We can now start the proof by observing that, by the Mean Value Theorem, 
\[
    \psi(x_1,z)-\psi(x_2,z)= \int_{0}^{1}\nabla_x \psi(\gamma (t),z) \cdot\gamma'(t)\,dt, 
\]
and thus
\[
    |\psi(x_1,z)-\psi(x_2,z)| \leq C(\Omega)|x_1-x_2| \|\nabla_x G(\cdot, z)\|_{L^{\infty}(B(\bar x, h))}.
\]
Because of the symmetry $G(x,z)=G(z,x)$ we see that $\nabla _x G(x,z)=\nabla_y G(z,x)$, so that by~\eqref{item:neu.der1} we have the uniform bound
\[
    |\nabla_x G(x',z)| \leq \frac{C(\Omega)}{|x'-z|^{d-1}} \leq \frac{C(\Omega)}{r^{d-1}} 
\]
for all $z\in B(y,33r/34)$ and $x'\in\gamma([0,1])$,
which finally gives~\eqref{eq.psi3}.

\medskip

To conclude the proof, we are thus left to prove~\eqref{item:neu.der1} and~\eqref{item:neu.der2} for $r \geq r_0/2$.
In this case, one can write all the estimates in the proofs of~\eqref{item:neu.der1} and of~\eqref{item:neu.der2} with $r_0$ instead of $r$ and end up with bounds in terms of negative powers of $r_0$, which are then bounded by negative powers of $r$ up to enlarging the constants.
\end{proof}

\begin{remark}
One may wonder if the estimates on $\psi_x=\psi(x,\cdot)$ given in the above proof are optimal. 
In fact, the argument presented above should be interpreted as a scaling obstruction for a larger growth than $|x-\,\cdot\,|^{-d+2-|\beta|}$ for $\psi_x$. Since this bound has the same order of the one that can be computed for the Newtonian potential, this is enough to conclude. 
\end{remark}
 
\begin{bibdiv}
\begin{biblist}

\bib{Am20}{article}{
   author={Amrouche, C.},
   author={Conca, C.},
   author={Ghosh, A.},
   author={Ghosh, T.},
   title={Uniform $W^{1,p}$ estimates for an elliptic operator with Robin boundary condition in a $\mathcal{C}^1$ domain},
   journal={Calc. Var. Partial Differential Equations},
   volume={59},
   date={2020},
   number={2},
   pages={Paper No. 71, 25},
}

\bib{BT18}{article}{
   author={Bardos, Claude},
   author={Titi, Edriss S.},
   title={Onsager's conjecture for the incompressible Euler equations in
   bounded domains},
   journal={Arch. Ration. Mech. Anal.},
   volume={228},
   date={2018},
   number={1},
   pages={197--207},
}

\bib{BT21}{article}{
   author={Bardos, Claude},
   author={Titi, Edriss S.},
   title={$C^{0,\alpha}$  boundary regularity for the pressure in weak solutions of the $2d$ Euler equations},
   journal={Phil. Trans. R. Soc. A.}, 
   volume={380},
   date={2022},
}

\bib{BTW19}{article}{
   author={Bardos, Claude},
   author={Titi, Edriss S.},
   author={Wiedemann, Emil},
   title={Onsager's conjecture with physical boundaries and an application
   to the vanishing viscosity limit},
   journal={Comm. Math. Phys.},
   volume={370},
   date={2019},
   number={1},
   pages={291--310},
}

\bib{BL}{book}{
   author={Bergh, J\"{o}ran},
   author={L\"{o}fstr\"{o}m, J\"{o}rgen},
   title={Interpolation spaces. An introduction},
   note={Grundlehren der Mathematischen Wissenschaften, No. 223},
   publisher={Springer-Verlag, Berlin-New York},
   date={1976},
   pages={x+207},
}

\bib{BP19}{article}{
   author={Berselli, Luigi C.},
   author={Longo, Placido},
   title={Classical solutions for the system ${\text{curl}\, v = g}$, with
   vanishing Dirichlet boundary conditions},
   journal={Discrete Contin. Dyn. Syst. Ser. S},
   volume={12},
   date={2019},
   number={2},
   pages={215--229},
}

\bib{BP20}{article}{
   author={Berselli, Luigi C.},
   author={Longo, Placido},
   title={Classical solutions of the divergence equation with Dini
   continuous data},
   journal={J. Math. Fluid Mech.},
   volume={22},
   date={2020},
   number={2},
   pages={Paper No. 26, 20},
}

\bib{BDLSV2019}{article}{
   author={Buckmaster, Tristan},
   author={de Lellis, Camillo},
   author={Sz\'{e}kelyhidi, L\'{a}szl\'{o}, Jr.},
   author={Vicol, Vlad},
   title={Onsager's conjecture for admissible weak solutions},
   journal={Comm. Pure Appl. Math.},
   volume={72},
   date={2019},
   number={2},
   pages={229--274},
}

\bib{CCFS08}{article}{
   author={Cheskidov, A.},
   author={Constantin, P.},
   author={Friedlander, S.},
   author={Shvydkoy, R.},
   title={Energy conservation and Onsager's conjecture for the Euler
   equations},
   journal={Nonlinearity},
   volume={21},
   date={2008},
   number={6},
   pages={1233--1252},
}
	
\bib{CD18}{article}{
   author={Colombo, Maria},
   author={De Rosa, Luigi},
   title={Regularity in time of H\"{o}lder solutions of Euler and
   hypodissipative Navier-Stokes equations},
   journal={SIAM J. Math. Anal.},
   volume={52},
   date={2020},
   number={1},
   pages={221--238},
}

\bib{CDF20}{article}{
   author={Colombo, Maria},
   author={De Rosa, Luigi},
   author={Forcella, Luigi},
   title={Regularity results for rough solutions of the incompressible Euler
   equations via interpolation methods},
   journal={Nonlinearity},
   volume={33},
   date={2020},
   number={9},
   pages={4818--4836},
}

\bib{C2014}{article}{
   author={Constantin, P.},
   title={Local formulas for hydrodynamic pressure and their applications},
   language={Russian, with Russian summary},
   journal={Uspekhi Mat. Nauk},
   volume={69},
   date={2014},
   number={3(417)},
   pages={3--26},
   translation={
      journal={Russian Math. Surveys},
      volume={69},
      date={2014},
      number={3},
      pages={395--418},
   },
}
		
\bib{CET94}{article}{
   author={Constantin, Peter},
   author={E, Weinan},
   author={Titi, Edriss S.},
   title={Onsager's conjecture on the energy conservation for solutions of
   Euler's equation},
   journal={Comm. Math. Phys.},
   volume={165},
   date={1994},
   number={1},
   pages={207--209},
}

\bib{DS2013}{article}{
   author={De Lellis, Camillo},
   author={Sz\'{e}kelyhidi, L\'{a}szl\'{o}, Jr.},
   title={Dissipative continuous Euler flows},
   journal={Invent. Math.},
   volume={193},
   date={2013},
   number={2},
   pages={377--407},
}
		
\bib{DS2014}{article}{
   author={De Lellis,Camillo},
   author={Sz\'{e}kelyhidi, L\'{a}szl\'{o}, Jr.},
   title={Dissipative Euler flows and Onsager's conjecture},
   journal={J. Eur. Math. Soc. (JEMS)},
   volume={16},
   date={2014},
   number={7},
   pages={1467--1505},
}

\bib{DH21}{article}{
   author={De Rosa, Luigi},
   author={Haffter, Silja},
   title={Dimension of the singular set of wild H\"older solutions of the incompressible Euler equations},
   status={preprint},
   date={2021},
   eprint={https://arxiv.org/abs/2102.06085},
}

\bib{DT19}{article}{
   author={De Rosa, Luigi},
   author={Tione, Riccardo},
   title={Sharp energy regularity and typicality results for H\"{o}lder
   solutions of incompressible Euler equations},
   journal={Anal. PDE},
   volume={15},
   date={2022},
   number={2},
   pages={405--428},
}

\bib{DB10}{book}{
   author={DiBenedetto, Emmanuele},
   title={Partial differential equations},
   publisher={Birkh\"{a}user Boston, Inc., Boston, MA},
   date={1995},
}

\bib{DM}{article}{
   author={Dolzmann, G.},
   author={M\"{u}ller, S.},
   title={Estimates for Green's matrices of elliptic systems by $L^p$
   theory},
   journal={Manuscripta Math.},
   volume={88},
   date={1995},
   number={2},
   pages={261--273},
}

\bib{Ey94}{article}{
   author={Eyink, Gregory L.},
   title={Energy dissipation without viscosity in ideal hydrodynamics. I.
   Fourier analysis and local energy transfer},
   journal={Phys. D},
   volume={78},
   date={1994},
   number={3-4},
   pages={222--240},
}

\bib{F}{book}{
   author={Frisch, Uriel},
   title={Turbulence},
   note={The legacy of A. N. Kolmogorov},
   publisher={Cambridge University Press, Cambridge},
   date={1995},
}

\bib{GT}{book}{
   author={Gilbarg, David},
   author={Trudinger, Neil S.},
   title={Elliptic partial differential equations of second order},
   series={Classics in Mathematics},
   note={Reprint of the 1998 edition},
   publisher={Springer-Verlag, Berlin},
   date={2001},
}

\bib{Is2013}{article}{
   author={Isett, Philip},
   title={Regularity in time along the coarse scale flow for the incompressible Euler equations},
   status={preprint},
   eprint={https://arxiv.org/abs/1307.0565},
   date={2013},
}

\bib{Is2018}{article}{
   author={Isett, Philip},
   title={A proof of Onsager's conjecture},
   journal={Ann. of Math. (2)},
   volume={188},
   date={2018},
   number={3},
   pages={871--963},
}

\bib{KMPT00}{article}{
   author={Kato, Tosio},
   author={Mitrea, Marius},
   author={Ponce, Gustavo},
   author={Taylor, Michael},
   title={Extension and representation of divergence-free vector fields on
   bounded domains},
   journal={Math. Res. Lett.},
   volume={7},
   date={2000},
   number={5-6},
   pages={643--650},
}

\bib{K41}{article}{
   author={Kolmogoroff, A.},
   title={The local structure of turbulence in incompressible viscous fluid
   for very large Reynold's numbers},
   journal={C. R. (Doklady) Acad. Sci. URSS (N.S.)},
   volume={30},
   date={1941},
}

\bib{L13}{book}{
   author={Lieberman, Gary M.},
   title={Oblique derivative problems for elliptic equations},
   publisher={World Scientific Publishing Co. Pte. Ltd., Hackensack, NJ},
   date={2013},
}

\bib{L}{book}{
   author={Lunardi, Alessandra},
   title={Interpolation theory},
   series={Appunti. Scuola Normale Superiore di Pisa (Nuova Serie). [Lecture
   Notes. Scuola Normale Superiore di Pisa (New Series)]},
   edition={2},
   publisher={Edizioni della Normale, Pisa},
   date={2009},
   pages={xiv+191},
   isbn={978-88-7642-342-0},
   isbn={88-7642-342-0},
}

\bib{N14}{article}{
   author={Nardi, Giacomo},
   title={Schauder estimate for solutions of Poisson's equation with Neumann
   boundary condition},
   journal={Enseign. Math.},
   volume={60},
   date={2014},
   number={3-4},
   pages={421--435},
}

\bib{Ons49}{article}{
   author={Onsager, L.},
   title={Statistical hydrodynamics},
   journal={Nuovo Cimento (9)},
   volume={6},
   date={1949},
   number={Supplemento, 2 (Convegno Internazionale di Meccanica Statistica)},
   pages={279--287},
}
		
\bib{RRS18}{article}{
   author={Robinson, James C.},
   author={Rodrigo, Jos\'e L.},
   author={Skipper, Jack W. D.},
   title={Energy conservation for the Euler equations on $\mathbb{T}^2\times\mathbb{R}_+$ for weak solutions defined without reference to the pressure},
   journal={Asymptot. Anal.},
   volume={110},
   date={2018},
   number={3-4},
   pages={185--202},
}

\bib{RRS2018}{article}{
   author={Robinson, James C.},
   author={Rodrigo, Jos\'{e} L.},
   author={Skipper, Jack W. D.},
   title={Energy conservation in the 3D Euler equation on $\mathbb
   T^2\times\mathbb R_+$},
   conference={
      title={Partial differential equations in fluid mechanics},
   },
   book={
      series={London Math. Soc. Lecture Note Ser.},
      volume={452},
      publisher={Cambridge Univ. Press, Cambridge},
   },
   date={2018},
   pages={224--251},
}

\bib{S16}{book}{
   author={Salsa, Sandro},
   title={Partial differential equations in action},
   series={Unitext},
   volume={99},
   edition={3},
   note={From modelling to theory;
   La Matematica per il 3+2},
   publisher={Springer, [Cham]},
   date={2016},
}

\bib{SILV}{article}{
   author={Silvestre, Luis},
title={A non obvious estimate for the pressure},
 note={Unpublished note},
 eprint={http://math.uchicago.edu/~luis/preprints/pressureestimate.pdf},
year={2011},
}

\bib{V22}{article}{
   author={Vita, Stefano},
   title={Boundary regularity estimates in H\"{o}lder spaces with variable
   exponent},
   journal={Calc. Var. Partial Differential Equations},
   volume={61},
   date={2022},
   number={5},
   pages={Paper No. 166, 31},
}

\end{biblist}
\end{bibdiv}

\end{document}